\newtheorem{thm}{Theorem}[section]
\newtheorem{cor}[thm]{Corollary}
\newtheorem{lem}[thm]{Lemma}
\newtheorem{prop}[thm]{Proposition}
\theoremstyle{definition}
\newtheorem{defn}[thm]{Definition}
\newtheorem{rem}[thm]{Remark}
\newtheorem{prob}{Problem}[section]
\newtheorem{ex}[thm]{Example}
\newcommand{\PV}{\mathbb{P}}
\newcommand{\coker}{\mathrm{coker}}
\numberwithin{equation}{section} 
\begin{document}

\title[]{Bott--Chern hypercohomology and bimeromorphic invariants}

\author[S. Yang]{Song Yang}
\address{Center for Applied Mathematics, Tianjin University, Tianjin 300072, P. R.  China}%
\email{syangmath@tju.edu.cn}%

\author[X. Yang]{Xiangdong Yang}
\address{Department of mathematics, Lanzhou University, Lanzhou 730000, P.R. China}
\email{yangxd@lzu.edu.cn}

\date{\today}

\begin{abstract}
The aim of this article is to study the geometry of Bott--Chern hypercohomology from the bimeromorphic point of view.
We construct some new bimeromorphic invariants involving the cohomology for the sheaf of germs of pluriharmonic functions, the truncated holomorphic de Rham cohomology, and the de Rham cohomology.
To define these invariants, using a sheaf-theoretic approach, we establish a blow-up formula together with a canonical morphism for the Bott--Chern hypercohomology.
In particular, we compute the invariants of some compact complex threefolds, such as Iwasawa manifolds and quintic threefolds.
\end{abstract}

\subjclass[2010]{32Q55; 32C35}

\keywords{Sheaf cohomology, Bott--Chern hypercohomology, blow-up, bimeromorphic invariant.}

\maketitle
\setcounter{tocdepth}{1}
\tableofcontents


\section{Introduction}
\subsection{Background}
In complex geometry, one of fundamental problems is to classify compact complex manifolds up to bimeromorphic equivalence and to find nice models in every equivalence class (cf. \cite{CP99}).
In general, it is impossible to describe the bimeromorphic equivalence classes completely for the lacking of bimeromorphic invariants.
For a meaningful classification, the first thing is to determine bimeromorphic invariants of compact complex manifolds as many as possible, or to understand which geometric property of a compact complex manifold admits the bimeromorphic invariance.
Among these, the Kodaira dimension plays an important role in the bimeromorphic classification of compact complex manifolds.
Especially, in the sense of minimal model program, compact complex surfaces can be divided into ten classes by the Kodaira dimension called the Enriques--Kodaira classification (cf. \cite[\S\,VI]{BHP04}).
According to the celebrated weak factorization theorem by Abramovich--Karu--Matsuki--W{\l}odarczyk \cite{AKMW02}, every bimeromorphic map between compact complex manifolds can be factored into a finite sequence of blow-ups and blow-downs with smooth centers.
For this reason, to show that an invariant of compact complex manifolds is a bimeromorphic invariant, it is sufficient to verify the stability of such an invariant under blow-ups along smooth centers.

Let $X$ be a compact complex manifold.
Then we have a natural double complex, the Dolbeault complex $(\mathcal{A}^{\bullet, \bullet}(X),\partial,\bar{\partial})$.
Arising from this double complex, we can construct several cohomological invariants of $X$, such as Dolbeault cohomology \cite{Dol53}, $\partial$-cohomology, Bott--Chern cohomology \cite{BC65}, and Aeppli cohomology \cite{Aep65}.
Recall that the $(p,q)$-th Hodge (resp. Bott--Chern) number $h^{p,q}_{\bar{\partial}}(X)$ (resp. $h^{p,q}_{BC}(X)$) is defined to be the complex dimension of the $(p,q)$-th Dolbeault (resp. Bott--Chern) cohomology group.
In general, Hodge and Bott--Chern numbers are not stable under blow-ups for the cohomological contributions of blowing up centers.
However, the $(p,0)$ and $(0,p)$-th Hodge and Bott--Chern numbers are invariants under blow-ups (cf. \cite{Hir64,YY20}).
Most recently, Stelzig \cite{Ste21c} showed that for all compact complex manifolds a $\mathbb{Z}$-linear combination or congruence of Hodge and Chern numbers is a bimermorphic invariant, if and only if it is a linear combinations or congruences of the $(p,0)$ or $(0,p)$-Hodge numbers.

Assume that the $\partial\bar{\partial}$-lemma holds on $X$.
By a result in \cite{DGMS}, the Bott--Chern cohomology canonically coincides with the Dolbeault cohomology.
Each compact K\"{a}hler manifold satisfies the $\partial\bar{\partial}$-lemma,
and there exist many interesting classes of compact non-K\"{a}hlerian complex manifolds satisfying the $\partial\bar{\partial}$-lemma, for examples, Moishezon manifolds and compact complex manifolds in the Fujiki of class $\mathscr{C}$, see \cite{DGMS}.
The Bott--Chern cohomology is an important holomorphic invariant in non-K\"{a}hler complex geometry.
Using Bott--Chern cohomology, Angella--Tomassini \cite{AT13} proved a cohomological characterization theorem of the $\partial\bar{\partial}$-lemma.
Bismut \cite{Bis13} presented a Riemann--Roch--Grothendieck theorem taking values in Bott--Chern cohomology which generalizes the classical Riemann--Roch--Grothendieck theorem to complex Hermitian geometry.
More recently, for an arbitrary coherent sheaf on a compact complex manifold, Wu \cite{Wu20} construct the Chern classes valued in rational Bott--Chern cohomology and established a Riemann--Roch--Grothendieck formula.

From the viewpoint of bimeromorphic geometry, it is natural to ask whether the $\partial\bar{\partial}$-lemma is stable under bimeromorphic maps or not.
In \cite{YY20}, using a sheaf-theoretic approach, we proved a blow-up formulae for Bott--Chern cohomology and showed that the Non-K\"{a}hlerness degrees defined by Angella--Tomassini \cite{AT13} are bimeromorphic invariants for compact complex threefolds, and therefore the $\partial\bar{\partial}$-lemma is stable under bimeromorphic maps.
Recently, extensive works have been done on the topics of blow-up formulae and the $\partial\bar{\partial}$-lemma.
We refer the readers to \cite{ASTT20}, \cite{AT17}, \cite{CY19}, \cite{Ste21}, \cite{Men1}, \cite{Men2}, \cite{Men3}, \cite{Men4}, \cite{Men5}, \cite{Wu20}, \cite{Zou20}, \cite{Tar20}, \cite{FLY12}, \cite{Fri19}, \cite{RWZ19},
\cite{RYYY} etc., and the references therein for some resent results.

\subsection{Motivation and results}
Fix a bi-degree $(p,q)$.
Schweitzer \cite{Sch07} introduced a new hypercohomology description of the Bott--Chern cohomology.
To be more specific, he defined a sheaf complex, denoted by $\mathscr{B}^{\bullet}_{X}(p,q)$, and we call it the \emph{Bott--Chern complex}.
The construction of Bott--Chern complex is very similar to the Deligne complex.
It contains the sheaf of locally constant functions, the truncated holomorphic de Rham complex, and the truncated anti-holomorphic de Rham complex of $X$.
Moreover, the $(p+q)$-th hypercohomology of $\mathscr{B}^{\bullet}_{X}(p,q)$ is isomorphic to the $(p,q)$-th Bott--Chern cohomology, see \cite[Propositions 4.2, 4.3]{Sch07}.
Contrary to the Dolbeault resolution of the (anti) holomorphic de Rham complex, we would like emphasize that the Bott--Chern complex does not admit a good resolution of fine sheaves.
Notice that both $(p,0)$ and $(0,p)$-th Bott--Chern numbers admit the bimeromorphic invariance.
In particular, for threefolds, the Non-K\"{a}hlerness degrees are bimeromorphic invariants which are defined in terms of Bott--Chern cohomology groups.
By definition, the Bott--Chern hypercohomology includes more information of $X$, both topological and holomorphic.
So a natural problem that arises now is:
\begin{prob}
For a compact complex manifold $X$, is it possible to constuct some new bimeromorphic invariants of $X$ in terms of Bott--Chern hypercohomology groups?
\end{prob}

In this article, we set ourselves a more modest goal.
As a continuation of our previous work \cite{YY20},
via comparing the Bott--Chern hypercohomology under a blow-up,
we define some new bimeromorphic invariants of an arbitrary compact complex manifold.
More precisely, we obtain the following.

\begin{thm}[=Theorem \ref{birat-inv-thm1} \& Theorem \ref{real-inv}]\label{thm1}
Suppose $X$ is a compact complex manifold.
Let $\mathcal{H}_{X}$ be the sheaf of germs of pluriharmonic functions on $X$,  and $\Omega_{X}^{\bullet \geq 1}$ the truncated holomorphic de Rham complex.
Then we have:
\begin{enumerate}
  \item [(i)]For any integer $k\geq1$, both the kernel and cokernel of the morphism
  $$
  \mathfrak{C}^{k}:H^{k-1}(X,\mathcal{H}_{X}) \longrightarrow H^{k-1}(X,\Omega_{X}^{\bullet \geq 1})
  $$
  are bimeromorphic invariants, where $\mathfrak{C}^{k}$ is the morphism induced by \eqref{map-d-com}.
  In particular, the integer
  $$
  \spadesuit^{k}(X)=
\dim_{\mathbb{C}}\, H^{k-1}(X,\Omega_{X}^{\bullet \geq 1})-\dim_{\mathbb{C}}\,H^{k-1}(X,\mathcal{H}_{X})
$$
is a bimeromorphic invariant of $X$.
\item [(ii)] The integer
$$
\clubsuit^{k}(X)=
\dim_{\mathbb{C}} H_{dR}^{k}(X; \mathbb{C})-\dim_{\mathbb{C}}\,H^{k-1}(X,\mathcal{H}_{X})
$$
is a bimeromorphic invariant of $X$, where $H_{dR}^{k}(X; \mathbb{C})$ is the $k$-th de Rham cohomology of $X$.
\end{enumerate}
\end{thm}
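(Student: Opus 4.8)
The argument will run through the weak factorization theorem, exactly as in \cite{YY20}: since by \cite{AKMW02} every bimeromorphic map between compact complex manifolds factors into a finite sequence of blow-ups and blow-downs with smooth centres, it suffices to show that a blow-up $\pi\colon\widetilde X\to X$ along a smooth center $Z\subset X$ of codimension $r\geq2$ leaves the kernel and cokernel of $\mathfrak C^k$ unchanged (this gives (i)), and likewise leaves the integer $\clubsuit^k$ unchanged (this gives (ii)). Write $E=\mathbb{P}(N_{Z/X})$ for the exceptional divisor. I will use repeatedly that $R\pi_*\mathcal O_{\widetilde X}=\mathcal O_X$, so that $\pi^*\colon H^\bullet(X,\mathcal O_X)\xrightarrow{\ \sim\ }H^\bullet(\widetilde X,\mathcal O_{\widetilde X})$ (equivalently, the $(0,\bullet)$-Hodge numbers are preserved, cf.\ \cite{Hir64}), together with the classical blow-up formula $H^k_{dR}(\widetilde X;\mathbb C)\cong H^k_{dR}(X;\mathbb C)\oplus\bigoplus_{i=1}^{r-1}H^{k-2i}_{dR}(Z;\mathbb C)$, in which the $i$-th summand is inserted through a Gysin-type map and hence acquires a Hodge--Tate twist by $i$; in particular the exceptional part of $H^k_{dR}(\widetilde X;\mathbb C)$ has no component of Hodge type $(0,\bullet)$.

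The first step is to pin down the blow-up behaviour of the three cohomologies that occur. For this I would feed the above into the long exact sequences attached to the short exact sequences $0\to\mathbb C_X\to\mathcal O_X\oplus\overline{\mathcal O_X}\to\mathcal H_X\to0$ and $0\to\Omega_X^{\bullet\geq1}\to\Omega_X^\bullet\to\mathcal O_X\to0$ (and their pullbacks to $\widetilde X$): since the exceptional part of $H^\bullet(\widetilde X;\mathbb C)\cong H^\bullet_{dR}(\widetilde X;\mathbb C)$ maps to zero in $H^\bullet(\mathcal O)$, a short diagram chase shows that $\pi^*$ is injective both on $H^{k-1}(\mathcal H)$ and on $H^{k-1}(\Omega^{\bullet\geq1})$ and that in each case the cokernel of $\pi^*$ is $\bigoplus_{i=1}^{r-1}H^{k-2i}_{dR}(Z;\mathbb C)$. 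The clean way to organise this — and the one the earlier sections of the paper make available — is to deduce all three formulas simultaneously from the blow-up formula for Bott--Chern hypercohomology, of the form $\mathbb H^\bullet(\widetilde X,\mathscr B^\bullet_{\widetilde X}(p,q))\cong\mathbb H^\bullet(X,\mathscr B^\bullet_X(p,q))\oplus\bigoplus_{i=1}^{r-1}\mathbb H^{\bullet-2i}(Z,\mathscr B^\bullet_Z(p-i,q-i))$ with its canonical $\pi^*$-splitting, together with the observation that for $p=q=1$ every centre-contribution has parameters $(1-i,1-i)$ with $i\geq1$, for which $\mathscr B^\bullet_Z(1-i,1-i)$, the holomorphic de Rham complex $\Omega_Z^\bullet$ and the constant sheaf $\mathbb C_Z$ are pairwise quasi-isomorphic. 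Thus, up to the degree shift built into the notation $H^\bullet(X,\Omega^{\bullet\geq1}_X)$, the $Z$-contribution to each of $H^{k-1}(\mathcal H)$, $H^{k-1}(\Omega^{\bullet\geq1})$ and $H^k_{dR}$ is canonically $\bigoplus_{i=1}^{r-1}H^{k-2i}_{dR}(Z;\mathbb C)$.

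Now I would invoke naturality of the canonical morphism \eqref{map-d-com}. Being a morphism of complexes of sheaves and $\pi$ being proper, it is compatible with $\pi^*$ and with the splittings above, so under those splittings $\mathfrak C^k_{\widetilde X}$ is the direct sum of $\mathfrak C^k_X$ and of the morphism that \eqref{map-d-com} induces on the $Z$-contributions. On the $Z$-contributions, however, \eqref{map-d-com} is the canonical comparison $\mathscr B^\bullet_Z(1-i,1-i)\to\Omega_Z^\bullet$, which for $i\geq1$ is just the quasi-isomorphism $\mathbb C_Z\hookrightarrow\Omega_Z^\bullet$ and therefore induces an isomorphism on cohomology. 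Hence $\mathfrak C^k_{\widetilde X}\cong\mathfrak C^k_X\oplus(\text{isomorphism})$, so $\ker\mathfrak C^k_{\widetilde X}\cong\ker\mathfrak C^k_X$ and $\coker\mathfrak C^k_{\widetilde X}\cong\coker\mathfrak C^k_X$; in particular $\spadesuit^k(\widetilde X)=\spadesuit^k(X)$, which is (i). For (ii), $H^k_{dR}(\widetilde X;\mathbb C)$ and $H^{k-1}(\widetilde X,\mathcal H_{\widetilde X})$ both acquire exactly the summand $\bigoplus_{i=1}^{r-1}H^{k-2i}_{dR}(Z;\mathbb C)$, so $\clubsuit^k(\widetilde X)=\clubsuit^k(X)$.

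I expect the main obstacle to be the compatibility used in the last paragraph: showing that \eqref{map-d-com} really is block-diagonal for the $\pi^*$-splitting, with its exceptional block equal to the canonical morphism on $Z$ at the shifted bi-degrees. Making this precise means tracing \eqref{map-d-com} through the proof of the blow-up formula — through the Leray spectral sequence of $\pi$, the identification of the higher direct images $R^{\bullet}\pi_*$ of the sheaves and complexes involved (supported on $Z$ and computed fibrewise by the cohomology of $\mathbb{P}^{r-1}$, which is purely of Hodge--Tate type), and the projective bundle formula on $E=\mathbb{P}(N_{Z/X})$ — and checking naturality at each stage; the inputs one needs are $\pi_*\mathcal H_{\widetilde X}=\mathcal H_X$, $R\pi_*\mathcal O_{\widetilde X}=\mathcal O_X$, and the compatibility of \eqref{map-d-com} with the derived-category decompositions of $\mathbb C_{\widetilde X}$, $\mathcal O_{\widetilde X}$ and $\Omega_{\widetilde X}^\bullet$. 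A secondary, purely bookkeeping, matter is to keep track of the degree convention so that the source $H^{k-1}(X,\mathcal H_X)$ and target $H^{k-1}(X,\Omega_X^{\bullet\geq1})$ of $\mathfrak C^k$ land in the degree dictated by the Bott--Chern hypercohomology.
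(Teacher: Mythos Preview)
Your proposal is correct and follows essentially the same route as the paper: reduce via weak factorization to a single blow-up, invoke the blow-up formulae for $H^{k}_{BC}(-,\mathbb{C}(1,1))\cong H^{k-1}(-,\mathcal{H})$ and for $\mathbb{H}^{k}(-,\mathbb{C}(1))\cong H^{k-1}(-,\Omega^{\bullet\geq1})$ (Theorems \ref{main-thm1} and \ref{hol-bl-u-f}), observe that for $i\geq1$ both $\mathscr{B}^{\bullet}_{Z}(1-i,1-i)$ and $\mathbb{C}_{Z}(1-i)$ collapse to $\mathbb{C}_{Z}$ so the centre contribution is $\bigoplus_{i=1}^{c-1}H_{dR}^{k-2i}(Z;\mathbb{C})$ in both cases, and then read off that $\mathfrak{C}^{k}_{\widetilde X}=\mathfrak{C}^{k}_{X}\oplus\mathrm{id}$; part (ii) is the same comparison with the de Rham blow-up formula. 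The compatibility you flag as the ``main obstacle'' is exactly what the paper secures by writing both blow-up isomorphisms via the \emph{same} explicit map $\Phi$ of \eqref{BC-explicit-map} (built from $\pi_{\ast}$, $\tilde{\imath}^{\star}$ and cup with $h^{i}$), so the diagram \eqref{vai-1-0} commutes on the nose; you do not need to chase through Leray spectral sequences separately.
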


To accomplish this goal, we make a careful comparison between the Bott--Chern hypercohomology and other cohomologies.
Particularly, we study the higher direct images of the Bott--Chern complex under the projection of the projective bundle.
This enables us to derive a blow-up formula with an explicit morphism for Bott--Chern hypercohomology (Theorem \ref{main-thm1}) via a sheaf-theoretic approach originally used in \cite{YY20}.
It should be pointed out that each term in the Bott--Chern complex $\mathscr{B}^{\bullet}_{X}(p,q)$ has a canonical fine resolution, i.e. the de Rham resolution of $\mathbb{C}_{X}$ or the Dolbeault resolution of the sheaf of germs of (anti) holomorphic forms. However, these resolutions can not make up a double complex of sheaves and hence can not give rise to a fine resolution of $\mathscr{B}^{\bullet}_{X}(p,q)$.

For compact complex threefolds, as mentioned above, it was shown in \cite{YY20} that the Non-K\"{a}hlerness degrees are bimermorphic invariants.
It is noteworthy that, for a general bi-degree $(p,q)$ with $0<p,q<3$, both the Hodge number $h^{p,q}_{\bar{\partial}}$ and the Bott--Chern number $h^{p,q}_{BC}$ are not stable under bimermorphic maps.
In the preprint version \cite{RYY17}, together with Sheng Rao, we show that the integer
$$
h^{p,q}_{BC}-h^{p,q}_{\bar{\partial}}
$$
is a bimermorphic invariant of compact complex threefolds.
Observe that there exists a natural map from the Bott--Chern cohomology to the Dolbeault cohomology defined by the identity map.
It is of importance to point out that this map is neither injective nor surjective if the $\partial\bar{\partial}$-lemma fails.
As a by-product, we refine \cite[Corollary 1.5]{RYY17} and get the following result.

\begin{thm}[=Theorem \ref{bc-d-inv-1}]\label{bc-d-inv-0}
Let $X$ be a compact complex threefold.
Consider the natural morphism
$$
\mathfrak{I}^{p,q}:
H^{p,q}_{BC}(X) \longrightarrow H^{p,q}_{\bar{\partial}}(X),
$$
for any $0\leq p,q\leq 3$.
Then both the kernel $\ker\,\mathfrak{I}^{p,q}$ and the cokernel $\mathrm{coker}\,\mathfrak{I}^{p,q}$
are bimeromorphic invariants of $X$.
\end{thm}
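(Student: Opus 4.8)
The plan is to transport the blow-up formulas for Bott--Chern and Dolbeault cohomology through the natural morphism $\mathfrak{I}$, and then to exploit that on a threefold every smooth blow-up centre is K\"{a}hler. For a compact complex manifold $Y$ write $\mathfrak{I}^{p,q}_{Y}\colon H^{p,q}_{BC}(Y)\to H^{p,q}_{\bar{\partial}}(Y)$ for the identity-induced morphism. Let $\pi\colon\widetilde{X}\to X$ be the blow-up of a compact complex threefold $X$ along a smooth centre $Z$, which we may assume connected; put $r=\operatorname{codim}_{X}Z$, let $j_{E}\colon E=\PV(N_{Z/X})\hookrightarrow\widetilde{X}$ be the exceptional divisor with induced projective bundle $\pi_{E}\colon E\to Z$, let $h$ be the first Chern class of $\CO_{E}(1)$, and set $\Gamma^{p,q}_{\sharp}:=\bigoplus_{k=1}^{r-1}H^{p-k,q-k}_{\sharp}(Z)$ for $\sharp\in\{BC,\bar{\partial}\}$. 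Both cohomology theories satisfy a blow-up formula
\begin{equation}\label{bcd-blowup}
H^{p,q}_{\sharp}(X)\ \oplus\ \Gamma^{p,q}_{\sharp}\ \xrightarrow{\ \sim\ }\ H^{p,q}_{\sharp}(\widetilde{X}),
\end{equation}
given by $\pi^{*}$ on the first summand and by the Gysin-type maps $(j_{E})_{*}\circ\bigl(h^{k-1}\smile\pi_{E}^{*}(-)\bigr)$, $1\le k\le r-1$, on $\Gamma^{p,q}_{\sharp}$: for $\sharp=BC$ this explicit morphism is the one furnished by Theorem~\ref{main-thm1} (via $H^{p,q}_{BC}\cong\mathbb{H}^{p+q}(X,\mathscr{B}^{\bullet}_{X}(p,q))$), and for $\sharp=\bar{\partial}$ it is the classical Dolbeault blow-up formula (see, e.g., \cite{RYY17} and the references therein). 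The first step is merely to record these two formulas side by side.

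The main obstacle will be the second step: checking that $\mathfrak{I}$ is compatible with \eqref{bcd-blowup}, i.e.\ that the square
\[
\begin{tikzcd}
H^{p,q}_{BC}(X)\oplus\Gamma^{p,q}_{BC} \arrow[r,"\sim"] \arrow[d,"\mathfrak{I}_{X}\,\oplus\,\bigoplus_{k}\mathfrak{I}_{Z}"'] & H^{p,q}_{BC}(\widetilde{X}) \arrow[d,"\mathfrak{I}_{\widetilde{X}}"] \\
H^{p,q}_{\bar{\partial}}(X)\oplus\Gamma^{p,q}_{\bar{\partial}} \arrow[r,"\sim"'] & H^{p,q}_{\bar{\partial}}(\widetilde{X})
\end{tikzcd}
\]
commutes, where the vertical arrows carry the morphisms $\mathfrak{I}$ in the evident bidegrees. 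Since $\mathfrak{I}$ is induced by the identity on smooth forms, it commutes with the pullbacks $\pi^{*}$ and $\pi_{E}^{*}$ and with cup product against $h$ (note $\mathfrak{I}^{1,1}_{E}$ carries the Bott--Chern Chern class of $\CO_{E}(1)$ to its Dolbeault Chern class); the one point that genuinely requires argument is that $\mathfrak{I}$ intertwines the two Gysin push-forwards $(j_{E})_{*}$. I would settle this by realizing both push-forwards via wedging with the current of integration along $E$, so that the passage from a $d$-closed pure-type representative to its underlying $\bar{\partial}$-class manifestly commutes with $(j_{E})_{*}$ — this being exactly the way the explicit morphism of Theorem~\ref{main-thm1} is constructed. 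Granting commutativity, \eqref{bcd-blowup} identifies $\mathfrak{I}^{p,q}_{\widetilde{X}}$ with the block-diagonal map $\mathfrak{I}^{p,q}_{X}\oplus\bigoplus_{k}\mathfrak{I}^{p-k,q-k}_{Z}$, whence
\begin{align*}
\ker\,\mathfrak{I}^{p,q}_{\widetilde{X}}&\cong\ker\,\mathfrak{I}^{p,q}_{X}\ \oplus\ \bigoplus_{k=1}^{r-1}\ker\,\mathfrak{I}^{p-k,q-k}_{Z},\\
\coker\,\mathfrak{I}^{p,q}_{\widetilde{X}}&\cong\coker\,\mathfrak{I}^{p,q}_{X}\ \oplus\ \bigoplus_{k=1}^{r-1}\coker\,\mathfrak{I}^{p-k,q-k}_{Z}.
\end{align*}

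For the final step I would use that $X$ is a threefold: a smooth connected centre $Z\subset X$ is either a point or a compact Riemann surface, hence a compact K\"{a}hler manifold on which the $\partial\bar{\partial}$-lemma holds, so by \cite{DGMS} every $\mathfrak{I}^{p-k,q-k}_{Z}$ is an isomorphism. Thus all the $Z$-summands above have zero kernel and cokernel, giving $\ker\,\mathfrak{I}^{p,q}_{\widetilde{X}}\cong\ker\,\mathfrak{I}^{p,q}_{X}$ and $\coker\,\mathfrak{I}^{p,q}_{\widetilde{X}}\cong\coker\,\mathfrak{I}^{p,q}_{X}$; in particular $\dim_{\mathbb{C}}\ker\,\mathfrak{I}^{p,q}$ and $\dim_{\mathbb{C}}\coker\,\mathfrak{I}^{p,q}$ are unchanged under a blow-up along a smooth centre. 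Since the weak factorization theorem \cite{AKMW02} joins any two bimeromorphic compact complex threefolds by a finite chain of such blow-ups and blow-downs through intermediate threefolds, it follows that $\ker\,\mathfrak{I}^{p,q}$ and $\coker\,\mathfrak{I}^{p,q}$ are bimeromorphic invariants. As a consistency check, $\dim_{\mathbb{C}}\ker\,\mathfrak{I}^{p,q}-\dim_{\mathbb{C}}\coker\,\mathfrak{I}^{p,q}=h^{p,q}_{BC}(X)-h^{p,q}_{\bar{\partial}}(X)$, so the argument recovers \cite[Corollary~1.5]{RYY17} and refines it by showing that each of the two terms is separately a bimeromorphic invariant.
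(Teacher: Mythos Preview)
Your proposal is correct and follows essentially the same strategy as the paper: compare the explicit Bott--Chern and Dolbeault blow-up formulas through the identity-induced morphism $\mathfrak{I}$, use that on a threefold any smooth centre is a point or a curve (hence satisfies the $\partial\bar\partial$-lemma, so each $\mathfrak{I}^{p-k,q-k}_{Z}$ is an isomorphism), and conclude by weak factorization. The only cosmetic difference is directional: the paper writes the blow-up isomorphism as $\Phi\colon H^{p,q}_{\sharp}(\tilde X)\to H^{p,q}_{\sharp}(X)\oplus\bigoplus_{i} H^{p-i,q-i}_{\sharp}(Z)$ via the map of Theorem~\ref{main-thm1} (built from $\pi_{\star}$ on currents together with the projections $\Pi_{i}\circ\tilde\imath^{\star}$), for which commutativity with $\mathfrak{I}$ is immediate and your separate discussion of the Gysin push-forward $(j_{E})_{*}$ becomes unnecessary.
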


It is worthy to note that the kernel (resp. cokernel) of $\mathfrak{I}^{p,q}$ may be non-trivial in general, even if $h^{p,q}_{BC}(X)-h^{p,q}_{\bar{\partial}}(X)=0$ holds.
To verify the non-triviality, we explicitly present the generators of the kernel $\ker\,\mathfrak{I}^{p,q}$ and the cokernel $\mathrm{coker}\,\mathfrak{I}^{p,q}$ for some compact complex nilmanifolds.

An outline of this article is organized as follows.
We devote Section \ref{bcc} to review the definition of Bott--Chern hypercohomology and establish a Poincar\'{e} type duality of Bott--Chern hypercohomology.
In Section \ref{geo-blw}, we study the behavior of Bott--Chern hypercohomology under proper modifications and mainly focus on the blow-up transformations.
In Section \ref{bimero-inv}, we construct some new bimeromorphic invariants of compact complex manifolds in terms of Bott--Chern hypercohomology.
In Section \ref{example}, we compute the bimeromorphic invariants defined in this article for some compact complex threefolds.
In Appendix \ref{appA}, we briefly review the definition and basic properties of relative Dolbeault sheaves with respect to a closed complex submanifold.
In Appendix \ref{appB}, we investigate the higher direct images of the Bott--Chern complexes under the projective bundle morphism.


\subsection*{Acknowledgement}
We would like to express our great gratitude to Professors An-Min Li, Guosong Zhao, Xiaojun Chen, and Bohui Chen for their constant encouragements and supports,
and sincerely thank the School of Mathematics of Sichuan University and Tianyuan Mathemtical Center in Southwest China for hosting our research visit during the winter of 2021 when we were working on this work.
We are indebted to Sheng Rao for informing us of the paper \cite{Ste22}, and Lingxu Meng for many helpful suggestions.
In particular, we are grateful to Jonas Stelzig for sending us the new version of \cite{Ste21c} and suggesting us the further Problem \ref{prob4.2}.
This work is partially supported by the National Nature Science Foundation of China (Grant No. 12171351, 11701414, and 11701051), and the Natural Science Foundation of Tianjin (Grant No. 20JCQNJC02000).


\subsection*{Conventions}
Let $f: Y\rightarrow X$ be a holomorphic map of complex manifolds.
The following symbols will have a fixed meaning throughout this paper.
\begin{itemize}
\item $\mathcal{A}_{X}^{k}$ the sheaf of complex differential $k$-forms;
\item $\mathcal{A}_{X}^{s,t}$ the sheaf of differential $(s,t)$-forms on $X$,
$\mathcal{A}^{s,t}(X):=\Gamma(X, \mathcal{A}_{X}^{s,t})$ the space of differential $(s,t)$-forms,
and $\mathcal{D}_{X}^{s, t}$ the sheaf of $(s,t)$-current on $X$;
\item $\Omega_{X}^{s}$ (resp. $\bar{\Omega}_{X}^{s}$) the sheaf of (resp. anti-) holomorphic $s$-forms on $X$, and $\mathcal{O}_{X}:=\Omega_{X}^{0}$ the structure sheaf of $X$;
\item $\Omega_{X}^{\bullet<p}$ the truncated holomorphic de Rham complex ended at $\Omega^{p-1}_{X}$;
\item $\Omega_{X}^{\bullet\geq p}$ the truncated holomorphic de Rham complex started at $\Omega^{p}_{X}$;
\item $\Omega_{X}^{s\leq\bullet\leq t}$ the truncated holomorphic de Rham complex from $\Omega^{s}_{X}$ to $\Omega^{t}_{X}$;
\item $\mathbb{G}_{X}$ the constant sheaf with value in the group $\mathbb{G}$ on $X$ ($\mathbb{G}=\mathbb{Z}, \mathbb{R}, \mathbb{C}$), i.e, the sheaf of locally constant sheaves with value in $\mathbb{G}$;
\item $f^{\star}$ the pullback of differential forms;
\item $f_{\ast}$  the direct image, and $Rf_{\ast}$ the derived direct image;
\item $f_{!}$      the proper direct image (if $f$ is proper, then $f_{\ast}=f_{!}$);
\item $f^{-1}$    the topological inverse image.
\end{itemize}


\section{Bott--Chern hypercohomology}\label{bcc}
Throughout of this paper, we assume that $X$ is a compact complex manifold of complex dimension $n$.
Recall that the sheaf $\Omega_{X}^{s}$ has a fine resolution
$$
\xymatrix@C=0.5cm{
0\ar[r]^{} &\Omega_{X}^{s} \ar[r]^{} & \mathcal{A}_{X}^{s,0} \ar[r]^{\bar{\partial}}
\ar[r]^{} & \mathcal{A}_{X}^{s,1} \ar[r]^{\bar{\partial}}& \cdots  \ar[r]^{\bar{\partial}} & \mathcal{A}_{X}^{s,n}  \ar[r]^{}& 0},
$$
which is called {\it Dolbeault resolution}.
Using this resolution, one obtains the Dolbeault theorem
$$
H_{\bar{\partial}}^{p,q}(X)
=H^{q}(\Gamma(X, \mathcal{A}_{X}^{p,\bullet}),\bar{\partial})
\cong\mathbb{H}^{q}(X, \mathcal{A}_{X}^{p,\bullet})
\cong H^{q}(X, \Omega_{X}^{p}),
$$
where $\mathbb{H}^{q}(X, \mathcal{A}_{X}^{p,\bullet})$ is the hypercohomology of the sheaf complex.
For all $0\leq p,q\leq n$, the {\it $(p,q)$-th Bott--Chern cohomology} of $X$ is defined to be the quotient space
$$
H_{BC}^{p,q}(X):=\frac{\mathrm{ker}\, \partial\cap \mathrm{ker}\, \bar{\partial}\cap\mathcal{A}^{p,q}(X)}
{\mathrm{im}\, \partial\bar{\partial}\,\cap\mathcal{A}^{p,q}(X)}
$$
and the {\it $(p,q)$-th Aeppli cohomology} of $X$ is defined to be the quotient space
$$
H_{A}^{p,q}(X):=\frac{\mathrm{ker}\, \partial\bar{\partial}\,\cap\mathcal{A}^{p,q}(X)}
{(\mathrm{im}\, \partial+\mathrm{im}\,\bar{\partial})\,\cap\mathcal{A}^{p,q}(X)}.
$$

In \cite[\S\,4.b]{Sch07}, Schweitzer showed that there exist three hypercohomology descriptions of the Bott--Chern cohomology.
In what follows, we fix a \textbf{bi-degree} $(p,q)$ with $p,q\geq 0$ and setup the following notations:
$$
\mathscr{L}_{X}^{l}=\bigoplus_{s+t=l,\atop s<p,t<q}\mathcal{A}^{s,t}_{X}\,\,\,\mathrm{when}\,\,\,l\leq p+q-2
$$
and
$$
\mathscr{L}_{X}^{l-1}=\bigoplus_{s+t=l,\atop s\geq p,t\geq q}\mathcal{A}^{s,t}_{X}\,\,\,\textrm{when}\,\,\,l\geq p+q.
$$
Define the operators:
\begin{itemize}
  \item
  $
  \delta_{l}=\mathrm{pr}\circ d:\Gamma(X,\mathscr{L}_{X}^{l})\longrightarrow
  \Gamma(X,\mathscr{L}_{X}^{l+1}),
  $
  for each $l\leq p+q-3$, where $d=\partial+\bar{\partial}$ is the de Rham differential operator and $\mathrm{pr}:\Gamma(X,\mathcal{A}_{X}^{l+1})\rightarrow\Gamma(X,\mathscr{L}_{X}^{l+1})$
  is the projection.
  \item $\delta_{k-2}=\partial\bar{\partial}:
  \Gamma(X,\mathscr{L}_{X}^{k-2})\longrightarrow\Gamma(X,\mathscr{L}_{X}^{k-1})$, for $k:=p+q$;
  \item $\delta_{l}=d:\Gamma(X,\mathscr{L}_{X}^{l})\longrightarrow\Gamma(X,\mathscr{L}_{X}^{l+1})$
  for any $l\geq k-1$.
\end{itemize}

Naturally, there is sheaf complex $\mathscr{L}_{X}^{\bullet}(p,q)$ of \textbf{fine} sheaves:
\begin{equation*}\label{BC-complex1}
\xymatrix@C=0.5cm{
\cdots \ar[rr]^{\delta_{k-4}} && \mathscr{L}^{k-3}_{X} \ar[rr]^{\delta_{k-3}} && \mathscr{L}^{k-2}_{X}
  \ar[rr]^{\delta_{k-2}}&&\mathscr{L}^{k-1}_{X} \ar[rr]^{\delta_{k-1}} &&  \mathscr{L}^{k}_{X} \ar[rr]^{\delta_{k}}
  && \cdots.}
\end{equation*}
By definition, the $(p,q)$-th Bott--Chern cohomology can be reinterpreted as hypercohomology
\begin{equation}\label{hyper-bc}
H^{p,q}_{BC}(X)
\cong H^{p+q-1}(\Gamma(X, \mathscr{L}_{X}^{\bullet}(p,q)))
\cong \mathbb{H}^{p+q-1}(X,\mathscr{L}_{X}^{\bullet}(p,q)),
\end{equation}
which is essentially given by smooth differential forms.
Specially, if $p=q=1$ we have
$$
\mathscr{L}_{X}^{\bullet}(1,1):
\xymatrix@C=0.5cm{
  0 \ar[r] & \mathcal{A}^{0,0}_{X} \ar[r]^{\partial\bar{\partial}}
  & \mathcal{A}^{1,1}_{X} \ar[r]^{d\quad\,\,\,}
  &  \mathcal{A}^{1,2}_{X}\oplus\mathcal{A}^{2,1}_{X}\ar[r]^{d\quad\,\,}
  & \mathcal{A}^{1,3}_{X}\oplus\mathcal{A}^{2,2}_{X}\oplus\mathcal{A}^{3,1}_{X} \ar[r]^{\quad\quad \quad d} & \cdots.}
$$
Let $\mathcal{H}_{X}$ be the kernel sheaf of the morphism $\partial\bar{\partial}:\mathcal{A}^{0,0}_{X}\rightarrow\mathcal{A}^{1,1}_{X}$.
In fact, $\mathcal{H}_{X}$ is nothing else than the sheaf of germs of $\mathbb{C}$-valued pluriharmonic functions on $X$.
Moreover, we have
\begin{prop}[{\cite[Theorema 2.1]{Big69} or \cite[\S\,2.2, Remark]{Wel08}}]\label{pluri-har}
The sheaf $\mathcal{H}_{X}$ admits a  fine resolution
$$
\xymatrix@C=0.5cm{
  0 \ar[r] & \mathcal{H}_{X}\ar[r]^{}&\mathcal{A}^{0,0}_{X} \ar[r]^{\partial\bar{\partial}}
  & \mathcal{A}^{1,1}_{X} \ar[r]^{d\quad\,\,\,}
  &  \mathcal{A}^{1,2}_{X}\oplus\mathcal{A}^{2,1}_{X}\ar[r]^{d\quad\,\,}
  & \mathcal{A}^{1,3}_{X}\oplus\mathcal{A}^{2,2}_{X}\oplus\mathcal{A}^{3,1}_{X} \ar[r]^{\quad\quad \quad d} & \cdots,}
$$
and therefore we have
\begin{equation*}\label{b-1-1}
\mathbb{H}^{l}(X,\mathscr{L}_{X}^{\bullet}(1,1))\cong
H^{l}(X,\mathcal{H}_{X}),
\end{equation*}
for any integer $l\geq0$.
\end{prop}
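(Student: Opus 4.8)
The plan is to prove the two assertions in turn, the second being a formal consequence of the first. The statement that $\mathscr{L}^{\bullet}_{X}(1,1)$ resolves $\mathcal{H}_{X}$ is precisely the exactness of
\[
0\longrightarrow \mathcal{H}_{X}\longrightarrow \mathcal{A}^{0,0}_{X}
\xrightarrow{\partial\bar{\partial}} \mathcal{A}^{1,1}_{X}
\xrightarrow{d} \mathcal{A}^{1,2}_{X}\oplus\mathcal{A}^{2,1}_{X}
\xrightarrow{d}\cdots
\]
as a complex of sheaves. Granting this, the isomorphism $\mathbb{H}^{l}(X,\mathscr{L}^{\bullet}_{X}(1,1))\cong H^{l}(X,\mathcal{H}_{X})$ follows exactly as in the derivation of the Dolbeault theorem recalled at the beginning of this section: every $\mathcal{A}^{s,t}_{X}$ is fine, hence soft and $\Gamma(X,-)$-acyclic on the paracompact manifold $X$, so the augmentation $\mathcal{H}_{X}[0]\to\mathscr{L}^{\bullet}_{X}(1,1)$ is a quasi-isomorphism of complexes of sheaves with acyclic target and therefore induces isomorphisms on hypercohomology, $\mathbb{H}^{l}(X,\mathscr{L}^{\bullet}_{X}(1,1))\cong\mathbb{H}^{l}(X,\mathcal{H}_{X}[0])=H^{l}(X,\mathcal{H}_{X})$ for all $l\geq0$. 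Thus the whole content is the exactness of the displayed complex, which I would test on stalks, i.e. over an arbitrarily small polydisc $\Delta$.

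Over such a $\Delta$, exactness at $\mathcal{H}_{X}$ and at $\mathcal{A}^{0,0}_{X}$ is immediate from the definition of $\mathcal{H}_{X}$ as $\ker(\partial\bar{\partial}\colon\mathcal{A}^{0,0}_{X}\to\mathcal{A}^{1,1}_{X})$, and $d\circ\partial\bar{\partial}=0$ gives the inclusion of the image in the kernel everywhere. The crucial point is exactness at $\mathcal{A}^{1,1}_{X}$, which is the local $\partial\bar{\partial}$-lemma: a $d$-closed $(1,1)$-form $\alpha$ on $\Delta$ is $\partial\bar{\partial}$ of a function. I would deduce this from the ordinary Poincar\'e lemma together with the Dolbeault--Grothendieck lemma and its conjugate: writing $\alpha=d\gamma$ with $\gamma=\gamma^{1,0}+\gamma^{0,1}$, a comparison of bidegrees forces $\partial\gamma^{1,0}=0$, $\bar{\partial}\gamma^{0,1}=0$ and $\alpha=\bar{\partial}\gamma^{1,0}+\partial\gamma^{0,1}$; then $\gamma^{1,0}=\partial u$ and $\gamma^{0,1}=\bar{\partial}v$ for functions $u,v$ on $\Delta$, so $\alpha=\bar{\partial}\partial u+\partial\bar{\partial}v=\partial\bar{\partial}(v-u)$.

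For the remaining positions $\mathscr{L}^{l}_{X}=\bigoplus_{s+t=l+1,\ s,t\geq1}\mathcal{A}^{s,t}_{X}$ with $l\geq2$, where the differential is $d$, I would argue similarly. Since $\partial$ and $\bar{\partial}$ each preserve the condition that both bidegree indices are $\geq1$, the form $d\eta$ already lies in $\mathscr{L}^{l+1}_{X}$ for $\eta\in\mathscr{L}^{l}_{X}$, so a section with $d\eta=0$ in $\mathscr{L}^{\bullet}_{X}(1,1)$ is $d$-closed as an ordinary form, and the Poincar\'e lemma gives $\eta=d\mu$ with $\mu\in\mathcal{A}^{l}(\Delta)$. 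It then remains to replace $\mu$ by a cohomologous form lying in $\mathscr{L}^{l-1}_{X}$, i.e. to kill the ``pure'' components $\mu^{l,0}$ and $\mu^{0,l}$. Since $\eta$ has no $(l+1,0)$- or $(0,l+1)$-component, these two components are $\partial$- (resp. $\bar{\partial}$-) closed, hence equal to $\partial\rho$ (resp. $\bar{\partial}\sigma$) for $\rho\in\mathcal{A}^{l-1,0}(\Delta)$, $\sigma\in\mathcal{A}^{0,l-1}(\Delta)$ by the (conjugate) Dolbeault--Grothendieck lemma; replacing $\mu$ by $\mu-d\rho-d\sigma$ removes both pure components, keeps $d\mu=\eta$, and introduces no new pure components because the correction terms $\bar{\partial}\rho$, $\partial\sigma$ have both indices $\geq1$ when $l\geq2$.

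I expect no conceptual obstacle here: the only delicate part is the bidegree bookkeeping in the last step, checking that each auxiliary form produced by the (conjugate) Dolbeault--Grothendieck lemma really lands in the relevant piece of $\mathscr{L}^{\bullet}_{X}(1,1)$ and that the successive corrections do not interfere. Should one prefer, the exactness is exactly Bigolin's theorem \cite[Theorema 2.1]{Big69} (see also \cite[\S\,2.2, Remark]{Wel08}), and the argument above is simply a self-contained rederivation tailored to the complex $\mathscr{L}^{\bullet}_{X}(1,1)$.
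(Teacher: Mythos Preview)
The paper does not supply its own proof of this proposition: it is recorded as a known result, with citations to Bigolin \cite{Big69} and Wells \cite{Wel08}. Your self-contained argument via the local Poincar\'e and (conjugate) Dolbeault--Grothendieck lemmas is correct and is precisely the standard derivation underlying those references; the bidegree bookkeeping in your last step goes through as written.
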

A real version of Proposition \ref{pluri-har} was also considered by Harvey--Lawson \cite[Proposition 1]{HL83}.
Moreover, from \eqref{hyper-bc} we have
$$
H^{1,1}_{BC}(X)
\cong
\mathbb{H}^{1}(X,\mathscr{L}_{X}^{\bullet}(1,1))
\cong
H^{1}(X,\mathcal{H}_{X}).
$$

The second sheaf complex associated to Bott--Chern cohomology contains the sheaves of germs of (anti-)holomorphic forms.
We denote $\mathscr{S}_{X}^{\bullet}(p,q)$:
\begin{equation*}
\xymatrix@C=0.5cm{
0\ar[r]^{} & \mathcal{O}_{X}+\bar{\mathcal{O}}_{X} \ar[r]^{} &\Omega_{X}^{1}\oplus \bar{\Omega}_{X}^{1}
  \ar[r]^{}& \cdots  \ar[r]^{}& \Omega_{X}^{p-1}\oplus \bar{\Omega}_{X}^{p-1} \ar[r]^{} & \bar{\Omega}_{X}^{p} \ar[r]^{} & \cdots   \ar[r]^{} & \bar{\Omega}_{X}^{q-1} \ar[r]^{} & 0.}
\end{equation*}
Here the sheaf $\mathcal{O}_{X}+\bar{\mathcal{O}}_{X}$ is isomorphic to $\mathcal{H}_{X}$ under the natural inclusion $\mathcal{O}_{X}+\bar{\mathcal{O}}_{X}\hookrightarrow\mathcal{H}_{X}$.

The third sheaf complex associated to Bott--Chern cohomology contains the sheaves of germs of (anti-)holomorphic forms which augmented over $\mathbb{C}$.

\begin{defn}\label{BCS-complex}
The $(p,q)$-type \emph{Bott--Chern complex} of $X$ is defined to be $\mathscr{B}_{X}^{\bullet}(p,q)$:
\begin{equation}\label{BC-complex2}
\small{
\xymatrix@C=0.4cm{
0\ar[r]^{} & \mathbb{C}_{X}\ar[r]^{(+,-)\;\;\;\;\;\;\;} & \mathcal{O}_{X}\oplus \bar{\mathcal{O}}_{X} \ar[r]^{} &\Omega_{X}^{1}\oplus \bar{\Omega}_{X}^{1}
  \ar[r]^{}& \cdots  \ar[r]^{}& \Omega_{X}^{p-1}\oplus \bar{\Omega}_{X}^{p-1} \ar[r]^{} & \bar{\Omega}_{X}^{p} \ar[r]^{} & \cdots   \ar[r]^{} & \bar{\Omega}_{X}^{q-1} \ar[r]^{} & 0.}}
\end{equation}
The $k$-th hypercohomology of \eqref{BC-complex2}, denoted by
$$
H^{k}_{BC}(X,\mathbb{C}(p,q)):=\mathbb{H}^{k}(X,\mathscr{B}_{X}^{\bullet}(p,q))
$$
is called the $k$-th \emph{Bott--Chern hypercohomology} of $X$ with respect to the bi-degree $(p,q)$.
\end{defn}

The Bott--Chern hypercohomology groups are finite dimensional complex vector spaces if $X$ is compact.
This comes from the fact that the Bott--Chern complex is quasi-isomorphic to a sheaf complex which has finite dimensional hypercohomology (cf. \cite[Theorem 12.4]{Dem12}).
The complex $\mathscr{L}_{X}^{\bullet}(p,q)$ has the virtue that each component is a fine sheaf.
So we can compute its hypercohomology groups via the corresponding complex of global sections.
In particular, we have the following result, see \cite[Lemma 12.1]{Dem12} and \cite[Propositions 4.2 and 4.3]{Sch07}.

\begin{lem}\label{equal-cohom-BCcomplex}
There exist canonical quasi-isomorphisms of sheaf complexes
$$
\mathscr{B}^{\bullet}_{X}(p,q)
\stackrel{\sim}\longrightarrow
\mathscr{S}^{\bullet}_{X}(p,q)[-1]
\stackrel{\sim}\longrightarrow
\mathscr{L}^{\bullet}_{X}(p,q)[-1],
$$
which induces an isomorphism of hypercohomology
$$
H^{k}_{BC}(X,\mathbb{C}(p,q))
\cong \mathbb{H}^{k-1}(X,\mathscr{L}_{X}^{\bullet}(p,q))
\cong H^{k-1}(\Gamma(X, \mathscr{L}_{X}^{\bullet}(p,q)))
$$
for any $k\in \mathbb{Z}$.
In particular, $H^{k}_{BC}(X,\mathbb{C}(p,q))$ is trivial except for $1\leq k\leq 2n$.
\end{lem}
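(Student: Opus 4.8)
The plan is to establish the two quasi-isomorphisms in turn and then read off the cohomological statements; since this is essentially \cite[Propositions~4.2 and 4.3]{Sch07} and \cite[Lemma~12.1]{Dem12}, I would only sketch the argument. The sole analytic inputs are: the short exact sequence of sheaves $0\to\mathbb{C}_{X}\to\mathcal{O}_{X}\oplus\bar{\mathcal{O}}_{X}\to\mathcal{O}_{X}+\bar{\mathcal{O}}_{X}\to0$ (from $\mathcal{O}_{X}\cap\bar{\mathcal{O}}_{X}=\mathbb{C}_{X}$), the Dolbeault--Grothendieck lemma, and the local $\partial\bar{\partial}$-lemma on polydiscs; the rest is diagram chasing.

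First I would treat $\mathscr{B}^{\bullet}_{X}(p,q)\to\mathscr{S}^{\bullet}_{X}(p,q)[-1]$. It is the zero map in degree $0$, the canonical surjection $\mathcal{O}_{X}\oplus\bar{\mathcal{O}}_{X}\twoheadrightarrow\mathcal{O}_{X}+\bar{\mathcal{O}}_{X}$, $(f,g)\mapsto f+g$, in degree $1$, and the identity in every degree $\geq 2$; compatibility with the differentials is immediate. It is degreewise surjective, and its kernel is the two-term complex $[\,\mathbb{C}_{X}\xrightarrow{\ \sim\ }\mathbb{C}_{X}\,]$ concentrated in degrees $0$ and $1$ — the degree-$1$ term being $\{(f,-f):f\in\mathbb{C}_{X}\}$ and the connecting map the restriction of $(+,-)$, an isomorphism. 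This kernel being acyclic, the long exact sequence of cohomology sheaves for $0\to K^{\bullet}\to\mathscr{B}^{\bullet}_{X}(p,q)\to\mathscr{S}^{\bullet}_{X}(p,q)[-1]\to0$ forces the morphism to be a quasi-isomorphism.

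Next I would treat $\mathscr{S}^{\bullet}_{X}(p,q)[-1]\to\mathscr{L}^{\bullet}_{X}(p,q)[-1]$, equivalently $\mathscr{S}^{\bullet}_{X}(p,q)\to\mathscr{L}^{\bullet}_{X}(p,q)$. Using the Dolbeault resolutions $\Omega^{s}_{X}\hookrightarrow\mathcal{A}^{s,\bullet}_{X}$ and $\bar{\Omega}^{t}_{X}\hookrightarrow\mathcal{A}^{\bullet,t}_{X}$, the smooth sheaves $\mathcal{A}^{s,t}_{X}$ are organized into the ``staircase'' complex $\mathscr{L}^{\bullet}_{X}(p,q)$, and the inclusion of germs of holomorphic and anti-holomorphic forms into smooth forms is compatible with the operators $\delta_{\bullet}$. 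To see this inclusion is a quasi-isomorphism it suffices to check it on stalks, i.e. over a polydisc $U$: on the ``low'' part $s<p,\,t<q$ the cohomology of the columns and rows recovers $\Omega^{s}_{X}|_{U}$ and $\bar{\Omega}^{t}_{X}|_{U}$ by the Dolbeault--Grothendieck lemma, while the junction through $\delta_{k-2}=\partial\bar{\partial}$ to the ``high'' part $s\geq p,\,t\geq q$ is controlled by the local $\partial\bar{\partial}$-lemma (over $U$, every $\partial$- and $\bar{\partial}$-closed $(s,t)$-form with $s\geq p\geq1$, $t\geq q\geq1$ is $\partial\bar{\partial}$ of a smooth $(s-1,t-1)$-form). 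A double-complex spectral sequence argument — or a direct chase — then shows the cone of $\mathscr{S}^{\bullet}_{X}(p,q)\to\mathscr{L}^{\bullet}_{X}(p,q)$ is acyclic; the case $p=q=1$ is exactly Proposition~\ref{pluri-har}. I expect this junction step to be the main obstacle, as it is the only place where an analytic fact beyond the Poincar\'{e}-type lemmas enters.

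Finally, composing the quasi-isomorphisms yields
\[
H^{k}_{BC}(X,\mathbb{C}(p,q))=\mathbb{H}^{k}(X,\mathscr{B}^{\bullet}_{X}(p,q))\cong\mathbb{H}^{k}(X,\mathscr{L}^{\bullet}_{X}(p,q)[-1])=\mathbb{H}^{k-1}(X,\mathscr{L}^{\bullet}_{X}(p,q)).
\]
Since each $\mathscr{L}^{l}_{X}$ is a fine sheaf, hence $\Gamma$-acyclic, the hypercohomology spectral sequence of $\mathscr{L}^{\bullet}_{X}(p,q)$ degenerates and its hypercohomology is computed by the complex of global sections, which gives the last isomorphism. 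For the vanishing range, $\mathscr{L}^{l}_{X}$ is a direct sum of sheaves $\mathcal{A}^{s,t}_{X}$ with $0\leq s,t\leq n$, so $\mathscr{L}^{\bullet}_{X}(p,q)$ is concentrated in degrees $0\leq l\leq 2n-1$, hence $\mathscr{L}^{\bullet}_{X}(p,q)[-1]$ in degrees $1\leq k\leq 2n$; therefore $H^{k}_{BC}(X,\mathbb{C}(p,q))=0$ for $k\notin[1,2n]$.
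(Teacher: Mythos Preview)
Your sketch is correct and follows exactly the route the paper indicates: the paper does not give its own proof but simply refers to \cite[Lemma 12.1]{Dem12} and \cite[Propositions 4.2 and 4.3]{Sch07}, which are precisely the arguments you outline. Your treatment of the first quasi-isomorphism via the acyclic kernel $[\mathbb{C}_{X}\xrightarrow{\sim}\mathbb{C}_{X}]$, the second via the Dolbeault and local $\partial\bar{\partial}$-lemmas, and the vanishing range via the degree span of $\mathscr{L}^{\bullet}_{X}(p,q)$ are all accurate.
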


Given a Hermitian metric $g$ on $X$.
From a result by Schweitzer \cite[Section 2.c]{Sch07}, for each bi-degree $(r,s)$,
the Hodge-$\ast$-operator
$\ast:\mathcal{A}^{r,s}(X)\rightarrow \mathcal{A}^{n-r,n-s}(X)$
induces an isomorphism
\begin{equation}\label{BC-A-duality}
\ast:H^{r,s}_{BC}(X)\stackrel{\simeq}\longrightarrow H^{n-r,n-s}_{A}(X).
\end{equation}
In general, consider the $l$-th hypercohomology group $\mathbb{H}^{l}(X,\mathscr{L}_{X}^{\bullet}(p,q))$.
For the simplicity, we set $p^{\prime}=n-p+1$ and $q^{\prime}=n-q+1$,
and denote by $\Gamma_{X}^{\bullet}(p,q):=\Gamma(X, \mathscr{L}_{X}^{\bullet}(p,q))$.
Then we get
$$
\mathbb{H}^{l}(X,\mathscr{L}_{X}^{\bullet}(p,q))\cong H^{l}(\Gamma_{X}^{\bullet}(p,q))
$$
and
$$
\mathbb{H}^{2n-l-1}(X,\mathscr{L}^{\bullet}_{X}(p^{\prime},q^{\prime}))\cong H^{2n-l-1}(\Gamma_{X}^{\bullet}(p^{\prime},q^{\prime})).
$$
The Hodge-$\ast$-operator determines a morphism from
$\Gamma_{X}^{\bullet}(p,q)$ to $\Gamma_{X}^{\bullet}(p^{\prime},q^{\prime})$, see the figure below.

$$
\begin{tikzpicture}[scale=0.5]
\draw[-latex]  (4,5)--(6,7) node[below] {$\qquad (p,q)$};
  \shade[ball color=black] (6,7) circle (0.1);
    \shade[ball color=black] (4,5) circle (0.1);
\draw  (5,6.5) node[left] {$\partial\bar{\partial}$};
\draw[blue] (2,5) --(4,5)--(6,7)--(6,9);
\draw[blue] (4,2) --(4,5)--(6,7)--(8,7);
\draw[dashed,blue] (4,2) --(4,0);
\draw[dashed,blue] (0,5) --(2,5);
\draw[dashed,blue] (6,10) --(6,9);
\draw[dashed,blue] (8,7) --(10,7);
\draw (0,0) --(10,0)--(10,10)--(0,10)--(0,0);
\draw[-latex]  (16,3)--(18,5);
\draw  (17,4.5) node[left] {$\partial\bar{\partial}$};
  \shade[ball color=black] (16,3) circle (0.1);
    \shade[ball color=black] (18,5) circle (0.1);
\draw  (16,3)node[right] {$(n-p,n-q)$};
\draw[red] (15,3) --(16,3)--(18,5)--(21,5);
\draw[red] (16,1) --(16,3)--(18,5)--(18,9);
\draw[dashed,red] (16,1) --(16,0);
\draw[dashed,red] (13,3) --(15,3);
\draw[dashed,red] (18,10) --(18,9);
\draw[dashed,red] (21,5) --(23,5);
\draw (13,0) --(23,0)--(23,10)--(13,10)--(13,0);
\draw  (10.5,5) node[right] {$\stackrel{*}{\longrightarrow}$};
\end{tikzpicture}
$$

The following result is a slight generalization of the duality between the Bott--Chern and Aeppli cohomologies.
Compare also \cite[Corollary A.2]{Ste22} for a similar result using the structure theory of double complexes.

\begin{prop}\label{SD-bchper}
Let $X$ be a compact complex manifold of complex dimension $n$.
Given a Hermitian metric $g$ on $X$, the Hodge-$\ast$-operator induces an isomorphism
\begin{equation}\label{sd-iso}
\ast:\mathbb{H}^{l}(X,\mathscr{L}_{X}^{\bullet}(p,q))
\stackrel{\simeq}\longrightarrow
\mathbb{H}^{2n-l-1}(X,\mathscr{L}^{\bullet}_{X}(n-p+1,n-q+1)),
\end{equation}
for any integer $l\geq0$.
In particular, there is an isomorphism
$$
H^{k}_{BC}(X,\mathbb{C}(p,q))
\cong
H^{2n+1-k}_{BC}(X,\mathbb{C}(n-p+1,n-q+1))
$$
for any integer $k\geq1$.
\end{prop}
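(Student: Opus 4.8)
The plan is to deduce the duality at the level of hypercohomology from the classical fact that the Hodge-$\ast$-operator identifies Bott--Chern with Aeppli cohomology, but worked out directly at the level of the fine complexes $\mathscr{L}_{X}^{\bullet}(p,q)$ so that the shifted and reindexed statement \eqref{sd-iso} drops out cleanly. The key point, already indicated in the figure, is that $\ast$ sends $\mathcal{A}^{s,t}_{X}$ to $\mathcal{A}^{n-s,n-t}_{X}$, and that the index constraints defining the two ``halves'' of $\mathscr{L}_{X}^{\bullet}(p,q)$ are interchanged under $(s,t)\mapsto(n-s,n-t)$ precisely when one passes from the bi-degree $(p,q)$ to $(p',q')=(n-p+1,n-q+1)$: the condition $s<p,\ t<q$ becomes $n-s>n-p\ \Leftrightarrow\ n-s\geq n-p+1=p'$, and similarly $t\geq q$ becomes $n-t<n-q+1=q'$. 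So $\ast$ carries the ``first half'' of $\mathscr{L}_{X}^{\bullet}(p,q)$ (in degrees $\leq k-2$) into the ``second half'' of $\mathscr{L}_{X}^{\bullet}(p',q')$ and vice versa, with the connecting term $\mathscr{L}^{k-1}_{X}(p,q)=\bigoplus_{s\geq p,\,t\geq q}\mathcal{A}^{s,t}_{X}$ going to $\bigoplus_{s<p',\,t<q'}\mathcal{A}^{s,t}_{X}=\mathscr{L}^{k'-2}_{X}(p',q')$, where $k'=p'+q'=2n+2-k$.

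First I would set up the bijection on indices carefully: if $L=\mathscr{L}_{X}^{l}(p,q)$ sits in the first half (so $l\leq k-2$, and its summands have $s+t=l$, $s<p$, $t<q$), then $\ast L$ has summands with $s'+t'=2n-l$, $s'\geq p'$, $t'\geq q'$, i.e. $\ast L=\mathscr{L}^{2n-l-1}_{X}(p',q')$, which lies in the second half of the $(p',q')$-complex; the analogous computation for the second half gives $\ast\,\mathscr{L}^{l}_{X}(p,q)=\mathscr{L}^{2n-l-1}_{X}(p',q')$ in the first half. In both regimes the degree $l$ of the source corresponds to degree $2n-l-1$ of the target, which is the reindexing appearing in \eqref{sd-iso}. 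Next I would check compatibility with the differentials. In the regimes where $\delta$ is $d=\partial+\bar\partial$ (after projecting, but in the second half there is no projection), the identities $\ast\,d\,\ast=\pm\bar{d}$ type relations — more precisely the standard formulas $\partial^{\ast}=-\ast\bar\partial\,\ast$, $\bar\partial^{\ast}=-\ast\partial\,\ast$ up to sign, together with the fact that in the bottom part one uses the \emph{de Rham} differential while in the top part one also uses $d$ — show that $\ast$ intertwines $\delta_{l}$ on $\Gamma_{X}^{\bullet}(p,q)$ with $\pm\delta_{2n-l-2}$ on $\Gamma_{X}^{\bullet}(p',q')$; at the unique middle spot the differential $\delta_{k-2}=\partial\bar\partial$ is carried to $\pm\partial\bar\partial=\pm\delta_{k'-2}$ by the same formulas. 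Signs can be absorbed by rescaling the components of the chain map by suitable powers of $\sqrt{-1}$, as is routine in Bott--Chern/Aeppli duality; I would not belabor them. Since each $\mathscr{L}^{l}_{X}(\cdot)$ is a fine sheaf, hypercohomology is computed by the complex of global sections $\Gamma_{X}^{\bullet}$, and $\ast$ is an isomorphism of sheaves, hence an isomorphism of the global-section complexes up to the reindexing $l\leftrightarrow 2n-l-1$ and up to sign; therefore it induces the claimed isomorphism on cohomology, which is \eqref{sd-iso}.

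Finally, the statement about $H^{k}_{BC}(X,\mathbb{C}(p,q))$ follows by applying Lemma \ref{equal-cohom-BCcomplex}, which gives $H^{k}_{BC}(X,\mathbb{C}(p,q))\cong\mathbb{H}^{k-1}(X,\mathscr{L}_{X}^{\bullet}(p,q))$ and $H^{2n+1-k}_{BC}(X,\mathbb{C}(p',q'))\cong\mathbb{H}^{2n-k}(X,\mathscr{L}_{X}^{\bullet}(p',q'))$; setting $l=k-1$ in \eqref{sd-iso} gives exactly $\mathbb{H}^{k-1}(X,\mathscr{L}_{X}^{\bullet}(p,q))\cong\mathbb{H}^{2n-k}(X,\mathscr{L}_{X}^{\bullet}(p',q'))$, which combines the two. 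I expect the main obstacle to be purely bookkeeping: verifying that the Schweitzer projection operators $\mathrm{pr}$ in the definition of $\delta_{l}$ are compatible with $\ast$ (i.e. that $\ast$ really does swap the ``too-low'' and ``too-high'' bidegree regions so that projecting before or after applying $\ast$ agrees), and pinning down the sign twists so that $\ast$ becomes an honest morphism of complexes rather than merely a morphism up to homotopy. An alternative, shorter route — worth mentioning — is to invoke the Bott--Chern/Aeppli duality \eqref{BC-A-duality} together with the hypercohomology descriptions of Aeppli cohomology à la Schweitzer, or to cite the double-complex argument of \cite[Corollary A.2]{Ste22}; but the direct $\ast$-argument above is the most self-contained.
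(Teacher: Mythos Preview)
There is a genuine gap in your argument. You claim that the identities $\partial^{\ast}=-\ast\bar\partial\,\ast$ and $\bar\partial^{\ast}=-\ast\partial\,\ast$ show that $\ast$ intertwines $\delta_{l}$ on $\Gamma_{X}^{\bullet}(p,q)$ with $\pm\delta_{2n-l-2}$ on $\Gamma_{X}^{\bullet}(p',q')$, but these identities say precisely the opposite: they show $\ast d\ast = \pm d^{\ast}$, so that $\ast$ intertwines the differential $\delta$ with its \emph{formal adjoint} $\delta^{\ast}$, not with $\delta$ itself. Concretely, $\ast\alpha$ is $d$-closed if and only if $\alpha$ is $d^{\ast}$-closed, which is in general unrelated to $\alpha$ being $d$-closed. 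Hence $\ast$ is \emph{not} a chain map between the two $\mathscr{L}$-complexes (even up to sign twists), and does not descend to cohomology by the argument you outline. This is the same obstruction one meets already for de Rham cohomology: Poincar\'{e} duality via $\ast$ is not obtained by writing $\ast$ as a chain isomorphism on $(\mathcal{A}^{\bullet},d)$.

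The paper's proof supplies exactly the missing idea. It computes the formal adjoints $\delta_{l}^{\ast}$ explicitly (they are $-\Pi\circ d^{\ast}$, with $\Pi$ the relevant bidegree projection), defines the self-adjoint operator $\Delta_{l}=\delta_{l}^{\ast}\delta_{l}+\delta_{l-1}\delta_{l-1}^{\ast}$, verifies ellipticity, and then invokes the Hodge theorem to identify $H^{l}(\Gamma_{X}^{\bullet}(p,q))$ with the space of $\Delta_{l}$-harmonic forms $\mathcal{H}^{l}_{p,q}$. On harmonic forms both $\delta$ and $\delta^{\ast}$ vanish, so the intertwining problem disappears and $\ast$ does give an isomorphism $\mathcal{H}^{l}_{p,q}\stackrel{\simeq}{\to}\mathcal{H}^{2n-l-1}_{p',q'}$. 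The two boundary cases $l=p+q-1$ and $l=p+q-2$ are handled directly by the known Bott--Chern/Aeppli duality \eqref{BC-A-duality}. Your bookkeeping on the index bijection and on which half of each complex lands where is correct and useful, but it must be fed into a Hodge-theoretic argument rather than a chain-map argument.
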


\begin{proof}
According to the duality \eqref{BC-A-duality},
when $l=p+q-1$, we get
$$
\ast:H^{p+q-1}(\Gamma_{X}^{\bullet}(p,q))=H^{p,q}_{BC}(X)
\stackrel{\simeq}\longrightarrow
H^{n-p,n-q}_{A}(X)=H^{2n-p-q}(\Gamma_{X}^{\bullet}(p^{\prime},q^{\prime}));
$$
$$
\ast:H^{p+q-2}(\Gamma_{X}^{\bullet}(p,q))=H^{p-1,q-1}_{A}(X)\stackrel{\simeq}\longrightarrow
H^{p^{\prime},q^{\prime}}_{BC}(X)=H^{2n-p-q+1}(\Gamma_{X}^{\bullet}(p^{\prime},q^{\prime})).
$$
So we only need to verify the assertion in the case of $l\geq p+q$ or $l\leq p+q-3$.
Without loss of generality, we assume that $l\geq p+q$.
Then $\mathbb{H}^{l}(X,\mathscr{L}_{X}^{\bullet}(p,q))$ is equal to the cohomology of the complex
$$
\xymatrix@C=0.4cm{
  \bigoplus\limits_{s+t=l\atop{s\geq p,t\geq q}}\mathcal{A}^{s,t}(X)
   \ar[rr]^{\delta_{l-1}} &&
   \bigoplus\limits_{s+t=l+1\atop{s\geq p,t\geq q}}\mathcal{A}^{s,t}(X) \ar[rr]^{\delta_{l}} &&
   \bigoplus\limits_{s+t=l+2\atop{s\geq p,t\geq q}}\mathcal{A}^{s,t}(X). }
$$
For any $0\leq s,t\leq n$, the Hermitian metric $g$ determines a canonical Hermitian structure on the complex vector bundle
$\bigwedge^{s}(T^{1,0}X)^{\ast}\bigoplus_{\mathbb{C}}
\bigwedge^{t}(T^{0,1}X)^{\ast}$.
As a result, the differential operators $\delta_{l-1}$ and $\delta_{l}$ above admit the unique formal adjoint operators $\delta^{\ast}_{l-1}$ and
$\delta^{\ast}_{l}$ (cf. \cite[Proposition 2.8]{Wel08}).
Let $\alpha$ and $\beta$ be two arbitrary forms expressed as
$$
\alpha=\sum^{l-q+1}_{j=p}\alpha^{j,l-j+1}\in\bigoplus\limits_{s+t=l+1\atop{s\geq p,t\geq q}}\mathcal{A}^{s,t}(X),\,\,\,\,\quad\quad
\beta=\sum^{l-q+2}_{j=p}\beta^{j,l-j+2}\in\bigoplus\limits_{s+t=l+2\atop{s\geq p,t\geq q}}\mathcal{A}^{s,t}(X).
$$
By Stokes' theorem and the basic properties of the Hodge-$\ast$-operator, we have
\begin{eqnarray*}
   \int_{X}d\alpha\wedge\ast\beta
  &=&\int_{X}d(\alpha\wedge\ast\beta)-(-1)^{l+1}\int_{X}\alpha\wedge d\ast\beta
   = (-1)^{l}\int_{X}\alpha\wedge d\ast\beta\\
   &=&(-1)^{2n+2n(l+1)+1}\int_{X}\alpha\wedge \ast(\ast d\ast\beta)\\
   &=&-\int_{X}\alpha\wedge \ast(d^{\ast}\beta).
\end{eqnarray*}
Observe that $\partial^{\ast}\beta^{p,l-p+2}$ and $\bar{\partial}^{\ast}\beta^{l-q+2,q}$ are of types $(p-1,l-p+2)$ and $(l-q+2,q-1)$ respectively.
For the degree reason, we obtain
$\alpha\wedge\ast(\partial^{\ast}\beta^{p,l-p+2})=0$ and
$\alpha\wedge\ast(\bar{\partial}^{\ast}\beta^{l-q+2,q})=0$.
From definition, we have
$$
(\delta_{l}\alpha,\beta)=\int_{X}d\alpha\wedge\ast\beta=-\int_{X}\alpha\wedge \ast(d^{\ast}\beta)
=\int_{X}\alpha\wedge \ast(-\Pi\circ d^{\ast}\beta),
$$
where
$$
\Pi:\bigoplus\limits_{s+t=l+1\atop{s\geq p-1,t\geq q-1}}\mathcal{A}^{s,t}(X)
\longrightarrow
\bigoplus\limits_{s+t=l+1\atop{s\geq p,t\geq q}}\mathcal{A}^{s,t}(X)
$$
is the projection.
As a result, we get $\delta^{\ast}_{l}=-\Pi\circ d^{\ast}$, namely, there exists a commutative diagram
$$
\xymatrix@=1.5cm{
  \bigoplus\limits_{s+t=l+2\atop{s\geq p,t\geq q}}\mathcal{A}^{s,t}(X)
   \ar[dr]_{\delta^{\ast}_{l}} \ar[r]^{-d^{\ast}}
   & \bigoplus\limits_{s+t=l+1\atop{s\geq p-1,t\geq q-1}}\mathcal{A}^{s,t}(X)
   \ar[d]^{\Pi}  \\
   & \bigoplus\limits_{s+t=l+1\atop{s\geq p,t\geq q}}\mathcal{A}^{s,t}(X),            }
$$

Similar to $\delta^{\ast}_{l}$, we obtain the expression of the adjoint operator $\delta^{\ast}_{l-1}$.
Put $\Delta_{l}=\delta^{\ast}_{l}\delta_{l}+\delta_{l-1}\delta^{\ast}_{l-1}$.
Following the steps in the proof of \cite[Lemma 5.18]{Voi02}, without any essential changes, we can show that $\Delta_{l}$ is a self-adjoint elliptic differential operator acting on $\Gamma_{X}^{l}(p,q)$.
According to the Hodge theorem of self-adjoint elliptic operators on compact oriented smooth manifolds \cite[Theorem 4.12]{Wel08}, we have an isomorphism
\begin{equation*}
H^{l}(\Gamma_{X}^{\bullet}(p,q))\cong\ker\,
\bigl[\Delta_{l}:\Gamma^{l}_{X}(p,q)\rightarrow
\Gamma^{l}_{X}(p,q)\bigr]:
=\mathcal{H}^{l}_{p,q}.
\end{equation*}
Similarly, we have another self-adjoint elliptic differential operator
$$
\Delta_{2n-l-1}:\Gamma^{2n-l-1}_{X}(p^{\prime},q^{\prime})\rightarrow
\Gamma^{2n-l-1}_{X}(p^{\prime},q^{\prime})
$$
such that
\begin{equation*}
H^{2n-l-1}(\Gamma_{X}^{\bullet}(p^{\prime},q^{\prime}))\cong\ker\,\Delta_{2n-l-1}:
=\mathcal{H}^{2n-l-1}_{p^{\prime},q^{\prime}}.
\end{equation*}
A direct checking shows that the map
$$
\ast:\mathcal{H}^{l}_{p,q}\longrightarrow\mathcal{H}^{2n-l-1}_{p^{\prime},q^{\prime}}
$$
is an isomorphism and therefore we are led to the conclusion that \eqref{sd-iso} is isomorphic.
\end{proof}

Consider the bilinear pairing
\begin{equation}\label{pairing}
  \langle-,-\rangle:
  \mathbb{H}^{l}(X,\mathscr{L}_{X}^{\bullet}(p,q))\times
  \mathbb{H}^{2n-l-1}(X,\mathscr{L}^{\bullet}_{X}(n-p+1,n-q+1))\longrightarrow \mathbb{C}
\end{equation}
defined by setting
$$
\langle[\alpha],[\beta]\rangle= \int_{X}\alpha\wedge\beta.
$$
On account of the degree reason, this pairing is independent of the choices of the representatives and hence is well-defined.
As a corollary of Proposition \ref{SD-bchper}, we obtain
\begin{cor}
The bilinear pairing \eqref{pairing} is a non-degenerate duality.
\end{cor}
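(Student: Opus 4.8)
The plan is to deduce the corollary from Proposition~\ref{SD-bchper} and the Hodge-theoretic picture set up in its proof, rather than proving anything new about the complexes $\mathscr{L}_{X}^{\bullet}(p,q)$. First I would recall from that proof that
$$
\mathbb{H}^{l}(X,\mathscr{L}_{X}^{\bullet}(p,q))\cong\mathcal{H}^{l}_{p,q},\qquad
\mathbb{H}^{2n-l-1}(X,\mathscr{L}^{\bullet}_{X}(n-p+1,n-q+1))\cong\mathcal{H}^{2n-l-1}_{n-p+1,n-q+1},
$$
both finite-dimensional (they are kernels of self-adjoint elliptic operators on a compact manifold), and of the same dimension $N$ in view of the isomorphism $\ast$ furnished by Proposition~\ref{SD-bchper}. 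Since $\langle-,-\rangle$ descends to hypercohomology for the degree reasons already indicated before the corollary, it induces a linear map
$$
\Phi\colon\mathbb{H}^{l}(X,\mathscr{L}_{X}^{\bullet}(p,q))\longrightarrow\bigl(\mathbb{H}^{2n-l-1}(X,\mathscr{L}^{\bullet}_{X}(n-p+1,n-q+1))\bigr)^{\vee},\qquad[\alpha]\longmapsto\langle[\alpha],-\rangle,
$$
and \eqref{pairing} is a non-degenerate (perfect) duality exactly when $\Phi$ is an isomorphism. As source and target have the same finite dimension $N$, it suffices to show that $\Phi$ is injective, i.e. that the pairing is non-degenerate in its first argument.

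For this I would use harmonic representatives. Fix the Hermitian metric $g$ and let $0\neq[\alpha]\in\mathbb{H}^{l}(X,\mathscr{L}_{X}^{\bullet}(p,q))$. By the Hodge theorem for $\Delta_{l}$ established in the proof of Proposition~\ref{SD-bchper}, represent $[\alpha]$ by its nonzero harmonic representative $\alpha\in\mathcal{H}^{l}_{p,q}\subset\Gamma^{l}_{X}(p,q)$. That same proof shows $\ast$ maps $\mathcal{H}^{l}_{p,q}$ isomorphically onto $\mathcal{H}^{2n-l-1}_{n-p+1,n-q+1}$; in particular $\ast\alpha$ is $\Delta_{2n-l-1}$-harmonic, hence a cocycle, and defines a class $[\ast\alpha]\in\mathbb{H}^{2n-l-1}(X,\mathscr{L}^{\bullet}_{X}(n-p+1,n-q+1))$. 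Writing $\alpha=\sum\alpha^{a,b}$ in pure bidegrees, only the diagonal terms $\alpha^{a,b}\wedge\ast\alpha^{a,b}$ contribute to the top bidegree $(n,n)$, so
$$
\langle[\alpha],[\ast\alpha]\rangle=\int_{X}\alpha\wedge\ast\alpha=\sum_{a,b}\int_{X}\alpha^{a,b}\wedge\ast\alpha^{a,b}=\|\alpha\|_{g}^{2}>0
$$
by the defining property of the Hodge-$\ast$-operator of $g$. Hence $\Phi([\alpha])\neq0$, so $\Phi$ is injective.

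Combining the two steps, $\Phi$ is an injective linear map between $N$-dimensional spaces, hence an isomorphism, and its transpose is then an isomorphism as well; therefore \eqref{pairing} is a non-degenerate duality. The argument is uniform in $l$: the proof of Proposition~\ref{SD-bchper} produces a Hodge decomposition in every degree, either via $\Delta_{l}$ or, in the boundary cases $l\in\{p+q-2,\,p+q-1\}$, via Schweitzer's Bott--Chern and Aeppli Laplacians together with \eqref{BC-A-duality}, and the computation $\int_{X}\alpha\wedge\ast\alpha=\|\alpha\|_{g}^{2}$ is the same in all cases. The only point requiring care — and the one I would flag as the main (if modest) obstacle — is verifying that $\ast\alpha$ genuinely lands in the target complex and represents a hypercohomology class there; but this is precisely the harmonic-space isomorphism already established in Proposition~\ref{SD-bchper}, so no additional work is needed.
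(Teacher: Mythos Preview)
Your proposal is correct and follows essentially the same approach as the paper: represent classes by harmonic forms from the proof of Proposition~\ref{SD-bchper}, use that $\ast$ carries $\mathcal{H}^{l}_{p,q}$ isomorphically onto $\mathcal{H}^{2n-l-1}_{p',q'}$, and conclude from $\langle[\alpha],[\ast\alpha]\rangle=\int_{X}\alpha\wedge\ast\alpha=\|\alpha\|^{2}\neq0$ for $\alpha\neq0$. The paper's proof is just a two-line version of yours, omitting the explicit finite-dimension/injectivity framing.
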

\begin{proof}
From the proof of Proposition \ref{SD-bchper}, we know that $\alpha\in\mathcal{H}^{l}_{p,q}$ if and only if $\ast\alpha\in\mathcal{H}^{2n-l-1}_{p^{\prime},q^{\prime}}$.
Then the assertion comes from the fact that the integral
$\langle\alpha,\ast\alpha\rangle=\int_{X}\alpha\wedge\ast\alpha=\|\alpha\|^{2}$
does not vanish unless $\alpha=0$.
\end{proof}

\begin{rem}\label{birat-inv-thm0}
Let $X$ be a compact complex manifold of dimension $n\geq 2$.
From the Proposition \ref{SD-bchper}, we get
$$
\mathrm{dim}_{\mathbb{C}}\,H^{2n-1}_{BC}(X, \mathbb{C}(n-1,n-1))=
\mathrm{dim}_{\mathbb{C}}\,H^{2}_{BC}(X, \mathbb{C}(2,2))
=\mathrm{dim}_{\mathbb{C}}\,H^{1}_{dR}(X,\mathbb{C}),
$$
which is a bimeromorphic invariant of $X$.
\end{rem}

Next we make a comparison for the Bott--Chern complex and truncated holomorphic de Rham complex.
Consider the sheaf complex
\begin{equation*}\label{c-hol-tru}
\mathbb{C}_{X}(p):\xymatrix@C=0.3cm{
0\ar[rr]^{} && \mathbb{C}_{X}\ar[rr]^{} && \mathcal{O}_{X}\ar[rr]^{\partial} &&\Omega_{X}^{1}
  \ar[rr]^{\partial}&& \cdots  \ar[rr]^{\partial}&& \Omega_{X}^{p-1} \ar[rr]^{} && 0.}
\end{equation*}
Note that the holomorphic de Rham complex $(\Omega^{\bullet}_{X},\partial)$ is a resolution of $\mathbb{C}_{X}$ via the obvious inclusion $\imath:\mathbb{C}_{X}\rightarrow\mathcal{O}_{X}$ (cf. \cite[Lemma 8.13]{Voi02}).
It follows that $\mathbb{C}_{X}(p)$ is canonically quasi-isomorphic (after a degree shifting) to the truncated holomorphic de Rham complex $(\Omega^{\bullet\geq p}_{X}[-p],\partial)$ and therefore for any $k\in\mathbb{Z}$ we have the isomorphism
\begin{equation}\label{c-dol-0}
\mathbb{H}^{k}(X,\mathbb{C}_{X}(p))\cong\mathbb{H}^{k}(X,\Omega^{\bullet\geq p}_{X}[-p])=
\mathbb{H}^{k-p}(X,\Omega^{\bullet\geq p}_{X}).
\end{equation}
If $X$ is a K\"{a}hler manifold, by a result of Schweitzer \cite[Lemme 7.2]{Sch07} the equalities
\begin{equation}\label{c-dol-1}
\mathbb{H}^{l}(X,\Omega^{\bullet< p}_{X})=\bigoplus_{s<p\atop{s+t=l}}H^{s,t}_{\bar{\partial}}(X)
\,\,\,\mathrm{and}\,\,\,
\mathbb{H}^{l}(X,\Omega^{\bullet\geq p}_{X})=\bigoplus_{s\geq p\atop{s+t=l}}H^{s,t}_{\bar{\partial}}(X)
\end{equation}
hold for any $l\geq0$.

Observe that there exists a natural morphism of sheaf complexes:
\begin{equation*}\label{varpi}
\vcenter{
\xymatrix@C=0.5cm{
 \mathscr{B}^{\bullet}_{X}(p,p): \ar[d]_{\mathcal{P}} &  0 \ar[r]^{} & \mathbb{C}_{X}
  \ar[d]_{\mathrm{id}}\ar[r]^{} &  \Omega^{\bullet <p}_{X} \oplus
 \bar{\Omega}^{\bullet <p}_{X}\ar[d]_{\mathrm{pr}} \ar[r]^{} & \bar{\Omega}^{p\leq\bullet\leq q-1}_{X} \ar[d]_{} \ar[r]^{} & 0\\
\mathbb{C}_{X}(p): &  0 \ar[r]^{} & \mathbb{C}_{X} \ar[r]^{} & \Omega^{\bullet<p}_{X} \ar[r] &0
\ar[r] &0, }}
\end{equation*}
where $\rho$ is the projection.
As a direct consequence, we get a short exact sequence of sheaf complexes
\begin{equation}\label{s-e-bcs-d}
\xymatrix@C=0.5cm{
  0 \ar[r] & \bar{\Omega}^{\bullet <q}_{X}[-1] \ar[rr]^{\imath} && \mathscr{B}^{\bullet}_{X}(p,q) \ar[rr]^{\mathcal{P}} && \mathbb{C}_{X}(p) \ar[r] & 0. }
\end{equation}

\begin{rem}
Similarly, one can define the integral Bott--Chern complex which  is analogous to \eqref{BC-complex2} with $\mathbb{C}_{X}$ replaced by $\mathbb{Z}_{X}$ (cf. \cite{Sch07}).
For an integral Bott--Chern complex there exists a natural splitting as the direct sum of a truncated anti-holomorphic de Rham complex and an integral Deligne complex (cf. \cite[Proposition 7.3]{Sch07}).
Differing from the integral case, although the short exact sequence \eqref{s-e-bcs-d} is very similar to the one in the proof of \cite[Proposition 7.3]{Sch07}, it is not a splitting.
\end{rem}


\section{Geometry of blow-ups}\label{geo-blw}

In this section we study the behavior of the Bott--Chern hypercohomology under blow-up transformations of compact complex manifolds.

\subsection{Projective bundle formulae}\label{projective-b-f}

Suppose that $\mathcal{V}$ is a holomorphic vector bundle of rank $c$ over a compact complex manifold $Z$.
Let $\rho:\PV=\PV(\mathcal{V})\longrightarrow Z$ be the projective bundle and
$h:=c_{1}(\mathcal{O}_{\PV}(1))\in H^{2}(\PV, \mathbb{Z})$ the first Chern class of the relative tautological line bundle $\mathcal{O}_{\PV}(1)$.
Then we have

\begin{lem}\label{(1,1)-projbundle-formula}
There is a canonical quasi-isomorphism of sheaf complexes
$$
\varphi=\sum_{i=0}^{c-1} h^{i}\wedge \rho^{\star}:
\bigoplus_{i=0}^{c-1} \mathscr{L}_{Z}^{\bullet}(1-i,1-i)[-2i]
\stackrel{\sim}\longrightarrow
R\rho_{\ast}\mathscr{L}_{\PV}^{\bullet}(1,1).
$$
on $Z$, where $\mathscr{L}_{Z}^{\bullet}(1-i,1-i)=\mathcal{A}_{Z}^{\bullet}[1]$ for $i\geq 1$.
\end{lem}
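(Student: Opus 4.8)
The plan is to work locally on the base $Z$ and then globalize. Since the statement is about a quasi-isomorphism of complexes of sheaves on $Z$, it suffices to check that $\varphi$ induces an isomorphism on all stalks, or equivalently that the induced map of higher direct image sheaves
\[
\bigoplus_{i=0}^{c-1} \mathcal{H}^{k}\bigl(\mathscr{L}_{Z}^{\bullet}(1-i,1-i)[-2i]\bigr)\longrightarrow \mathcal{H}^{k}\bigl(R\rho_{\ast}\mathscr{L}_{\PV}^{\bullet}(1,1)\bigr)=R^{k}\rho_{\ast}\,\text{(something)}
\]
is an isomorphism for every $k$. First I would identify the cohomology sheaves of the source. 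By Proposition 2.2 (the fine resolution of $\mathcal{H}_{X}$), the complex $\mathscr{L}_{\PV}^{\bullet}(1,1)$ is a resolution of $\mathcal{H}_{\PV}$, so $R\rho_{\ast}\mathscr{L}_{\PV}^{\bullet}(1,1)\cong R\rho_{\ast}\mathcal{H}_{\PV}$ in the derived category; similarly $\mathscr{L}_{Z}^{\bullet}(1,1)$ resolves $\mathcal{H}_{Z}$, while for $i\geq 1$ the convention $\mathscr{L}_{Z}^{\bullet}(1-i,1-i)=\mathcal{A}_{Z}^{\bullet}[1]$ is (because $\mathcal{A}_Z^\bullet$ with $d$ resolves $\mathbb{C}_Z$) a shift of a resolution of $\mathbb{C}_{Z}$. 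So the statement is equivalent to the assertion that
\[
\varphi:\ \mathcal{H}_{Z}\ \oplus\ \bigoplus_{i=1}^{c-1}\mathbb{C}_{Z}[-2i]\ \stackrel{\sim}{\longrightarrow}\ R\rho_{\ast}\mathcal{H}_{\PV}
\]
in $D^{+}(Z)$, the map being $\rho^{\star}$ in degree $0$ and cup product with $h^{i}$ in degrees $2i$. I would phrase everything at the level of these cohomology sheaves.

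Next I would compute $R^{k}\rho_{\ast}\mathcal{H}_{\PV}$ directly. The short exact sequence $0\to\mathbb{C}_{\PV}\to\mathcal{O}_{\PV}\oplus\bar{\mathcal{O}}_{\PV}\to\mathcal{H}_{\PV}\to 0$ (from the $\mathscr{B}^{\bullet}(1,1)$ picture, with $\mathcal{H}_X\cong\mathcal O_X+\bar{\mathcal O}_X$) gives a long exact sequence of higher direct images. Here $R^{k}\rho_{\ast}\mathbb{C}_{\PV}$ is the local system with fibres $H^{k}(\mathbb{P}^{c-1},\mathbb{C})$, hence $\bigoplus_{i=0}^{c-1}\mathbb{C}_{Z}[-2i]$ by the Leray--Hirsch theorem, generated over $Z$ by the powers $h^{i}$; and $R^{k}\rho_{\ast}\mathcal{O}_{\PV}=\mathcal{O}_{Z}$ for $k=0$ and $0$ otherwise (standard computation of the cohomology of projective bundles, $H^{>0}(\mathbb{P}^{c-1},\mathcal{O})=0$), with the analogous statement for $\bar{\mathcal{O}}$. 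Chasing the long exact sequence then yields $R^{0}\rho_{\ast}\mathcal{H}_{\PV}=\mathcal{H}_{Z}$ (using $\rho_{\ast}\mathcal{O}_{\PV}=\mathcal{O}_{Z}$, $\rho_\ast\bar{\mathcal O}_{\PV}=\bar{\mathcal O}_Z$ and that these glue correctly over $\mathbb{C}_Z\hookrightarrow\mathcal H_Z$), that $R^{1}\rho_{\ast}\mathcal{H}_{\PV}=0$, and that for $k\geq 2$ the connecting maps are isomorphisms $R^{k}\rho_{\ast}\mathcal{H}_{\PV}\cong R^{k}\rho_{\ast}\mathbb{C}_{\PV}$, since the $\mathcal O\oplus\bar{\mathcal O}$ terms vanish there. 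Compatibility of the connecting map with cup product by $h$ (which holds because $h$ comes from $H^{2}(\PV,\mathbb{Z})$, i.e. from the $\mathbb{C}_{\PV}$ part, and cup product is a morphism of the relevant exact sequences) shows that $\varphi$ is precisely this collection of isomorphisms.

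Then I would verify that $\varphi$ is a morphism of \emph{complexes of sheaves}, not just an abstract isomorphism in the derived category, by checking that $h^{i}\wedge\rho^{\star}$ commutes with the differentials $\delta_{\bullet}$ on the nose. This is where one must be slightly careful about the shifts and about the definition of $\delta$ in the two regions ($\partial\bar\partial$ in the middle, $d$ elsewhere): I would choose a smooth closed representative form for $h$ (so $\rho^{\star}$ of anything wedged with $h^i$ stays annihilated or acted on by $d$ appropriately), note $\rho^{\star}$ commutes with $\partial,\bar\partial,d$ and with the projections defining the $\delta_{l}$, and that wedging with a $d$-closed form of type contributing only to the $(s\ge1,t\ge1)$ region is compatible with $\mathrm{pr}$. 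The main obstacle, I expect, is bookkeeping: making the identification $\mathscr{L}_{Z}^{\bullet}(1-i,1-i)=\mathcal{A}_{Z}^{\bullet}[1]$ for $i\ge 1$ interact correctly with the degree shift $[-2i]$ and with the fact that for $i\ge1$ there is no $\mathcal{H}$-term, only the de Rham complex — i.e. checking that $\varphi$ really lands in the right graded pieces and is a chain map across the "seam" between $i=0$ and $i\ge1$. Once the map is known to be a morphism of complexes inducing isomorphisms on cohomology sheaves, it is a quasi-isomorphism, proving the lemma. (An alternative, perhaps cleaner, route: deduce the statement from the analogous projective bundle formula for $\mathbb{C}_{\PV}$, $\mathcal{O}_{\PV}$ and $\bar{\mathcal{O}}_{\PV}$ separately, together with the exact triangle relating $\mathscr{L}^{\bullet}(1,1)$ to these, invoking the five lemma; I would present whichever is shorter.)
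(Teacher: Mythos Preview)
Your approach is essentially identical to the paper's: reduce to a statement about cohomology sheaves via the fine resolution $\mathscr{L}^{\bullet}(1,1)\simeq\mathcal{H}$, then compute $R^{j}\rho_{\ast}\mathcal{H}_{\PV}$ from the long exact sequence attached to $0\to\mathbb{C}_{\PV}\to\mathcal{O}_{\PV}\oplus\bar{\mathcal{O}}_{\PV}\to\mathcal{H}_{\PV}\to 0$, using $R^{j}\rho_{\ast}\mathbb{C}_{\PV}\cong H^{j}(\mathbb{P}^{c-1};\mathbb{C})\otimes\mathbb{C}_{Z}$ and $R^{>0}\rho_{\ast}\mathcal{O}_{\PV}=0$.

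There is, however, a consistent off-by-one error in your degree bookkeeping. The connecting map goes $R^{j}\rho_{\ast}\mathcal{H}_{\PV}\to R^{j+1}\rho_{\ast}\mathbb{C}_{\PV}$, so what you get is $R^{j}\rho_{\ast}\mathcal{H}_{\PV}\cong R^{j+1}\rho_{\ast}\mathbb{C}_{\PV}$ for $j\geq 1$, not $R^{k}\cong R^{k}$. In particular $R^{1}\rho_{\ast}\mathcal{H}_{\PV}\cong R^{2}\rho_{\ast}\mathbb{C}_{\PV}\cong\mathbb{C}_{Z}$ is \emph{not} zero. Correspondingly, on the source side $\mathscr{L}_{Z}^{\bullet}(1-i,1-i)[-2i]=\mathcal{A}_{Z}^{\bullet}[1-2i]$ for $i\geq 1$ has cohomology $\mathbb{C}_{Z}$ concentrated in degree $2i-1$, not $2i$. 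So the derived-category statement you want is
\[
\mathcal{H}_{Z}\ \oplus\ \bigoplus_{i=1}^{c-1}\mathbb{C}_{Z}[\,1-2i\,]\ \stackrel{\sim}{\longrightarrow}\ R\rho_{\ast}\mathcal{H}_{\PV},
\]
with the nontrivial cohomology sheaves sitting in degrees $0,1,3,\ldots,2c-3$ on both sides. Once the indices are fixed, your argument (and the identification of $\varphi$ with the connecting isomorphisms via cup product with powers of $h$) goes through and matches the paper's proof.
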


\begin{proof}
First of all, we note that there exists a fine resolutions
$$
\mathcal{H}_{Z}\oplus \bigoplus_{i=1}^{c-1}\mathbb{C}_{Z}[-2i+1]
\stackrel{\sim}\longrightarrow
\bigoplus_{i=0}^{c-1} \mathscr{L}_{Z}^{\bullet}(1-i,1-i)
$$
and hence we get the cohomology sheaf
\begin{equation}\label{coh-sheaf}
 \mathscr{H}^{j}\biggl(\bigoplus_{i=0}^{c-1}\mathscr{L}_{Z}^{\bullet}(1-i,1-i)[-2i]\biggr)=
\bigoplus_{i=0}^{c-1} \mathscr{H}^{j-2i}(\mathscr{L}_{Z}^{\bullet}(1-i,1-i))
\cong
\begin{cases}
\mathcal{H}_{Z},      & j=0; \\
 \mathbb{C}_{Z},     & j \text{ is odd}; \\
 0,     & \text{otherwise}.
\end{cases}
\end{equation}
To conclude the proof,
it suffices to show that the induced morphism of cohomology sheaves
\begin{equation}\label{ind-cohsh}
\mathscr{H}^{j}(\varphi):
\bigoplus_{i=0}^{c-1} \mathscr{H}^{j-2i}(\mathscr{L}_{Z}^{\bullet}(1-i,1-i))
\longrightarrow
 \mathscr{H}^{j}(R\rho_{\ast}\mathscr{L}_{\PV}^{\bullet}(1,1))
\end{equation}
is isomorphic for any integer $j\geq0$.
Since $\mathscr{L}_{\PV}^{\bullet}(1,1)$ is a fine resolution of $\mathcal{H}_{\PV}$, by definition, the $j$-th direct image of $\mathcal{H}_{\PV}$ is
$$
R^{j}\rho_{\ast}\mathcal{H}_{\PV}
\cong
\mathscr{H}^{j}(R\rho_{\ast}\mathscr{L}_{\PV}^{\bullet}(1,1))
=
\mathscr{H}^{j}(\rho_{\ast}\mathscr{L}_{\PV}^{\bullet}(1,1)).
$$
Our next goal is to compute the higher direct image of $\mathcal{H}_{\PV}$.
For this purpose, we consider the short exact sequence
\begin{equation*}
\xymatrix@C=0.5cm{
0
\ar[r]^{}&
\mathbb{C}_{\PV}
\ar[r]^{} &
\mathcal{O}_{\PV}\oplus  \bar{\mathcal{O}}_{\PV}
\ar[r]^{} &
\mathcal{H}_{\PV}
\ar[r]^{} &
0.}
\end{equation*}
Then there is a long exact sequence of higher direct images
\begin{equation}\label{exactseq-hdimg}
\begin{tikzcd}
 0\rar &  \rho_{\ast}\mathbb{C}_{\PV}
  \rar &   \rho_{\ast}(\mathcal{O}_{\PV}\oplus\bar{\mathcal{O}}_{\PV})
  \rar &  \rho_{\ast}\mathcal{H}_{\PV} \ar[out=-10, in=165]{dll} \\
  & R^{1}\rho_{\ast}\mathbb{C}_{\PV}
 \rar &  R^{1}\rho_{\ast}(\mathcal{O}_{\PV}\oplus\bar{\mathcal{O}}_{\PV})
 \rar & R^{1}\rho_{\ast}\mathcal{H}_{\PV}   \ar[out=-10, in=165]{dll} \\
  & R^{2}\rho_{\ast}\mathbb{C}_{\PV}
  \rar &  R^{2}\rho_{\ast}(\mathcal{O}_{\PV}\oplus\bar{\mathcal{O}}_{\PV})   \rar & \cdots.
\end{tikzcd}
\end{equation}
Since $R^{1}\rho_{\ast}\mathbb{C}_{\PV}=0$,
from \eqref{exactseq-hdimg}, we obtain a short exact sequence
$$
\xymatrix@C=0.5cm{
 0 \ar[r]^{} & \rho_{\ast}\mathbb{C}_{\PV}   \ar[r]^{} & \rho_{\ast}(\mathcal{O}_{\PV}\oplus\bar{\mathcal{O}}_{\PV})  \ar[r]^{} &  \rho_{\ast}\mathcal{H}_{\PV} \ar[r]^{} & 0.}
$$
Consider the following commutative diagram
\begin{equation}\label{dir-im-Har}
\vcenter{
\xymatrix@C=0.5cm{
 0 \ar[r]^{} &  \mathbb{C}_{Z}   \ar[d]_{\cong} \ar[r]^{} &  \mathcal{O}_{Z}\oplus\bar{\mathcal{O}}_{Z}   \ar[d]_{\cong} \ar[r]^{} &   \mathcal{H}_{Z}  \ar[d]_{} \ar[r]^{} & 0 \\
0 \ar[r]^{} & \rho_{\ast}\mathbb{C}_{\PV}   \ar[r]^{} & \rho_{\ast}(\mathcal{O}_{\PV}\oplus\bar{\mathcal{O}}_{\PV})  \ar[r]^{} &  \rho_{\ast}\mathcal{H}_{\PV} \ar[r]^{} & 0.}}
\end{equation}
Because both the first and second verticals in \eqref{dir-im-Har} are isomorphic,
so is the third one and hence $\mathcal{H}_{Z}\cong \rho_{\ast}\mathcal{H}_{\PV}$.
Moreover,
since $R^{j}\rho_{\ast}(\mathcal{O}_{\PV}\oplus\bar{\mathcal{O}}_{\PV})=0$ for any $j\geq 1$, we have
$$
R^{j}\rho_{\ast}\mathcal{H}_{\PV}
\cong R^{j+1}\rho_{\ast}\mathbb{C}_{\PV}\,\,\,\mathrm{for}\,\,\, j\geq 1.
$$

Finally, we show that \eqref{ind-cohsh} is isomorphic.
Choose an open subset small enough (e.g., a small open polydisc) $W\subset Z$.
According to the K\"{u}nneth formula for de Rham cohomology,
we obtain
$$
R^{j}\rho_{\ast}\mathcal{H}_{\PV}(W)
\cong R^{j+1}\rho_{\ast}\mathbb{C}_{\PV}(W)
=H_{dR}^{j+1}(W\times \mathbb{P}^{c-1}; \mathbb{C})
\cong
H_{dR}^{0}(W; \mathbb{C})\otimes H_{dR}^{j+1}(\mathbb{P}^{c-1}; \mathbb{C}).
$$
Note that $H_{dR}^{j+1}(\mathbb{P}^{c-1}; \mathbb{C})=0$ if $j$ is even;
and $H_{dR}^{j+1}(\mathbb{P}^{c-1}; \mathbb{C})$ is generated by the restriction of $h^{k}$ for $j=2k-1$.
Therefore, we derive
$$
R^{j}\rho_{\ast}\mathcal{H}_{\PV}
\cong
\begin{cases}
\mathcal{H}_{Z},     & j=0; \\
\mathbb{C}_{Z},     & j \, \text{is odd}; \\
0,      & \text{otherwise}.
\end{cases}
$$
Combining \eqref{coh-sheaf} finishes the proof.
\end{proof}

Based on Lemma \ref{(1,1)-projbundle-formula},
we have the following general result.

\begin{prop}\label{(p,q)-projbundle-formula}
There is a canonical quasi-isomorphism of sheaf complexes
$$
\sum_{i=0}^{c-1} h^{i}\wedge \rho^{\star}:
\bigoplus_{i=0}^{c-1} \mathscr{L}_{Z}^{\bullet}(p-i,q-i)[-2i]
\stackrel{\sim}\longrightarrow
R\rho_{\ast}\mathscr{L}_{\PV}^{\bullet}(p,q)
$$
on $Z$.
\end{prop}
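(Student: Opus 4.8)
The plan is to deduce the statement from Lemma \ref{(1,1)-projbundle-formula} together with the classical projective bundle formulas for the constant sheaf and for the (anti-)holomorphic forms, by d\'evissage along the Bott--Chern complex. By Lemma \ref{equal-cohom-BCcomplex} we may pass to the Bott--Chern complexes: it is equivalent to show that
\[
\sum_{i=0}^{c-1} h^{i}\wedge\rho^{\star}\colon\ \bigoplus_{i=0}^{c-1}\mathscr{B}_{Z}^{\bullet}(p-i,q-i)[-2i]\ \longrightarrow\ R\rho_{\ast}\mathscr{B}_{\PV}^{\bullet}(p,q)
\]
is an isomorphism in $D(Z)$. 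Since every term of $\mathscr{L}_{\PV}^{\bullet}(p,q)$ is a fine, hence soft, sheaf and $\rho$ is proper, $R\rho_{\ast}\mathscr{L}_{\PV}^{\bullet}(p,q)$ is represented by the termwise direct image $\rho_{\ast}\mathscr{L}_{\PV}^{\bullet}(p,q)$, and $\sum_{i}h^{i}\wedge\rho^{\star}$ — pull-back of forms wedged with powers of a fixed $d$-closed $(1,1)$-form representing $h$ — is an honest morphism of complexes of sheaves; so the real task is to compute the cohomology sheaves of both sides and to check that the map matches them up.

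The organizing principle is the short exact sequence \eqref{s-e-bcs-d}. Applied on $\PV$ it reads
\[
0\to\bar{\Omega}_{\PV}^{\bullet<q}[-1]\to\mathscr{B}_{\PV}^{\bullet}(p,q)\to\mathbb{C}_{\PV}(p)\to0,
\]
and summing \eqref{s-e-bcs-d} for the bidegrees $(p-i,q-i)$ after the shifts $[-2i]$ gives the analogous sequence on $Z$. The operation $\sum_{i}h^{i}\wedge\rho^{\star}$ is strictly compatible with these sequences, and with every other natural map that will appear — pull-backs, the operators $d,\partial,\bar{\partial},\partial\bar{\partial}$, the projections onto subcomplexes of fixed bidegree type, and complex conjugation — because $\rho^{\star}$ preserves the bigrading and $h$ is represented by a closed form of type $(1,1)$. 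Hence, by the five lemma applied to the long exact sequences of cohomology sheaves, it suffices to prove the two ``end'' projective bundle formulas, namely that $\sum_{i}h^{i}\wedge\rho^{\star}$ realizes
\[
R\rho_{\ast}\bar{\Omega}_{\PV}^{\bullet<q}\cong\bigoplus_{i=0}^{c-1}\bar{\Omega}_{Z}^{\bullet<q-i}[-2i],\qquad R\rho_{\ast}\mathbb{C}_{\PV}(p)\cong\bigoplus_{i=0}^{c-1}\mathbb{C}_{Z}(p-i)[-2i],
\]
where, as in Lemma \ref{(1,1)-projbundle-formula}, the summands with $q-i\leq0$ (resp. $p-i\leq0$) are interpreted as the degenerate complexes $\bar{\Omega}_{Z}^{\bullet<m}=0$ (resp. $\mathbb{C}_{Z}(m)=\mathbb{C}_{Z}$, up to shift).

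These two formulas follow from a further d\'evissage. Using \eqref{c-dol-0}, $\mathbb{C}_{\PV}(p)$ is quasi-isomorphic, up to a degree shift, to the truncated holomorphic de Rham complex $\Omega_{\PV}^{\bullet\geq p}$; and both $\Omega_{\PV}^{\bullet\geq p}$ and $\bar{\Omega}_{\PV}^{\bullet<q}$ are finite iterated extensions, via their brutal truncations, of the sheaves $\mathcal{O}_{\PV},\Omega_{\PV}^{1},\dots$ respectively $\bar{\mathcal{O}}_{\PV},\bar{\Omega}_{\PV}^{1},\dots$. Applying $R\rho_{\ast}$ to the resulting short exact sequences and feeding in the Hodge-theoretic projective bundle formula $R\rho_{\ast}\Omega_{\PV}^{s}\cong\bigoplus_{i=0}^{c-1}\Omega_{Z}^{s-i}[-i]$ (and its conjugate for $\bar{\Omega}_{\PV}^{s}$), the vanishing $R^{j}\rho_{\ast}\mathcal{O}_{\PV}=R^{j}\rho_{\ast}\bar{\mathcal{O}}_{\PV}=0$ for $j\geq1$, and the fact that wedging with $h$ raises the Hodge (resp. conjugate) filtration level by one — all already used in the proof of Lemma \ref{(1,1)-projbundle-formula} — one obtains the two ``end'' formulas by induction on the length of the truncation and another application of the five lemma. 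Assembling everything yields the quasi-isomorphism for $\mathscr{B}_{\PV}^{\bullet}(p,q)$, and undoing the shift of Lemma \ref{equal-cohom-BCcomplex} gives the statement for $\mathscr{L}_{\PV}^{\bullet}(p,q)$.

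I expect the only real difficulty to be bookkeeping rather than geometry: the geometric input — the Leray--Hirsch structure of $H^{\bullet}(\mathbb{P}^{c-1})$ and the vanishing of $R^{>0}\rho_{\ast}$ of the relevant coherent sheaves away from the classes $h^{i}$ — is entirely classical and already present in the proof of Lemma \ref{(1,1)-projbundle-formula}. The care is needed in keeping the cohomological shifts $[-2i]$ consistent with the internal gradings of the truncated de Rham complexes (and with the shift relating $\mathbb{C}_{X}(p)$ and $\Omega_{X}^{\bullet\geq p}$), in checking that $\sum_{i}h^{i}\wedge\rho^{\star}$ is strictly compatible with each short exact sequence in the d\'evissage so that the five-lemma arguments close, and in verifying that the degenerate summands — those $i$ with $p-i\leq0$ or $q-i\leq0$, where $\mathscr{B}_{Z}^{\bullet}(p-i,q-i)$ collapses to a shift of a constant-sheaf or truncated-de-Rham complex, exactly as $\mathscr{L}_{Z}^{\bullet}(1-i,1-i)=\mathcal{A}_{Z}^{\bullet}[1]$ in the $(1,1)$ case — are correctly absorbed into the same argument.
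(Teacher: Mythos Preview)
Your approach is correct but takes a genuinely different route from the paper's. The paper proves the statement by a double induction on the bidegree: it first establishes the diagonal case $(p,p)$ (Lemma~\ref{(p,p)-projbundle-formula}) by induction on $p$ using the short exact sequences
\[
0\longrightarrow(\Omega_{\PV}^{m}\oplus\bar{\Omega}_{\PV}^{m})[-m]\longrightarrow\mathscr{S}^{\bullet}_{\PV}(m+1,m+1)\longrightarrow\mathscr{S}^{\bullet}_{\PV}(m,m)\longrightarrow0,
\]
carrying out explicit stalk-by-stalk computations of the higher direct images (with separate case analyses according to whether $p>c$ or $p\leq c$), and then inducts on $q$ for fixed $p$ via the sequences $0\to\bar{\Omega}_{\PV}^{m}[-m]\to\mathscr{S}^{\bullet}_{\PV}(p,m+1)\to\mathscr{S}^{\bullet}_{\PV}(p,m)\to0$. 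Your d\'evissage is orthogonal to this: rather than peeling off one holomorphic and one anti-holomorphic degree at a time, you use \eqref{s-e-bcs-d} to separate the holomorphic and anti-holomorphic contributions in a single stroke, reducing immediately to the projective bundle formulas for $\mathbb{C}_{\PV}(p)$ and for $\bar{\Omega}_{\PV}^{\bullet<q}$ --- which are essentially Proposition~\ref{dirim-truncated-deRham} and its conjugate (cf.~Remark~\ref{pro-D}). Your route is more conceptual and bypasses the diagonal Lemma~\ref{(p,p)-projbundle-formula} entirely, at the cost of relying on the truncated-de-Rham formula as a black box; the paper's route is more self-contained and makes the structure of the cohomology sheaves $R^{j}\rho_{\ast}\mathscr{S}^{\bullet}_{\PV}(p,q)$ visible, at the cost of the rather long stalk computations in Case~2 of Lemma~\ref{(p,p)-projbundle-formula}. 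The compatibility of $\sum_{i}h^{i}\wedge\rho^{\star}$ with all the short exact sequences --- which you correctly flag as the point needing care --- is handled in the paper by working throughout with the $\mathscr{S}^{\bullet}$ model, where the inductive sequences are genuine short exact sequences of complexes and the coboundary maps are identified with $\partial\oplus\bar{\partial}$; in your approach the same verification goes through because $h$ is represented by a $d$-closed $(1,1)$-form and hence $h^{i}\wedge\rho^{\star}$ respects every bidegree-defined subcomplex of the Dolbeault double complex.
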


We present the proof of Proposition \ref{(p,q)-projbundle-formula} in Appendix \ref{appB},
which uses the same arguments as in Lemma \ref{(1,1)-projbundle-formula}.
In particular, a direct consequence of Proposition \ref{(p,q)-projbundle-formula} is the projective bundle formula of Bott--Chern hypercohomology.

\begin{cor}\label{cohom-BCcomplex-projbundle}
There is a canonical isomorphism of Bott--Chern hypercohomology
$$
\sum_{i=0}^{c-1} h^{i}\wedge\rho^{\star}(-):
\bigoplus_{i=0}^{c-1}H_{BC}^{k-2i}(Z, \mathbb{C}(p-i,q-i))
\stackrel{\simeq}\longrightarrow
H_{BC}^{k}(\PV, \mathbb{C}(p,q)).
$$
for any $k\in \mathbb{N}$.
In particular, if $k=p+q$, we get
$$
\sum_{i=0}^{c-1} h^{i}\wedge\rho^{\star}(-):
\bigoplus_{i=0}^{c-1}H_{BC}^{p-i,q-i}(Z)
\stackrel{\simeq}\longrightarrow
H_{BC}^{p,q}(\PV).
$$
\end{cor}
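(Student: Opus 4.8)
The plan is to deduce the Corollary directly from Proposition \ref{(p,q)-projbundle-formula} by passing to hypercohomology; the only real work is the bookkeeping of the degree shift relating the indexing of Bott--Chern hypercohomology to that of the fine complex $\mathscr{L}^{\bullet}_{X}(p,q)$.

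First I would apply the derived global sections functor $R\Gamma(Z,-)$ to the quasi-isomorphism
$$
\varphi=\sum_{i=0}^{c-1} h^{i}\wedge \rho^{\star}:\
\bigoplus_{i=0}^{c-1} \mathscr{L}_{Z}^{\bullet}(p-i,q-i)[-2i]
\stackrel{\sim}{\longrightarrow}
R\rho_{\ast}\mathscr{L}_{\PV}^{\bullet}(p,q)
$$
furnished by Proposition \ref{(p,q)-projbundle-formula}. A quasi-isomorphism induces isomorphisms on all hypercohomology groups; moreover hypercohomology carries a finite direct sum of shifted complexes to the corresponding direct sum of shifted hypercohomology groups, and $R\Gamma(\PV,-)=R\Gamma(Z,-)\circ R\rho_{\ast}$ (the Leray identity, i.e. composition of derived functors). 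Hence for every $m\in\mathbb{Z}$ we obtain an isomorphism
$$
\bigoplus_{i=0}^{c-1}\mathbb{H}^{m-2i}\bigl(Z,\mathscr{L}_{Z}^{\bullet}(p-i,q-i)\bigr)
\stackrel{\simeq}{\longrightarrow}
\mathbb{H}^{m}\bigl(\PV,\mathscr{L}_{\PV}^{\bullet}(p,q)\bigr).
$$
One may also observe that the terms of $\mathscr{L}^{\bullet}_{\PV}$ are fine, hence soft, hence $\rho_{\ast}$-acyclic since $\rho$ is proper, so that $R\rho_{\ast}$ is computed termwise; but this is not needed for the argument.

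Next I would translate this into Bott--Chern hypercohomology via Lemma \ref{equal-cohom-BCcomplex}, which gives $H^{k}_{BC}(X,\mathbb{C}(p,q))\cong\mathbb{H}^{k-1}(X,\mathscr{L}_{X}^{\bullet}(p,q))$ for every complex manifold $X$. Taking $m=k-1$ in the displayed isomorphism and using $\mathbb{H}^{k-1-2i}(Z,\mathscr{L}_{Z}^{\bullet}(p-i,q-i))\cong H^{k-2i}_{BC}(Z,\mathbb{C}(p-i,q-i))$ yields exactly
$$
\bigoplus_{i=0}^{c-1}H_{BC}^{k-2i}(Z, \mathbb{C}(p-i,q-i))
\stackrel{\simeq}{\longrightarrow}
H_{BC}^{k}(\PV, \mathbb{C}(p,q)).
$$
To see that the induced map is literally $\sum_{i}h^{i}\wedge\rho^{\star}(-)$, I would note that the sheaf-level morphism $\varphi$ is by construction $\rho^{\star}$ followed by wedging with a fixed $d$-closed representative of $h^{i}$ (so it is a genuine morphism of the fine complexes $\mathscr{L}^{\bullet}$ lifting cup product), and that the identification of Lemma \ref{equal-cohom-BCcomplex} is compatible with pullback and with the wedge action of $d$-closed forms. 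Finally, for $k=p+q$ I would invoke \eqref{hyper-bc} to rewrite $H^{(p-i)+(q-i)}_{BC}(Z,\mathbb{C}(p-i,q-i))\cong H^{p-i,q-i}_{BC}(Z)$ and $H^{p+q}_{BC}(\PV,\mathbb{C}(p,q))\cong H^{p,q}_{BC}(\PV)$, which gives the second displayed isomorphism.

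I do not expect a substantive obstacle: all the real content is in Proposition \ref{(p,q)-projbundle-formula}, proved in Appendix \ref{appB}. The step that most deserves care is the index arithmetic reconciling the three indexings (the $\mathbb{H}^{\bullet}$ of $\mathscr{L}^{\bullet}$, the $H^{\bullet}_{BC}(-,\mathbb{C}(p,q))$, and $H^{p,q}_{BC}$), together with the verification that the abstract hypercohomology isomorphism is genuinely implemented by the cohomological operation $\sum_{i}h^{i}\wedge\rho^{\star}(-)$ rather than merely being abstractly isomorphic to such a direct sum; this follows once one records that $h$ admits a global $d$-closed $(1,1)$-form representative, so that wedging with $h^{i}$ is an honest chain map on the fine complexes.
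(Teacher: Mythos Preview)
Your proposal is correct and matches the paper's treatment: the paper states this Corollary as ``a direct consequence of Proposition~\ref{(p,q)-projbundle-formula}'' without giving a separate proof, and your argument---applying $R\Gamma(Z,-)$ to the quasi-isomorphism, using $R\Gamma(\PV,-)=R\Gamma(Z,-)\circ R\rho_{\ast}$, and translating via Lemma~\ref{equal-cohom-BCcomplex} with the shift $m=k-1$---is exactly the intended unpacking. Your care in checking that the map is genuinely $\sum_i h^i\wedge\rho^\star$ (via a $d$-closed $(1,1)$-form representative of $h$) is appropriate and more explicit than anything the paper records.
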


\begin{rem}
In the proof of \cite[Theorem 1.2]{YY20}, we did not obtain an explicit presentation for the Bott--Chern cohomology of a projective bundle.
We conjectured an explicit Bott--Chern projective bundle formula in \cite{YY20} and \cite{RYY19}.
This formula was later confirmed by Stelzig using a structure theory of double complexes developed in \cite{Ste21,Ste21b}.
\end{rem}


\subsection{Blow-up formulae}\label{blf}
Let us first discuss the behavior of Bott--Chern hypercohomology of compact complex manifolds under modifications.
Recall that a {\it proper modification} is a proper holomorphic map $f:Y\rightarrow X$ of complex spaces of the same dimension satisfying:
\begin{itemize}
\item there exists an analytic subset $S\subset X$ of codimension $\geq2$ such that the restriction
$$
f:Y\setminus f^{-1}(S)\longrightarrow X\setminus S
$$
is biholomorphic.
\end{itemize}

\begin{prop}\label{cohom-BCcomplex-injective}
Let $f: Y\longrightarrow X$ be a modification of compact complex manifolds.
Then the pullback of differential forms induces an injective map
\begin{equation}\label{inj-X-Y}
f^{\star}:H_{BC}^{k}(X,\mathbb{C}(p,q))
\longrightarrow H_{BC}^{k}(Y,\mathbb{C}(p,q))
\end{equation}
for any $k\in \mathbb{Z}$.
\end{prop}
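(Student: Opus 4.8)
The plan is to imitate the classical argument that pullback on cohomology is injective for a modification, using a ``trace'' (pushforward) map that splits $f^\star$. First I would invoke the standard fact (see \cite{YY20}) that since $f$ is a modification between compact complex manifolds of the same dimension $n$, integration over the fibers gives a pushforward $f_\star$ on currents, and for any smooth form $\alpha$ on $X$ one has $f_\star f^\star \alpha = \alpha$ (as currents, hence as cohomology classes), because $f$ is generically one-to-one. The key point is that $f_\star$ respects the structure of the Bott--Chern complex well enough to descend to hypercohomology. Concretely, I would use the current-valued model of $\mathscr{L}_X^\bullet(p,q)$: each term $\mathscr{L}_X^l$ has a companion built from currents $\mathscr{D}_X^{s,t}$, the inclusion $\mathscr{L}_X^\bullet(p,q)\hookrightarrow (\text{current version})$ is a quasi-isomorphism (both resolve the same sheaf, e.g. $\mathcal{H}_X$ when $(p,q)=(1,1)$, by the analogue of Proposition \ref{pluri-har} for currents), and $f_\star$ maps the current complex on $Y$ to the one on $X$.

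The main steps, in order, are: (1) Set up the current-valued Bott--Chern complexes $\widehat{\mathscr{L}}_X^\bullet(p,q)$ and $\widehat{\mathscr{L}}_Y^\bullet(p,q)$, where we replace each $\mathcal{A}^{s,t}$ by $\mathscr{D}^{s,t}$, with the same operators $\delta_l$ (which make sense on currents: $d$, $\partial\bar\partial$, and the projected $d$ all act on currents). Check these are quasi-isomorphic to $\mathscr{L}_X^\bullet(p,q)[1]$ and $\mathscr{L}_Y^\bullet(p,q)[1]$ respectively — this follows because the current analogue of the Dolbeault/pluriharmonic resolutions still resolves the same kernel sheaves (the $\bar\partial$-Poincaré lemma holds for currents, and the relevant regularity statements are standard). (2) Verify that $f_\star$ is a morphism of these current complexes: one needs $f_\star \circ \delta_l^Y = \delta_l^X \circ f_\star$, which reduces to the projection formula and the commutation of $f_\star$ with $d$ and with $\partial, \bar\partial$ on currents, plus the fact that $f_\star$ preserves bidegree since $f$ is holomorphic of relative dimension $0$; the only subtlety is the projected differential $\mathrm{pr}\circ d$ in low degrees, but $f_\star$ commutes with the bidegree projections, so this is fine. (3) Conclude: on hypercohomology we get $f_\star: \mathbb{H}^k(Y,\mathscr{L}_Y^\bullet(p,q)) \to \mathbb{H}^k(X,\mathscr{L}_X^\bullet(p,q))$, equivalently $f_\star: H^{k}_{BC}(Y,\mathbb{C}(p,q)) \to H^{k}_{BC}(X,\mathbb{C}(p,q))$, and since $f_\star f^\star = \mathrm{id}$ on the level of (equivalence classes of) currents — because $f$ is biholomorphic outside a set of codimension $\geq 2$ and a smooth form pushed forward and pulled back differs from the original by a current supported in small codimension, which is then exact/negligible in the relevant cohomology — the composite $f_\star \circ f^\star$ is the identity on $H^k_{BC}(X,\mathbb{C}(p,q))$. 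Hence $f^\star$ is injective.

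I would carry out step (3)'s identity $f_\star f^\star = \mathrm{id}$ carefully: for $\alpha$ a $\delta$-closed representative, $f_\star f^\star\alpha$ and $\alpha$ agree as currents on $X\setminus S$, and their difference is a current supported on $S$ with $\mathrm{codim}_{\mathbb C} S\geq 2$; by a dimension/support argument (a closed current of the relevant bidegree supported on a high-codimension analytic set is zero, or at least $\delta$-exact after the quasi-isomorphism) this difference vanishes in hypercohomology. This support-and-regularity step is where the real work lies.

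The main obstacle I anticipate is precisely making rigorous that the current-valued Bott--Chern complex computes the same hypercohomology and that $f_\star$ is a genuine complex morphism on it — i.e., controlling the interaction of pushforward of currents with the \emph{non-standard} differentials ($\partial\bar\partial$ in the middle and the truncated/projected $d$ on either side). Once that bookkeeping is in place, injectivity is formal from $f_\star f^\star=\mathrm{id}$. An alternative, perhaps cleaner route — which I would mention but not pursue in detail — is to use the short exact sequence \eqref{s-e-bcs-d} together with the already-known blow-up injectivity for $H^\bullet_{dR}$ (via $\mathbb{C}_X(p)$, i.e. truncated holomorphic de Rham, which is a bimeromorphic-type statement proven elsewhere) and for the anti-holomorphic truncated complex $\bar\Omega_X^{\bullet<q}$ (by conjugation of the Hodge-theoretic blow-up injectivity of $H^\bullet(X,\Omega_X^p)$-pieces), and then chase the long exact sequences; but the current-theoretic argument is more self-contained and parallels \cite{YY20} directly, so that is the approach I would write up.
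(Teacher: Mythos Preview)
Your proposal is correct and follows essentially the same route as the paper: introduce the current-valued analogue of $\mathscr{L}_X^\bullet(p,q)$ (the paper calls it $\mathscr{C}_X^\bullet(p,q)$), invoke that the inclusion of forms into currents is a quasi-isomorphism (the paper cites \cite[Lemma VI.12.1]{Dem12}), and use that $f_\star\circ f^\star=\mathrm{id}$ on currents since $f$ is a proper modification of degree $1$, yielding the commutative square that forces $f^\star$ to be injective. One simplification: your step~(3) support-and-regularity argument is unnecessary --- for a degree-$1$ proper modification the identity $f_\star f^\star\alpha=\alpha$ holds \emph{exactly} as currents for every smooth form $\alpha$ (this is precisely what the paper cites from \cite[Theorem 12.9]{Dem12} and \cite[Lemma 2.2]{Wel74}), so there is no error term to control.
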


\begin{proof}
Following the steps in the construction of the sheaf complex $\mathscr{L}_{X}^{\bullet}(p,q)$,
we can define a new sheaf complex $\mathscr{C}_{X}^{\bullet}(p,q)$ which consists of the sheaves of currents $\mathcal{D}_{X}^{s,t}$.
It is noteworthy that there exists a natural inclusion
$\tau:\mathscr{L}_{X}^{\bullet}(p,q)
\hookrightarrow\mathscr{C}_{X}^{\bullet}(p,q)$ which is a quasi-isomorphism of sheaf complexes, see \cite[Lemma VI.12.1]{Dem12}.
This implies that $\tau$ yields an isomorphism of hypercohomology groups
$$
\tau:\mathbb{H}^{k-1}(X, \mathscr{L}_{X}^{\bullet}(p,q))
\stackrel{\simeq}\longrightarrow
\mathbb{H}^{k-1}(X, \mathscr{C}_{X}^{\bullet}(p,q)).
$$
Likewise, consider the inclusion
$\mu:\mathscr{L}_{Y}^{\bullet}(p,q)
\hookrightarrow\mathscr{C}_{Y}^{\bullet}(p,q)$ and then we get the isomorphism
$$
\mu:\mathbb{H}^{k-1}(Y, \mathscr{L}_{Y}^{\bullet}(p,q))
\stackrel{\simeq}\longrightarrow
\mathbb{H}^{k-1}(Y, \mathscr{C}_{Y}^{\bullet}(p,q)).
$$
Since $f$ is a proper modification and hence its degree is $1$,
so $\tau(\alpha)=f_{\star}\circ \mu \circ f^{\star}(\alpha)$
for each smooth differential form $\alpha$ on $X$,
see the proof of \cite[Theorem 12.9]{Dem12} or \cite[Lemma 2.2]{Wel74}.
In particular, we have a commutative diagram
\begin{equation}\label{form-current-comdiagram1}
\vcenter{
\xymatrix{
  \mathbb{H}^{k-1}(X, \mathscr{L}_{X}^{\bullet}(p,q)) \ar[d]_{f^{\star}} \ar[r]_{\tau}^{\simeq} & \mathbb{H}^{k-1}(X, \mathscr{C}_{X}^{\bullet}(p,q))   \\
  \mathbb{H}^{k-1}(Y, \mathscr{L}_{Y}^{\bullet}(p,q))
  \ar[r]_{\mu }^{\simeq}
  & \mathbb{H}^{k-1}(Y, \mathscr{C}_{Y}^{\bullet}(p,q)) \ar[u]^{f_{\star}} .}}
\end{equation}
The commutativity of \eqref{form-current-comdiagram1} implies that the morphism
$$
f^{\star}: \mathbb{H}^{k-1}(X,\mathscr{L}_{X}^{\bullet}(p,q))
\longrightarrow \mathbb{H}^{k-1}(Y,\mathscr{L}_{Y}^{\bullet}(p,q))
$$
is injective and so is the morphism \eqref{inj-X-Y} by Lemma \ref{equal-cohom-BCcomplex}.
\end{proof}

As a special case, the blow-up morphism of compact complex manifolds is one of the most important proper modifications.
Assume that $\imath:Z\hookrightarrow X$ is a closed complex submanifold with complex codimension $c\geq 2$.
Let $\pi:\tilde{X}\rightarrow X$ be the blow-up of $X$ along $Z$ and $E:=\pi^{-1}(Z)$ the exceptional divisor.
Then there is a commutative blow-up diagram
\begin{equation}\label{bld}
\vcenter{
\xymatrix@=1.5cm{
E \ar[d]_{\rho} \ar@{^{(}->}[r]^{\tilde{\iota}} & \tilde{X}\ar[d]^{\pi}\\
 Z \ar@{^{(}->}[r]^{\iota} & X,}}
\end{equation}
where $\tilde{\iota}: E \hookrightarrow \tilde{X}$ is the inclusion.

Consider the pair $(X,Z)$.
We define a relative version of the Bott--Chern complex.
The {\it relative Bott--Chern complex} with respect to $Z$,
denoted by $\mathscr{B}_{X,Z}^{\bullet}(p,q)$, is defined to be the sheaf complex:
\begin{equation*}\label{rel-BC-complex2}
\small{
\xymatrix@C=0.4cm{
0\ar[r]^{} & \mathbb{C}_{X,Z}\ar[r]^{} & \mathcal{K}_{X,Z}^{0}\oplus \bar{\mathcal{K}}_{X,Z}^{0} \ar[r]^{} &\mathcal{K}_{X,Z}^{1}\oplus \bar{\mathcal{K}}_{X,Z}^{1}
  \ar[r]^{}& \cdots  \ar[r]^{}& \mathcal{K}_{X,Z}^{p-1}\oplus \bar{\mathcal{K}}_{X,Z}^{p-1} \ar[r]^{} & \bar{\mathcal{K}}_{X,Z}^{p} \ar[r]^{} & \cdots \ar[r]^{} & \bar{\mathcal{K}}_{X,Z}^{q-1} \ar[r]^{} & 0,}}
\end{equation*}
where $\mathcal{K}_{X,Z}^{k}$ is the $k$-the relative Dolbeault sheaf (see Definition \ref{defn-relDol}).
From \eqref{rela-const-sheaf-exact} and \eqref{exact-purerelDolsh},
we get a short exact sequence of sheaf complexes
\begin{equation}\label{pre-split-blwp-BCcomplex}
\xymatrix@C=0.5cm{
0 \ar[r] & \mathscr{B}_{X,Z}^{\bullet}(p,q)
\ar[r] & \mathscr{B}_{X}^{\bullet}(p,q)
\ar[r]^{\imath^{\star}} &  \imath_{\ast}\mathscr{B}_{Z}^{\bullet}(p,q) \ar[r] & 0.}
\end{equation}
Likewise, we can define the relative version of $\mathscr{L}_{X}^{\bullet}(p,q)$ by replacing $\mathcal{A}_{X}^{s,t}$ with $\mathcal{K}_{X,Z}^{s,t}$, and denote it by $\mathscr{L}_{X,Z}^{\bullet}(p,q)$.
Without any essential changes in the proof of Lemma \ref{equal-cohom-BCcomplex}, we get a relative version of Lemma \ref{equal-cohom-BCcomplex}, i.e. there exists an isomorphism
\begin{equation}\label{equal-cohom-relBCcomplex}
\mathbb{H}^{l}(X,\mathscr{B}_{X,Z}^{\bullet}(p,q))\cong \mathbb{H}^{l-1}(X,\mathscr{L}_{X,Z}^{\bullet}(p,q)),
\end{equation}
for any $l\in \mathbb{Z}$.
Similar to \eqref{pre-split-blwp-BCcomplex} and \eqref{equal-cohom-relBCcomplex},
for the pair $(\tilde{X}, E)$,
there exist a short exact sequence of sheaf complexes
\begin{equation*}\label{E-pre-split-blwp-BCcomplex}
\xymatrix@C=0.5cm{
0 \ar[r] & \mathscr{B}_{\tilde{X},E}^{\bullet}(p,q)
\ar[r] & \mathscr{B}_{\tilde{X}}^{\bullet}(p,q)
\ar[r]^{\tilde{\imath}^{\star}} &  \tilde{\imath}_{\ast}\mathscr{B}_{E}^{\bullet}(p,q) \ar[r] & 0,}
\end{equation*}
and an isomorphism
\begin{equation*}\label{E-equal-cohom-relBCcomplex}
\mathbb{H}^{k}(\tilde{X},\mathscr{B}_{\tilde{X},E}^{\bullet}(p,q))
\cong
\mathbb{H}^{k-1}(\tilde{X},\mathscr{L}_{\tilde{X},E}^{\bullet}(p,q)),
\end{equation*}
for any $k\in \mathbb{Z}$.
The following lemma plays an important role in the proof of blow-up formulae.
\begin{lem}\label{Derived-dirim-kershcom}
There is an isomorphism
\begin{equation}\label{iso0}
\pi^{\star}:
\mathscr{B}_{X,Z}^{\bullet}(p,q)
\stackrel{\simeq}\longrightarrow
R\pi_{\ast} \mathscr{B}_{\tilde{X},E}^{\bullet}(p,q)
\end{equation}
in the derived category of sheaves of $\mathbb{C}$-modules on $X$.
In particular, for any $k\in \mathbb{Z}$,
the induced morphism
\begin{equation}\label{iso1}
\pi^{\star}:
\mathbb{H}^{k}(X, \mathscr{B}_{X,Z}^{\bullet}(p,q))
\stackrel{\simeq}\longrightarrow
\mathbb{H}^{k}(\tilde{X}, \mathscr{B}_{\tilde{X},E}^{\bullet}(p,q))
\end{equation}
is isomorphic.
\end{lem}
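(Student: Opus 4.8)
The plan is to prove the isomorphism \eqref{iso0} in the derived category by reducing it, via the relative analogue of Lemma \ref{equal-cohom-BCcomplex} (isomorphism \eqref{equal-cohom-relBCcomplex}), to a statement about the fine sheaf complexes $\mathscr{L}_{X,Z}^{\bullet}(p,q)$ and $\mathscr{L}_{\tilde X,E}^{\bullet}(p,q)$, and then to a computation of the higher direct images $R^{j}\pi_{\ast}\mathscr{L}_{\tilde X,E}^{\bullet}(p,q)$. Since $\mathscr{L}_{\tilde X,E}^{\bullet}(p,q)$ is a complex of fine sheaves, $R\pi_{\ast}\mathscr{L}_{\tilde X,E}^{\bullet}(p,q)$ is represented by the honest pushforward complex $\pi_{\ast}\mathscr{L}_{\tilde X,E}^{\bullet}(p,q)$, so it suffices to show that the natural pullback morphism $\pi^{\star}:\mathscr{L}_{X,Z}^{\bullet}(p,q)\to\pi_{\ast}\mathscr{L}_{\tilde X,E}^{\bullet}(p,q)$ is a quasi-isomorphism of sheaf complexes on $X$. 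Because $\pi$ is an isomorphism over $X\setminus Z$, this is automatic away from $Z$; the content is entirely local near a point of $Z$, where the blow-up looks like the product of a polydisc with the blow-up of $\mathbb{C}^{c}$ at the origin, and $\rho:E\to Z$ is the projective bundle $\mathbb{P}(N_{Z/X})\to Z$.

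The key steps, in order, would be: (1) Record that by flatness/fineness considerations and the relative version of Lemma \ref{equal-cohom-BCcomplex} it is enough to establish $\pi^{\star}:\mathscr{L}_{X,Z}^{\bullet}(p,q)\stackrel{\sim}\to R\pi_{\ast}\mathscr{L}_{\tilde X,E}^{\bullet}(p,q)$. (2) Use the relative Dolbeault machinery of Appendix \ref{appA}: the relative sheaves $\mathcal{K}_{X,Z}^{s,t}$ fit into the exact sequences \eqref{rela-const-sheaf-exact} and \eqref{exact-purerelDolsh} that define $\mathscr{B}_{X,Z}^{\bullet}(p,q)$ as the ``kernel'' complex $\ker(\imath^{\star}:\mathscr{B}_X^{\bullet}(p,q)\to\imath_{\ast}\mathscr{B}_Z^{\bullet}(p,q))$, and similarly for $(\tilde X,E)$; this gives a morphism of short exact sequences of sheaf complexes between \eqref{pre-split-blwp-BCcomplex} and its $(\tilde X,E)$-analogue, compatible with $\pi^{\star}$ and $\rho^{\star}$. (3) Apply $R\pi_{\ast}$ to this morphism of short exact sequences: by the absolute statement one expects $\pi^{\star}:\mathscr{B}_X^{\bullet}(p,q)\to R\pi_{\ast}\mathscr{B}_{\tilde X}^{\bullet}(p,q)$ to be understood via Corollary \ref{cohom-BCcomplex-projbundle} and Proposition \ref{(p,q)-projbundle-formula}, while on the divisor side one has $R\pi_{\ast}(\tilde\imath_{\ast}\mathscr{B}_E^{\bullet}(p,q))=\imath_{\ast}R\rho_{\ast}\mathscr{B}_E^{\bullet}(p,q)$, and Proposition \ref{(p,q)-projbundle-formula} applied to the projective bundle $\rho:E=\mathbb{P}(N_{Z/X})\to Z$ computes the latter as $\imath_{\ast}\bigl(\bigoplus_{i=0}^{c-1}\mathscr{B}_Z^{\bullet}(p-i,q-i)[-2i]\bigr)$. (4) Compare the two long exact sequences of higher direct images via the five lemma: the ``extra'' summands $i=1,\dots,c-1$ appearing in $R\rho_{\ast}$ on the $E$-side exactly account for the excess of $R\pi_{\ast}\mathscr{B}_{\tilde X}^{\bullet}(p,q)$ over $\mathscr{B}_X^{\bullet}(p,q)$, so they cancel and what remains is precisely $\pi^{\star}:\mathscr{B}_{X,Z}^{\bullet}(p,q)\stackrel{\sim}\to R\pi_{\ast}\mathscr{B}_{\tilde X,E}^{\bullet}(p,q)$. (5) Take hypercohomology to obtain \eqref{iso1}.

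I expect the main obstacle to be Step (4): making the cancellation of the ``projectivized normal bundle'' summands precise and natural. One has to check that under $\pi^{\star}$ the map $\mathscr{B}_X^{\bullet}(p,q)\to R\pi_{\ast}\mathscr{B}_{\tilde X}^{\bullet}(p,q)$ is, up to the canonical identifications, the inclusion of the $i=0$ summand, and that the restriction $\tilde\imath^{\star}$ on $\tilde X$ is compatible with the decomposition furnished by Proposition \ref{(p,q)-projbundle-formula} for $\rho$; in other words, one needs the commutativity of a diagram relating $h\wedge\rho^{\star}$ on $E$ with the class of the exceptional divisor and the Euler-sequence description of $N_{E/\tilde X}=\mathcal{O}_{\mathbb{P}(N)}(-1)$. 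This is where the bidegree bookkeeping in the definition of $\mathscr{L}^{\bullet}(p,q)$ is delicate, but it is a formal consequence of the projective bundle formula once the maps are set up carefully; alternatively, one can bypass it by arguing entirely locally on $X$, where $R\pi_{\ast}\mathscr{B}_{\tilde X,E}^{\bullet}(p,q)$ can be computed on $\mathrm{Bl}_0\mathbb{C}^c\times W$ directly, and checking that its cohomology sheaves vanish except in the range matching $\mathscr{B}_{X,Z}^{\bullet}(p,q)$, with the map an isomorphism there. Once \eqref{iso0} is in hand, \eqref{iso1} follows immediately by applying $\mathbb{H}^{k}(X,-)$ and the Leray spectral sequence / composition $R\Gamma(X,-)\circ R\pi_{\ast}=R\Gamma(\tilde X,-)$.
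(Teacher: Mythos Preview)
Your proposed route has a circularity problem at Step~(3). You want to ``understand'' $\pi^{\star}:\mathscr{B}_X^{\bullet}(p,q)\to R\pi_{\ast}\mathscr{B}_{\tilde X}^{\bullet}(p,q)$ via Corollary~\ref{cohom-BCcomplex-projbundle} and Proposition~\ref{(p,q)-projbundle-formula}, but those results concern the projective bundle $\rho:\mathbb{P}(\mathcal{V})\to Z$, not the blow-up $\pi:\tilde X\to X$, which is \emph{not} a projective bundle. The only way in the paper to access $R\pi_{\ast}\mathscr{B}_{\tilde X}^{\bullet}(p,q)$ is Theorem~\ref{main-thm1}, whose proof \emph{uses} Lemma~\ref{Derived-dirim-kershcom} as an input. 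So your five-lemma cancellation in Step~(4) cannot get off the ground without assuming what you are trying to prove. Your fallback local computation on $\mathrm{Bl}_0\mathbb{C}^c\times W$ could in principle be made to work, but it is not developed and would amount to redoing much of Appendix~\ref{appB} in a relative setting.

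The paper's argument is both simpler and avoids this trap by slicing the relative Bott--Chern complex in the \emph{other} direction. Rather than the ``horizontal'' sequence \eqref{pre-split-blwp-BCcomplex} relating $(X,Z)$ to $X$ and $Z$, the paper uses the ``vertical'' sequence
\[
0\longrightarrow \mathscr{D}_{X,Z}^{\bullet}(p,q)[-1]\longrightarrow \mathscr{B}_{X,Z}^{\bullet}(p,q)\longrightarrow \mathbb{C}_{X,Z}\longrightarrow 0,
\]
where $\mathscr{D}_{X,Z}^{\bullet}(p,q)=\mathcal{K}_{X,Z}^{\bullet<p}\oplus\bar{\mathcal{K}}_{X,Z}^{\bullet<q}$, and the analogous sequence for $(\tilde X,E)$. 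The point is that for each of the two outer terms the desired isomorphism under $R\pi_{\ast}$ is already established in Appendix~\ref{appA} (Corollary~\ref{derived-im-relative-constsh} for $\mathbb{C}_{X,Z}$ and Lemma~\ref{derived-im-relative-Dolsh} for $\mathscr{D}_{X,Z}^{\bullet}$), by the elementary observation that the relative sheaves $\mathbb{C}_{\tilde X,E}$ and $\mathcal{K}_{\tilde X,E}^{s}$ have \emph{vanishing} higher direct images under $\pi$: they are extensions by zero from the open set where $\pi$ is a biholomorphism. One then applies the two-of-three axiom in the derived category to the resulting morphism of distinguished triangles and concludes. No projective bundle formula, no cancellation of excess summands, is needed at this stage; those ingredients enter only later, in the proof of Theorem~\ref{main-thm1}.
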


\begin{proof}
Consider a new sheaf complex $\mathscr{D}_{X,Z}^{\bullet}(p,q)$ for $(X,Z)$ (similarly for $(\tilde{X},E)$):
\begin{equation*}
\xymatrix@C=0.4cm{
0\ar[r]^{} & \mathcal{K}_{X,Z}\oplus \bar{\mathcal{K}}_{X,Z} \ar[r]^{} &\mathcal{K}_{X,Z}^{1}\oplus \bar{\mathcal{K}}_{X,Z}^{1}
  \ar[r]^{}& \cdots  \ar[r]^{}& \mathcal{K}_{X,Z}^{p-1}\oplus \bar{\mathcal{K}}_{X,Z}^{p-1} \ar[r]^{} & \bar{\mathcal{K}}_{X,Z}^{p} \ar[r]^{} & \cdots   \ar[r]^{} & \bar{\mathcal{K}}_{X,Z}^{q-1} \ar[r]^{} & 0.}
\end{equation*}
Hence there is a short exact sequence
\begin{equation}\label{exact-triangle-pre0}
\xymatrix@C=0.5cm{
0 \ar[r]^{} &
\mathscr{D}_{X,Z}^{\bullet}(p,q)[-1]
\ar[r]^{} &
 \mathscr{B}_{X,Z}^{\bullet}(p,q)
 \ar[r]^{} &
 \mathbb{C}_{X,Z}  \ar[r]^{}
  &  0.}
\end{equation}
Note that there is a canonical decomposition
$$
\mathscr{D}_{\tilde{X},E}^{\bullet}(p,q)
\cong
\mathcal{K}_{\tilde{X},E}^{\bullet <p} \oplus \bar{\mathcal{K}}_{\tilde{X},E}^{\bullet <q},
$$
and the derived direct image functor $R\pi_{\ast}$ commutes with direct sum.
On account of Lemma \ref{derived-im-relative-Dolsh},
we get an isomorphism
$$
\pi^{\star}:\mathscr{D}_{X,Z}^{\bullet}(p,q)\stackrel{\simeq}\longrightarrow R\pi_{\ast}\mathscr{D}_{\tilde{X},E}^{\bullet}(p,q).
$$
Due to Corollary \ref{derived-im-relative-constsh}, there exists an isomorphism
$$
\pi^{\star}:\mathbb{C}_{X,Z}
\stackrel{\simeq}\longrightarrow
R\pi_{\ast}\mathbb{C}_{\tilde{X},E}.
$$
Consider the commutative diagram:
\begin{equation}\label{exact-triangle-pre1}
\vcenter{
\xymatrix@C=0.4cm{
0 \ar[r]^{} &  \pi^{-1}\mathscr{D}_{X,Z}^{\bullet}(p,q)[-1] \ar[d]_{}^{ }  \ar[r]^{}
  &  \pi^{-1}\mathscr{B}_{X,Z}^{\bullet}(p,q)\ar[d]_{} \ar[r]^{}
  &  \pi^{-1}\mathbb{C}_{X,Z} \ar[d]_{}^{}  \ar[r]^{}
  &  0     \\
0 \ar[r]^{}  &  \mathscr{D}_{\tilde{X},E}^{\bullet}(p,q)[-1] \ar[r]^{}
  &  \mathscr{B}_{\tilde{X},E}^{\bullet}(p,q) \ar[r]^{}
  & \mathbb{C}_{\tilde{X},E}\ar[r]
  & 0.}}
\end{equation}
Taking the natural transformation $\mathrm{id} \rightarrow R\pi_{\ast}\pi^{-1}$ to \eqref{exact-triangle-pre0}, and the functor $R\pi_{\ast}$ to \eqref{exact-triangle-pre1},
we get a morphism of exact triangles
\begin{equation}\label{exact-triangle}
\vcenter{
\xymatrix@C=0.4cm{
  &  \mathscr{D}_{X,Z}^{\bullet}(p,q)[-1] \ar[d]_{}^{\simeq}  \ar[r]^{}
  &  \mathscr{B}_{X,Z}^{\bullet}(p,q)\ar[d]_{} \ar[r]^{}
  &  \mathbb{C}_{X,Z} \ar[d]_{}^{\simeq}  \ar[r]^{}
  &  \mathscr{D}_{X,Z}^{\bullet}(p,q)[-2] \ar[d]_{}^{\simeq}   \\
  & R\pi_{\ast}\mathscr{D}_{\tilde{X},E}^{\bullet}(p,q)[-1] \ar[r]^{}
  & R\pi_{\ast}\mathscr{B}_{\tilde{X},E}^{\bullet}(p,q) \ar[r]^{}
  & R\pi_{\ast}\mathbb{C}_{\tilde{X},E}\ar[r]
  & R\pi_{\ast}\mathscr{D}_{\tilde{X},E}^{\bullet}(p,q)[-2],}}
\end{equation}
in the derived category of sheaves of $\mathbb{C}_{X}$-modules.
Applying the standard Two of Three property in triangulated category theory (cf. \cite{Ive86}) to \eqref{exact-triangle}, we obtain that \eqref{iso0} is isomorphic.
Moreover, taking the hypercohomology, we are led to the conclusion that the morphism \eqref{iso1}
is an isomorphism.
\end{proof}

We are ready to present the main result of this section.

\begin{thm}\label{main-thm1}
With the same setting as in \eqref{bld},
we have a canonical isomorphism
$$
H_{BC}^{k}(\tilde{X},\mathbb{C}(p,q))
\stackrel{\Phi}\longrightarrow
H_{BC}^{k}(X,\mathbb{C}(p,q))
\oplus \bigg
[\bigoplus_{i=1}^{c-1}H_{BC}^{k-2i}(Z,\mathbb{C}(p-i,q-i))\bigg]
$$
for any integer $k\geq0$, where $\Phi$ is a linear map defined in \eqref{BC-explicit-map}.
In particular, when $k=p+q$ we get a canonical isomorphism of Bott--Chern cohomology
$$
H_{BC}^{p,q}(\tilde{X})
\stackrel{\Phi}\longrightarrow
H_{BC}^{p,q}(X)
\oplus \bigg[\bigoplus_{i=1}^{c-1} H_{BC}^{p-i,q-i}(Z)\bigg],
$$
and a canonical isomorphism of Appeli cohomology
$$
H_{A}^{p,q}(\tilde{X})
\stackrel{\Phi}\longrightarrow
H_{A}^{p,q}(X)
\oplus \bigg[\bigoplus_{i=1}^{c-1} H_{A}^{p-i,q-i}(Z)\bigg].
$$
\end{thm}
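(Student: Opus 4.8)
The strategy is the sheaf-theoretic patching argument already used in \cite{YY20}, now applied to the Bott--Chern complex $\mathscr{B}^\bullet$ instead of the Dolbeault-type complex. The starting point is the commutative blow-up diagram \eqref{bld} together with the two ``relative'' short exact sequences: the one on $X$,
$$
\xymatrix@C=0.5cm{
0 \ar[r] & \mathscr{B}_{X,Z}^{\bullet}(p,q) \ar[r] & \mathscr{B}_{X}^{\bullet}(p,q) \ar[r]^{\imath^{\star}} & \imath_{\ast}\mathscr{B}_{Z}^{\bullet}(p,q) \ar[r] & 0,}
$$
and its analogue on $\tilde X$ for the pair $(\tilde X, E)$. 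Applying $R\pi_\ast$ to the latter and using the natural transformation $\mathrm{id}\to R\pi_\ast\pi^{-1}$ on the former produces a morphism of distinguished triangles. The middle vertical arrow is $\pi^\star$ on $R\pi_\ast\mathscr{B}_{\tilde X}^\bullet(p,q)$; the left vertical arrow is the isomorphism \eqref{iso0} of Lemma \ref{Derived-dirim-kershcom}; the right vertical arrow is $\imath_\ast(\rho$-pushforward$)$ applied to $\mathscr{B}_E^\bullet(p,q)$, which by the projective bundle formula Proposition \ref{(p,q)-projbundle-formula} (in its $\mathscr{B}$-form, obtained from \eqref{equal-cohom-relBCcomplex}-type shifting) is identified with $\imath_\ast\big(\bigoplus_{i=0}^{c-1}\mathscr{B}_Z^\bullet(p-i,q-i)[-2i]\big)$. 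Extracting the cone and using the octahedral/``two out of three'' axiom, one obtains a quasi-isomorphism
$$
R\pi_\ast\mathscr{B}_{\tilde X}^\bullet(p,q)\;\simeq\;\mathscr{B}_X^\bullet(p,q)\,\oplus\,\imath_\ast\Big[\bigoplus_{i=1}^{c-1}\mathscr{B}_Z^\bullet(p-i,q-i)[-2i]\Big],
$$
where the copy $i=0$ on $E$ cancels against $\imath_\ast\mathscr{B}_Z^\bullet(p,q)$ via the splitting $\imath^\star\circ\pi_\ast=\rho_\ast\circ\tilde\imath^\star$ combined with $\rho_\ast\circ\rho^\star=\mathrm{id}$. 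Taking hypercohomology $\mathbb{H}^k(X,-)$ of both sides and invoking $\mathbb{H}^k(X,R\pi_\ast(-))=\mathbb{H}^k(\tilde X,-)$ yields the asserted decomposition, with $\Phi$ the map \eqref{BC-explicit-map} assembled from $\pi^\star$ on the first summand and $(\rho_\ast\circ\tilde\imath^\star)\circ(h^i\wedge-)$-type adjoints on the remaining summands.

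\textbf{Reduction to the split exact sequence.} The one genuinely delicate point is that the blow-up sequence \eqref{pre-split-blwp-BCcomplex} does \emph{not} split at the level of sheaf complexes (as the remark after \eqref{s-e-bcs-d} warns), so the splitting must be produced at the level of $R\pi_\ast$ or of hypercohomology. The mechanism is the standard one: in the morphism of triangles above, the left and right vertical maps are isomorphisms, hence so is the middle one; but to read off the \emph{direct sum} decomposition of $R\pi_\ast\mathscr{B}_{\tilde X}^\bullet(p,q)$ one needs a splitting of the triangle
$$
\xymatrix@C=0.5cm{
R\pi_\ast\mathscr{B}_{\tilde X,E}^\bullet(p,q) \ar[r] & R\pi_\ast\mathscr{B}_{\tilde X}^\bullet(p,q) \ar[r]^{\tilde\imath^\star} & R\rho_\ast\mathscr{B}_E^\bullet(p,q) \ar[r] & [1].}
$$
Here the projective bundle formula is decisive: $R\rho_\ast\mathscr{B}_E^\bullet(p,q)\simeq\bigoplus_{i=0}^{c-1}\mathscr{B}_Z^\bullet(p-i,q-i)[-2i]$, and the $i=0$ component of $\tilde\imath^\star\circ\pi^\star$ is exactly $\rho^\star$ on $Z$ followed by the inclusion, which is split by $\rho_\ast$. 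This kills the connecting map on the $i=0$ piece and forces the triangle to split, with the $i\geq1$ pieces contributing the $Z$-summands. I expect this bookkeeping—matching the $h^i\wedge\rho^\star$ generators of Corollary \ref{cohom-BCcomplex-projbundle} against the pieces of $\tilde\imath^\star$ and verifying compatibility of all the identifications \eqref{equal-cohom-relBCcomplex}, \eqref{c-dol-0}—to be the main technical obstacle; it is exactly the step where the non-splitting of \eqref{pre-split-blwp-BCcomplex} is circumvented and where the explicit form of $\Phi$ in \eqref{BC-explicit-map} is pinned down.

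\textbf{Specialization to $k=p+q$.} Once the isomorphism for general $k$ is in hand, the Bott--Chern and Aeppli consequences are immediate. By Lemma \ref{equal-cohom-BCcomplex}, $H_{BC}^{p+q}(X,\mathbb{C}(p,q))\cong H_{BC}^{p,q}(X)$ and $H_{BC}^{(p-i)+(q-i)}(Z,\mathbb{C}(p-i,q-i))\cong H_{BC}^{p-i,q-i}(Z)$, so setting $k=p+q$ in the general formula gives the Bott--Chern blow-up decomposition verbatim. For Aeppli cohomology one applies the same reasoning to the bi-degree $(n-p+1,n-q+1)$ on the respective manifolds and uses the Hodge-$\ast$ duality of Proposition \ref{SD-bchper}: $H_A^{p,q}(\tilde X)\cong H_{BC}^{2\tilde n+1-(2n-p-q+1)}(\tilde X,\mathbb{C}(n-p+1,n-q+1))$, and likewise on $X$ and $Z$ (noting that the codimension shift $i$ interacts correctly with the $\ast$-duality indices because $(n-p+1)-i=n-(p+i)+1$ on $Z$, whose dimension is $n-c$, matches the bidegree $(p-i,q-i)$ on $Z$ under the corresponding duality). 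Since $\Phi$ was constructed from pullbacks and the bundle generators $h^i$, it commutes with $\ast$ up to the canonical identifications, so the Aeppli decomposition follows with the same map $\Phi$. No further computation is needed beyond checking that the index arithmetic in the $\ast$-duality is consistent with the codimension-$c$ shift, which is routine.
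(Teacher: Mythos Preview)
Your overall framework---relative Bott--Chern complexes, Lemma \ref{Derived-dirim-kershcom}, and the projective bundle formula---matches the paper's. But there is a genuine gap in the splitting step, and it is precisely the point where the paper invokes an ingredient you have omitted.

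In your morphism of triangles, the statement ``the left and right vertical maps are isomorphisms, hence so is the middle one'' is false: the right vertical $\rho^{\star}:\imath_{\ast}\mathscr{B}_{Z}^{\bullet}(p,q)\to \imath_{\ast}R\rho_{\ast}\mathscr{B}_{E}^{\bullet}(p,q)$ is only a split monomorphism (onto the $i=0$ summand), not an isomorphism. What the cone argument actually yields is a distinguished triangle
\[
\mathscr{B}_{X}^{\bullet}(p,q)\xrightarrow{\pi^{\star}} R\pi_{\ast}\mathscr{B}_{\tilde X}^{\bullet}(p,q)\longrightarrow \imath_{\ast}\bigoplus_{i=1}^{c-1}\mathscr{B}_{Z}^{\bullet}(p-i,q-i)[-2i]\longrightarrow [1],
\]
and nothing you have written forces this to split, either in $D(X)$ or after taking hypercohomology. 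Your remark that the $i=0$ piece splits off only identifies the cone; it says nothing about the connecting morphism on the $i\geq 1$ summands. At the sheaf level there is no evident Gysin-type section $\imath_{\ast}\mathscr{B}_{Z}^{\bullet}(p-i,q-i)[-2i]\to R\pi_{\ast}\mathscr{B}_{\tilde X}^{\bullet}(p,q)$ to invoke. The paper does \emph{not} attempt to split in $D(X)$; instead it passes to the hypercohomology long exact sequence \eqref{general-BC-comm-diag-1} and uses Proposition \ref{cohom-BCcomplex-injective}: the pushforward of currents gives a left inverse $\pi_{\star}$ to $\pi^{\star}$ on $\mathbb{H}^{k-1}(-,\mathscr{L}^{\bullet}(p,q))$, so $\pi^{\star}$ is injective on hypercohomology, the connecting maps vanish, and the diagram chase yields the cokernel identity \eqref{BC-ck-ck}. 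This currents argument is the missing idea; without it your splitting claim is unsupported. It is also what pins down the explicit $\Phi$ in \eqref{BC-explicit-map}, whose first component is $\tau^{-1}\circ\pi_{\star}\circ\tilde\tau$ rather than an abstract retraction.

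For the Aeppli statement, your route through Proposition \ref{SD-bchper} can be made to work but is unnecessarily indirect (and your index bookkeeping with ``$\tilde n$'' is garbled). The paper's route is simpler: since $H_{A}^{p,q}(X)=H_{BC}^{p+q+1}(X,\mathbb{C}(p+1,q+1))$ by the description of $\mathscr{L}^{\bullet}$, one just applies the already-proved general isomorphism with bi-degree $(p+1,q+1)$ and $k=p+q+1$, and the $Z$-summands become $H_{BC}^{(p-i)+(q-i)+1}(Z,\mathbb{C}(p-i+1,q-i+1))=H_{A}^{p-i,q-i}(Z)$ on the nose.
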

\begin{proof}
Our first goal is to construct a commutative diagram of long exact sequences of hypercohomology groups.
Since the pullback of differential forms commutes with the differential operators $\bar{\partial}$ and $\partial$, it follows from \eqref{exact-bi-degreerelDolsh} that there exists a commutative diagram of short exact sequences of vector spaces
\begin{equation*}
\xymatrix@C=0.5cm{
 0 \ar[r]^{} & \Gamma(X,\mathcal{K}_{X,Z}^{s,t}) \ar[d]_{\pi^{\star}} \ar[r]^{} & \Gamma(X,\mathcal{A}_{X}^{s,t}) \ar[d]_{\pi^{\star}} \ar[r]^{\imath^{\star}} &  \Gamma(Z,\mathcal{A}_{Z}^{s,t})\ar[d]_{\rho^{\star}} \ar[r]^{} & 0 \\
0 \ar[r] &  \Gamma(\tilde{X},\mathcal{K}_{\tilde{X},E}^{s,t}) \ar[r]^{} & \Gamma(\tilde{X},\mathcal{A}_{\tilde{X}}^{s,t}) \ar[r]^{\tilde{\imath}^{\star}} & \Gamma(E,\mathcal{A}_{E}^{s,t}) \ar[r] &0. }
\end{equation*}
Furthermore, we get a commutative diagram of short exact sequences of complexes
\begin{equation*}
\xymatrix@C=0.5cm{
 0 \ar[r]^{} & \Gamma(X,\mathscr{L}_{X,Z}^{\bullet}(p,q)) \ar[d]_{\pi^{\star}} \ar[r]^{} & \Gamma(X,\mathscr{L}_{X}^{\bullet}(p,q)) \ar[d]_{\pi^{\star}} \ar[r]^{\imath^{\star}} &  \Gamma(Z,\mathscr{L}_{Z}^{\bullet}(p,q))\ar[d]_{\rho^{\star}} \ar[r]^{} & 0 \\
0 \ar[r] &  \Gamma(\tilde{X},\mathscr{L}_{\tilde{X},E}^{\bullet}(p,q)) \ar[r]^{} & \Gamma(\tilde{X},\mathscr{L}_{\tilde{X}}^{\bullet}(p,q)) \ar[r]^{\tilde{\imath}^{\star}} & \Gamma(E,\mathscr{L}_{E}^{\bullet}(p,q)) \ar[r] &0,}
\end{equation*}
which induces a commutative diagram of long exact sequences of hypercohomology groups:
\begin{equation}\label{general-BC-comm-diag-0}
\vcenter{
\tiny{
\xymatrix@C=0.6cm{
  \cdots \ar[r]^{}
  & \mathbb{H}^{k-1}(X, \mathscr{L}_{X, Z}^{\bullet}(p,q))\ar[d]_{\pi^{\star}}^{}  \ar[r]^{}
  & \mathbb{H}^{k-1}(X, \mathscr{L}_{X}^{\bullet}(p,q))\ar[d]_{\pi^{\star}} \ar[r]^{\imath^{\star}}
  & \mathbb{H}^{k-1}(Z,\mathscr{L}_{Z}^{\bullet}(p,q)) \ar[d]_{\rho^{\star}}  \ar[r]^{}
  & \mathbb{H}^{k}(X, \mathscr{L}_{X, Z}^{\bullet}(p,q)) \ar[d]_{\pi^{\star}}^{}  \ar[r]^{} &  \cdots \\
  \cdots  \ar[r]
  & \mathbb{H}^{k-1}(\tilde{X}, \mathscr{L}_{\tilde{X},E}^{\bullet}(p,q))\ar[r]^{}
  & \mathbb{H}^{k-1}(\tilde{X}, \mathscr{L}_{\tilde{X}}^{\bullet}(p,q)) \ar[r]^{\tilde{\imath}^{\star}}
  & \mathbb{H}^{k-1}(E,\mathscr{L}_{E}^{\bullet}(p,q)) \ar[r]
  & \mathbb{H}^{k}(\tilde{X}, \mathscr{L}_{\tilde{X},E}^{\bullet}(p,q))\ar[r]&  \cdots}
  }}
\end{equation}
Due to Lemma \ref{equal-cohom-BCcomplex} and the isomorphism \eqref{equal-cohom-relBCcomplex},
the commutative diagram \eqref{general-BC-comm-diag-0} is equal to
\begin{equation}\label{general-BC-comm-diag-1}
\vcenter{
\tiny{
\xymatrix@C=0.6cm{
 \cdots  \ar[r]^{}
  & \mathbb{H}^{k}(X, \mathscr{B}_{X, Z}^{\bullet}(p,q))\ar[d]_{\pi^{\star}}^{}  \ar[r]^{}
  & H_{BC}^{k}(X, \mathbb{C}(p,q))\ar[d]_{\pi^{\star}} \ar[r]^{\imath^{\star}}
  & H_{BC}^{k}(Z,\mathbb{C}(p,q)) \ar[d]_{\rho^{\star}}  \ar[r]^{}
  & \mathbb{H}^{k+1}(X, \mathscr{B}_{X, Z}^{\bullet}(p,q)) \ar[d]_{\pi^{\star}}^{}  \ar[r]^{} &  \cdots\\
 \cdots  \ar[r]
  & \mathbb{H}^{k}(\tilde{X}, \mathscr{B}_{\tilde{X},E}^{\bullet}(p,q))\ar[r]^{}
  & H_{BC}^{k}(\tilde{X}, \mathbb{C}(p,q)) \ar[r]^{\tilde{\imath}^{\star}}
  & H_{BC}^{k}(E,\mathbb{C}(p,q)) \ar[r]
  & \mathbb{H}^{k+1}(\tilde{X}, \mathscr{B}_{\tilde{X},E}^{\bullet}(p,q))\ar[r]&  \cdots}
  }}
\end{equation}
On the one hand, from Lemma \ref{Derived-dirim-kershcom} (or \cite[Lemma 3.4]{YY20}), the first and the fourth verticals in \eqref{general-BC-comm-diag-1} are isomorphic.
On the other hand, owing to Proposition \ref{cohom-BCcomplex-injective} and the projective bundle formulae Theorem \ref{cohom-BCcomplex-projbundle}, the rest verticals in \eqref{general-BC-comm-diag-1} are injective.
Via a standard diagram-chasing in \eqref{general-BC-comm-diag-1}, we get an isomorphism of complex vector spaces
\begin{equation}\label{BC-ck-ck}
\small{
\mathrm{coker}
\bigl[H_{BC}^{k}(X,\mathbb{C}(p,q))
\stackrel{\pi^{\star}}\rightarrow H_{BC}^{k}(\tilde{X},\mathbb{C}(p,q))\bigr]
\cong
\mathrm{coker}
\bigl[H_{BC}^{k}(Z,\mathbb{C}(p,q))
\stackrel{\rho^{\star}}\rightarrow
H_{BC}^{k}(E,\mathbb{C}(p,q))\bigr].
}
\end{equation}
As a direct result of \eqref{BC-ck-ck}, we get the blow-up formula
\begin{eqnarray}\label{abstract-BCblowup-formu}
H_{BC}^{k}(\tilde{X}, \mathbb{C}(p,q))
&\cong&
H_{BC}^{k}(X, \mathbb{C}(p,q))
\oplus
\biggl[H_{BC}^{k}(E,\mathbb{C}_{E}(p,q)) \big/
\rho^{\star} H_{BC}^{k}(Z,\mathbb{C}(p,q))\biggr] \nonumber \\
& \cong & H_{BC}^{k}(X, \mathbb{C}(p,q))
\oplus
\biggl[\bigoplus_{i=1}^{c-1} H_{BC}^{k-2i}(Z, \mathbb{C}(p-i,q-i))\biggr],
\end{eqnarray}
where the last isomorphism follows from Corollary \ref{cohom-BCcomplex-projbundle}.

In what follows, we will identify
$H_{BC}^{k}(X,\mathbb{C}(p,q))$
and
$\mathbb{H}^{k-1}(X,\mathscr{L}_{X}^{\bullet}(p,q))$
as complex vector spaces, via the canonical isomorphism in Lemma \ref{equal-cohom-BCcomplex}.
The construction of $\Phi$ is the same as the one given in the proof of \cite[Theorem 1.2]{RYY20}.
Observe that the natural inclusion of complexes
$$
\tau:\mathscr{L}_{X}^{\bullet}(p,q)\hookrightarrow\mathscr{C}_{X}^{\bullet}(p,q)
$$
induces an isomorphism
$$
\tau:\mathbb{H}^{k-1}(X, \mathscr{L}_{X}^{\bullet}(p,q))
\stackrel{\simeq}\longrightarrow
\mathbb{H}^{k-1}(X, \mathscr{C}_{X}^{\bullet}(p,q)).
$$
Likewise, we have
$$
\tilde{\tau}:\mathbb{H}^{k-1}(\tilde{X}, \mathscr{L}_{\tilde{X}}^{\bullet}(p,q))
\stackrel{\simeq}\longrightarrow
\mathbb{H}^{k-1}(\tilde{X}, \mathscr{C}_{\tilde{X}}^{\bullet}(p,q))
$$
induced by the inclusion
$\tilde{\tau}:\mathscr{L}_{\tilde{X}}^{\bullet}(p,q)\hookrightarrow\mathscr{C}_{\tilde{X}}^{\bullet}(p,q)$.
Because the blow-up morphism is a proper modification,
similar to \eqref{form-current-comdiagram1}, we get a commutative diagram
\begin{equation}\label{form-current-comdiagram}
\vcenter{
\xymatrix{
  \mathbb{H}^{k-1}(X, \mathscr{L}_{X}^{\bullet}(p,q)) \ar[d]_{\pi^{\star}} \ar[r]_{\tau}^{\simeq} & \mathbb{H}^{k-1}(X, \mathscr{C}_{X}^{\bullet}(p,q))   \\
  \mathbb{H}^{k-1}(\tilde{X}, \mathscr{L}_{\tilde{X}}^{\bullet}(p,q))
  \ar[r]_{\tilde{\tau} }^{\simeq}
  & \mathbb{H}^{k-1}(\tilde{X}, \mathscr{C}_{\tilde{X}}^{\bullet}(p,q)) \ar[u]^{\pi_{\star}} ,}}
\end{equation}
namely, $\tau=\pi_{\star}\circ \tilde{\tau} \circ \pi^{\star}$.
This implies that the morphism $\pi_{\star}$ in \eqref{form-current-comdiagram} is surjective.
Thanks to Corollary \ref{cohom-BCcomplex-projbundle}, every class
$$
[\tilde{\alpha}]\in \mathbb{H}^{k-1}(E,\mathscr{L}_{E}^{\bullet}(p,q))
\cong\mathbb{H}^{k}(E,\mathscr{B}_{E}^{\bullet}(p,q))
$$
admits a unique decomposition
$$
[\tilde{\alpha}]=
\sum_{i=0}^{c-1}h^{i}\wedge
\rho^{\ast}([\alpha]_{(k-2i-1)}),
$$
where $[\alpha]_{(k-2i-1)}\in \mathbb{H}^{k-2i-1}(Z,\mathscr{L}_{Z}^{\bullet}(p-i,q-i))$ and
$h=c_{1}(\mathcal{O}_{E}(1))\in H^{2}(E, \mathbb{Z})$.
For each $i\in \{0, 1, \cdots, c-1\}$, we can define a linear map
\begin{eqnarray*}
  \Pi_{i}: \mathbb{H}^{k-1}(E,\mathscr{L}_{E}^{\bullet}(p,q))
  &\longrightarrow& \mathbb{H}^{k-2i-1}(Z,\mathscr{L}_{Z}^{\bullet}(p-i,q-i)) \\
  {[\tilde{\alpha}]}&\longmapsto& {[\alpha]}_{(k-2i-1)}.
\end{eqnarray*}
Moreover, we get a linear map
\small{
\begin{equation}\label{BC-explicit-map}
\Phi:\mathbb{H}^{k-1}(\tilde{X},\mathscr{L}_{\tilde{X}}^{\bullet}(p,q)
\longrightarrow
\mathbb{H}^{k-1}(X,\mathscr{L}_{X}^{\bullet}(p,q))
\oplus \bigg[\bigoplus_{i=1}^{c-1} \mathbb{H}^{k-2i-1}(Z,\mathscr{L}_{Z}^{\bullet}(p-i,q-i))\bigg],
\end{equation}
}
where
$\displaystyle \Phi= \tau^{-1}\circ \pi_{\star}\circ \tilde{\tau}+\sum_{i=1}^{c-1}\Pi_{i}\circ\tilde{\imath}^{\star}$.

Finally, we will verify that $\Phi$ is an isomorphism.
Note that $\Phi$ is a linear map of finite dimensional vector spaces over $\mathbb{C}$.
By the isomorphism \eqref{abstract-BCblowup-formu}, the map $\Phi$ is an isomorphism if and only if it is injective.
Combining \eqref{general-BC-comm-diag-0} with \eqref{form-current-comdiagram} derives a commutative diagram of short exact sequences:
\begin{equation}\label{current-BC-coker}
\vcenter{
\xymatrix@C=0.5cm{
0 & \ar[l]  \mathbb{H}^{k-1}(X, \mathscr{C}_{X}^{\bullet}(p,q)) &\ar[l]_{\pi_{\star}}
\mathbb{H}^{k-1}(\tilde{X}, \mathscr{C}_{\tilde{X}}^{\bullet}(p,q))
 & \ar[l]^{}
\mathrm{ker}\,(\pi_{\star}) & \ar[l]^{}  0\\
0 \ar[r]^{} & \mathbb{H}^{k-1}(X, \mathscr{L}_{X}^{\bullet}(p,q))
\ar[u]^{\tau}_{\cong} \ar[d]_{\imath^{\star}}
\ar[r]^{\pi^{\star}} &
\mathbb{H}^{k-1}(\tilde{X}, \mathscr{L}_{\tilde{X}}^{\bullet}(p,q))
\ar[d]_{\tilde{\imath}^{\star}}
\ar[u]^{\tilde{\tau}}_{\cong}
\ar[r]^{} &
\mathrm{coker}\,(\pi^{\star}) \ar[d]_{\overline{\tilde{\imath}^{\star}}}^{\cong} \ar[u]^{\overline{\tilde{\tau}}}_{}  \ar[r]^{} & 0 \\
0 \ar[r] &
\mathbb{H}^{k-1}(Z, \mathscr{L}_{Z}^{\bullet}(p,q))
\ar[r]^{\rho^{\star}} &
\mathbb{H}^{k-1}(E, \mathscr{L}_{E}^{\bullet}(p,q))
\ar[r]^{} &
\mathrm{coker}\,(\rho^{\star})\ar[r]^{} & 0.}}
\end{equation}
The commutativity of \eqref{current-BC-coker} implies that the map
$
\overline{\tilde{\tau}}:\mathrm{coker}\,(\pi^{\star})\longrightarrow \ker\,(\pi_{\star})
$
is isomorphic, and the map $\tilde{\imath}^{\star}$ is injective on
$\ker\,(\tau^{-1} \circ \pi_{\star}\circ \tilde{\tau})$.
Choose an element
$[\tilde{\beta}]\in \mathbb{H}^{k-1}(\tilde{X}, \mathscr{L}_{\tilde{X}}^{\bullet}(p,q))$.
Assume that $\Phi([\tilde{\beta}])=0$, then by the definition of $\Phi$ we have
$[\tilde{\beta}]\in\ker\,(\tau^{-1} \circ \pi_{\star}\circ \tilde{\tau})$
and
$\tilde{\imath}^{\star}([\tilde{\beta}])=0$ and therefore $[\tilde{\beta}]=0$.
This implies that \eqref{BC-explicit-map} is isomorphic.
Equivalently, we get an isomorphism
$$
\Phi:H_{BC}^{k}(\tilde{X},\mathbb{C}(p,q))
\stackrel{\simeq}\longrightarrow
H_{BC}^{k}(X,\mathbb{C}(p,q))
\oplus \bigg
[\bigoplus_{i=1}^{c-1}H_{BC}^{k-2i}(Z,\mathbb{C}(p-i,q-i))\bigg]
$$
In particular, if $k=p+q$ we obtain explicit presentation of the Bott--Chern blow-up formula
$$
\Phi:H_{BC}^{p,q}(\tilde{X})
\stackrel{\simeq}\longrightarrow
H_{BC}^{p,q}(X)
\oplus \bigg[\bigoplus_{i=1}^{c-1} H_{BC}^{p-i,q-i}(Z)\bigg].
$$
This completes the proof.
\end{proof}

\begin{rem}
In fact, the Bott--Chern complex can be thought of as a $\mathbb{C}$-augmentation of the direct sum of truncated holomorphic and anti-holomorphic de Rham complexes.
One can also define the integral or real Bott--Chern complex:
\begin{equation}\label{r-z-bcs}
\xymatrix@C=0.4cm{
0\ar[r]^{} & \mathbb{G}_{X}\ar[r]^{} & \mathcal{O}_{X}\oplus \bar{\mathcal{O}}_{X} \ar[r]^{} &\Omega_{X}^{1}\oplus \bar{\Omega}_{X}^{1}
  \ar[r]^{}& \cdots  \ar[r]^{}& \Omega_{X}^{p-1}\oplus \bar{\Omega}_{X}^{p-1} \ar[r]^{} & \bar{\Omega}_{X}^{p} \ar[r]^{} & \cdots   \ar[r]^{} & \bar{\Omega}_{X}^{q-1} \ar[r]^{} & 0,}
\end{equation}
where $\mathbb{G}=\mathbb{Z}$ or $\mathbb{R}$ (cf. \cite{Sch07}).
It is worth noting that each integral (resp. real) Bott--Chern complex can split as the direct sum of a Deligne complex (resp. a real Deligne complex) and a truncated anti-holomorphic de Rham complex (cf. \cite[\S\,7.c]{Sch07}).
So the blow-up formulae for integral (resp. real) Bott--Chern cohomology can obtained from the Deligne blow-up formulae and the blow-up formulae for truncated anti-holomorphic de Rham cohomology.
The blow-up formulae for integral Bott--Chern cohomology have been proven in \cite{CY21} and Wu \cite{Wu20}, independently.
In contrast to the integral and real cases, in the case of $\mathbb{G}=\mathbb{C}$, the sheaf complex \eqref{r-z-bcs} has no natural splitting.
\end{rem}

Recall that the sheaf complex $\mathbb{C}_{X}(p)$ is quasi-isomorphic to the truncated holomorphic de Rham complex $\Omega^{\bullet\geq p}_{X}[-p]$.
As a result, we have a canonical isomorphism
\begin{equation*}
\mathbb{H}^{k}(X,\mathbb{C}_{X}(p))\cong\mathbb{H}^{k}(X,\Omega^{\bullet\geq p}_{X}[-p])
\cong\mathbb{H}^{k-p}(X,\Omega^{\bullet\geq p}_{X}).
\end{equation*}
We can now state the blow-up formulae for the hypercohomology of the sheaf complex $\mathbb{C}_{X}(p)$.
\begin{thm}\label{hol-bl-u-f}
For any integer $k\geq0$, there exists a canonical isomorphism
$$
\mathbb{H}^{k}(\tilde{X},\mathbb{C}_{\tilde{X}}(p))
\stackrel{\Phi}\longrightarrow
\mathbb{H}^{k}(X,\mathbb{C}_{X}(p))
\oplus \bigg[\bigoplus_{i=1}^{c-1} \mathbb{H}^{k-2i}(Z,\mathbb{C}_{Z}(p-i))\bigg],
$$
where $\Phi$ is the same as \eqref{BC-explicit-map}.
\end{thm}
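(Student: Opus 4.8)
The plan is to reproduce the proof of Theorem \ref{main-thm1} with $\mathbb{C}_X(p)$ in place of $\mathscr{B}_X^{\bullet}(p,q)$. The first task is a fine model for $\mathbb{C}_X(p)$: by \eqref{c-dol-0} it is quasi-isomorphic to the brutal truncation $\Omega_X^{\bullet\geq p}$ of the holomorphic de Rham complex, and since each $\Omega_X^{s}$ is Dolbeault-resolved by $\mathcal{A}_X^{s,\bullet}$, the total complex $\mathscr{F}_X^{\bullet}(p)$ of the Hodge-filtered Dolbeault double complex $\bigl(\bigoplus_{s\geq p}\mathcal{A}_X^{s,t},\ \partial+\bar\partial\bigr)$ is a complex of fine sheaves quasi-isomorphic to $\mathbb{C}_X(p)$; hence $\mathbb{H}^{\bullet}(X,\mathbb{C}_X(p))$ is computed by $\Gamma(X,\mathscr{F}_X^{\bullet}(p))$. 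Replacing each $\mathcal{A}_X^{s,t}$ by the sheaf of currents $\mathcal{D}_X^{s,t}$ gives a quasi-isomorphic complex $\mathscr{G}_X^{\bullet}(p)$, the inclusion $\tau\colon\mathscr{F}_X^{\bullet}(p)\hookrightarrow\mathscr{G}_X^{\bullet}(p)$ being a quasi-isomorphism by the argument of \cite[Lemma VI.12.1]{Dem12}. All the tools used in the proof of Theorem \ref{main-thm1} are then available: pullback and pushforward of forms and currents, and the action of $h$ by wedging with a Chern form of $\mathcal{O}_{\PV}(1)$, which preserves the analogous complexes on $\PV$ and on $E$.

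Next I would put the four ingredients of the proof of Theorem \ref{main-thm1} into place. \emph{(i) Relative version}: using the relative Dolbeault sheaves $\mathcal{K}_{X,Z}^{s,t}$ of Appendix \ref{appA} and the restriction sequence \eqref{exact-bi-degreerelDolsh} one forms the relative complex $\mathscr{F}_{X,Z}^{\bullet}(p)$ and obtains a short exact sequence $0\to\mathscr{F}_{X,Z}^{\bullet}(p)\to\mathscr{F}_X^{\bullet}(p)\to\imath_{\ast}\mathscr{F}_Z^{\bullet}(p)\to 0$ (and similarly for the pair $(\tilde X,E)$), hence a commutative ladder of long exact sequences of hypercohomology as in \eqref{general-BC-comm-diag-0}. \emph{(ii) Relative pushforward}: $\pi^{\star}\colon\mathscr{F}_{X,Z}^{\bullet}(p)\stackrel{\simeq}{\to}R\pi_{\ast}\mathscr{F}_{\tilde X,E}^{\bullet}(p)$ in the derived category, so that the first and fourth verticals of the ladder are isomorphisms; this is proved exactly as Lemma \ref{Derived-dirim-kershcom}, using the relative-pushforward results of Appendix \ref{appA} (alternatively, one may combine Lemma \ref{Derived-dirim-kershcom} with the relative form of the short exact sequence \eqref{s-e-bcs-d}). \emph{(iii) Projective bundle formula}: the classical decomposition $R\rho_{\ast}\Omega^{s}_{\PV}\cong\bigoplus_{i=0}^{c-1}\Omega^{s-i}_{Z}[-i]$, assembled over the Hodge-filtered complex, yields $\mathbb{H}^{k}(\PV,\mathbb{C}_{\PV}(p))\cong\bigoplus_{i=0}^{c-1}\mathbb{H}^{k-2i}(Z,\mathbb{C}_{Z}(p-i))$ realized by $\sum_{i} h^{i}\wedge\rho^{\star}$ (with the convention $\mathbb{C}_{Z}(m):=\mathbb{C}_{Z}$ for $m\leq 0$), and the same for $\rho\colon E\to Z$; hence the remaining verticals of the ladder are injective. \emph{(iv) Injectivity}: $\rho^{\star}$ on $\mathbb{H}^{\bullet}(Z,\mathbb{C}_Z(p))$ is the direct-summand inclusion coming from (iii), and the identity $\tau=\pi_{\star}\circ\tilde{\tau}\circ\pi^{\star}$ — valid because $\pi$ is a proper modification of degree one, exactly as in the proof of Proposition \ref{cohom-BCcomplex-injective} — shows that $\pi^{\star}$ is injective on $\mathbb{H}^{\bullet}(X,\mathbb{C}_X(p))$ and $\pi_{\star}$ surjective.

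Given (i)--(iv), the diagram chase of Theorem \ref{main-thm1} applied to the ladder produces the abstract isomorphism $\mathbb{H}^{k}(\tilde X,\mathbb{C}_{\tilde X}(p))\cong\mathbb{H}^{k}(X,\mathbb{C}_X(p))\oplus\bigl[\bigoplus_{i=1}^{c-1}\mathbb{H}^{k-2i}(Z,\mathbb{C}_Z(p-i))\bigr]$. One then defines $\Phi=\tau^{-1}\circ\pi_{\star}\circ\tilde{\tau}+\sum_{i=1}^{c-1}\Pi_{i}\circ\tilde{\imath}^{\star}$ by literally the recipe of \eqref{BC-explicit-map}, where $\Pi_{i}$ is the $i$-th component of the decomposition in (iii) on $E$, and the diagram chase in the analogue of \eqref{current-BC-coker} shows $\ker\Phi=0$; as source and target are finite-dimensional of equal dimension by the abstract isomorphism, $\Phi$ is an isomorphism. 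The only point requiring genuine care is (ii), the relative derived-pushforward statement for the Hodge-filtered truncation; it is in fact lighter than its Bott--Chern counterpart Lemma \ref{Derived-dirim-kershcom}, since no anti-holomorphic tail intervenes, and it reduces to a reorganization of the classical computation of $R\pi_{\ast}\Omega^{s}_{\tilde X}$ together with the relative Dolbeault machinery of Appendix \ref{appA}. The other thing to monitor is the edge-case convention $\mathbb{C}_Z(m)=\mathbb{C}_Z$ for $m\leq 0$ that enters (iii).
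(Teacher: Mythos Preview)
Your proposal is correct and follows essentially the same approach as the paper: the paper's proof is only a brief outline that says to take the Dolbeault resolution of $\Omega_X^{\bullet\geq p}[-p]$ as a fine model, establish a projective bundle formula analogous to Proposition \ref{dirim-truncated-deRham}, and then repeat the arguments of Theorem \ref{main-thm1}. In fact you have spelled out the four ingredients (relative complex, relative derived pushforward, projective bundle formula, injectivity via currents) and the construction of $\Phi$ in more detail than the paper itself does.
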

\begin{proof}(Outline of proof)
First we consider the Dolbeault resolution of $\Omega^{\bullet\geq p}_{X}[-p]$, which is a truncation of the Dolbeault double complex.
Akin to Proposition \ref{dirim-truncated-deRham}, we can establish a projective bundle formula for the truncated holomorphic de Rham cohomology considered in Theorem \ref{hol-bl-u-f}.
Then the remainder of the proof go through by using the same arguments in the proof of Theorem \ref{main-thm1}; see also \cite[Theorem 1.4]{Men2}, \cite[Theorem 4.18]{Men4}, or \cite[Proposition 8]{CY21}.
\end{proof}

\begin{rem}
Let $H$ be a local system of complex vector spaces over $X$ having a holomorphic de Rham resolution (cf. \cite[II, Proposition 5.3]{Voi02}).
By a twisted Bott--Chern complex, we mean the sheaf complex
$$
\mathscr{B}_{X}^{\bullet}(p,q; H):
\xymatrix@C=0.5cm{
0\ar[r]^{} & H\ar[r]^{}  \ar[r]^{} &\Omega_{X}^{\bullet <p}(\mathscr{H}_{X})\oplus \bar{\Omega}_{X}^{\bullet<q}(\bar{\mathscr{H}}_{X} ) \ar[r]^{} & 0,}
$$
where $\mathscr{H}_{X}=H\otimes_{\mathbb{C}_{X}} \mathcal{O}_{X}$ is a locally free sheaf equipped with a flat connection, and $\Omega_{X}^{\bullet <p}(\mathscr{H}_{X})$ is the truncated holomorphic de Rham complex with coefficients in $\mathscr{H}_{X}$.
Then the {\it twisted Bott--Chern hypercohomology} is defined to be the hypercohomology of twisted Bott--Chern complex
$$
H_{BC}^{k}(X, H(p,q)):=\mathbb{H}^{k}(X, \mathscr{B}_{X}^{\bullet}(p,q; H)),\,\,\,
\forall\,k\in\mathbb{Z}.
$$
Similar to $\mathscr{L}_{X}^{\bullet}(p,q)$, we can construct a complex of fine sheaves, denoted by $\mathscr{L}_{X}^{\bullet}(p,q; H)$, which is quasi-isomorphic to the twisted Bott--Chern complex $\mathscr{B}_{X}^{\bullet}(p,q; H)[1]$.
Following the steps in the proof of Theorem \ref{main-thm1},
one can establish a canonical isomorphism
$$
H_{BC}^{k}(\tilde{X},\pi^{-1}H(p,q))
\stackrel{\Phi}\longrightarrow
H_{BC}^{k}(X,H(p,q))
\oplus \bigg
[\bigoplus_{i=1}^{c-1}H_{BC}^{k-2i}(Z,\imath^{-1}H(p-i,q-i))\bigg].
$$
Recently, Meng \cite{Men4} gave a systemical study of twisted cohomologies with supports on complex manifolds, and established blow-up formulae for twisted Dolbeault and Bott--Chern cohomologies.
\end{rem}

\paragraph{\textbf{Added in proof:}}
After this paper had been completed, Stelzig \cite{Ste22b} told us a different approach to the projective bundle formula and the blow-up formula for Bott--Chern hypercohomology based on the main result in his recent paper \cite{Ste22} and the structure theory of double complexes developed in \cite{Ste21} and \cite{Ste21b}.


\section{Construction of bimeromorphic invariants}\label{bimero-inv}

In this section, we apply our blow-up formulae (Theorem \ref{main-thm1}) to defining some new bimermorphic invariants for compact complex manifolds.
\begin{defn}
(1) A {\it meromorphic map} $f:X \dashrightarrow Y$ of compact complex spaces is a map $f$ from $X$ to the set of subsets of $Y$ such that the following conditions hold:
\begin{enumerate}
\item[(i)] The graph $\Gamma_{f}:=\{(x, y) \in X\times Y \mid y\in f(x)\}$ is an irreducible analytic subset of $X\times Y$;
\item[(ii)] The projection $p_{X}:\Gamma_f \rightarrow X$ is a proper modification.
\end{enumerate}

(2) A  meromorphic map $f:X \dashrightarrow Y$ is called {\it bimeromorphic map} if furthermore the projection $p_{Y}:\Gamma_{f} \rightarrow Y$ is also a modification.
If there is a bimeromorphic map $f:X \dashrightarrow Y$, then we say that $X$ and $Y$ are {\it bimeromorphically equivalent}.
\end{defn}

The {\it weak factorization theorem} of Abramovich--Karu--Matsuki--W{\l}odarczyk \cite[Theorem 0.3.1]{AKMW02} asserts that each bimeromorphic map between compact complex manifolds is a composition of a finite sequence of blow-ups and blow-downs of compact complex manifolds with smooth centers.
For this reason, to show that an invariant is stable under bimeromorphic transformations, it suffices to verify its invariance under blow-ups with smooth centers.

Let $X$ be a compact complex manifold.
Observe that the sheaf complex
$$
\mathbb{C}_{X}(1):
\xymatrix@C=0.5cm{
  0 \ar[r] & \mathbb{C}_{X} \ar[r]^{} & \mathcal{O}_{X} \ar[r] & 0 }
$$
is quasi-isomorphic to the truncated holomorphic de Rham complex $\Omega^{\bullet\geq1}_{X}[-1]$,
and the Dolbeault resolution gives rise to a fine resolution of $\Omega^{\bullet\geq1}_{X}[-1]$.
Therefore $\mathbb{C}_{X}(1)$ is quasi-isomorphic to the sheaf complex:
\begin{equation}\label{trct-dol}
\xymatrix@C=0.5cm{
  0 \ar[r] &\mathcal{A}^{1,0}_{X} \ar[r]^{d\quad\,\,}
  & \mathcal{A}^{2,0}_{X}\oplus\mathcal{A}^{1,1}_{X} \ar[r]^{d\quad\,\,\,}
  & \mathcal{A}^{3,0}_{X}\oplus \mathcal{A}^{2,1}_{X}\oplus\mathcal{A}^{1,2}_{X}\ar[r]^{d\quad\,\,}
  & \mathcal{A}^{4,0}_{X}\oplus\mathcal{A}^{3,1}_{X}\oplus\mathcal{A}^{2,2}_{X}
  \oplus\mathcal{A}^{1,3}_{X} \ar[r]^{\quad\quad\quad\quad d} & \cdots.}
\end{equation}
Recall that the sheaf of germs of pluriharmonic functions $\mathcal{H}_{X}$ has a fine resolution:
\begin{equation}\label{pluri-har-com}
\xymatrix@C=0.5cm{
  0 \ar[r] &\mathcal{A}^{0,0}_{X} \ar[r]^{\partial\bar{\partial}}
  & \mathcal{A}^{1,1}_{X} \ar[r]^{d\quad\,\,\,}
  &  \mathcal{A}^{1,2}_{X}\oplus\mathcal{A}^{2,1}_{X}\ar[r]^{d\quad\,\,}
  & \mathcal{A}^{1,3}_{X}\oplus\mathcal{A}^{2,2}_{X}\oplus\mathcal{A}^{3,1}_{X} \ar[r]^{\quad\quad \quad d} & \cdots.}
\end{equation}
In particular, we can define a morphism from \eqref{pluri-har-com} to \eqref{trct-dol}:
\begin{equation}\label{map-d-com}
\vcenter{
\small{
\xymatrix@C=0.5cm{
  0 \ar[r]^{} & \mathcal{A}^{0,0}_{X} \ar[d]_{\partial} \ar[r]^{\partial\bar{\partial}} & \mathcal{A}^{1,1}_{X}
   \ar[d]_{\jmath} \ar[r]^{d\quad\quad} & \mathcal{A}^{2,1}_{X}\oplus\mathcal{A}^{1,2}_{X}
    \ar[d]_{\jmath}\ar[r]^{d\quad\quad}&\mathcal{A}^{3,1}_{X}\oplus\mathcal{A}^{2,2}_{X}
  \oplus\mathcal{A}^{1,3}_{X}
   \ar[r]^{\quad\quad\quad d}\ar[d]_{\jmath}&\cdots\\
  0 \ar[r]^{} & \mathcal{A}^{1,0}_{X} \ar[r]^{d\quad\,\,}
  & \mathcal{A}^{2,0}_{X}\oplus\mathcal{A}^{1,1}_{X} \ar[r]^{d\quad\,\,\,}
  & \mathcal{A}^{3,0}_{X}\oplus \mathcal{A}^{2,1}_{X}\oplus\mathcal{A}^{1,2}_{X}\ar[r]^{d\quad\,\,}
  & \mathcal{A}^{4,0}_{X}\oplus\mathcal{A}^{3,1}_{X}\oplus\mathcal{A}^{2,2}_{X}
  \oplus\mathcal{A}^{1,3}_{X} \ar[r]^{\quad\quad\quad\quad d} & \cdots.  }}}
\end{equation}
Here $\jmath$ is the inclusion map.
The morphism \eqref{map-d-com} induces a map of cohomology groups
\begin{equation}\label{inv-1}
  \mathfrak{C}^{k}:H^{k-1}(X,\mathcal{H}_{X}) \longrightarrow H^{k-1}(X,\Omega_{X}^{\bullet \geq 1}),
\end{equation}
for any integer $k\geq1$.

We are now in a position to present the first result of this section.
\begin{thm}\label{birat-inv-thm1}
Both the kernel and the cokernel of the map \eqref{inv-1} are bimeromorphic invariants.
In particular, the integer
$$
\spadesuit^{k}(X)
=\dim_{\mathbb{C}}\, H^{k-1}(X,\Omega_{X}^{\bullet \geq 1})
-
\dim_{\mathbb{C}}\,H^{k-1}(X,\mathcal{H}_{X})
$$
is a bimeromorphic invariant of $X$.
\end{thm}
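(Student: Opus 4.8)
The strategy is to reduce the bimeromorphic invariance to blow-up invariance, by the weak factorization theorem, and then to feed everything through the blow-up formulae already established. First I would observe that, by Lemma \ref{equal-cohom-BCcomplex}, $H^{k-1}(X,\mathcal{H}_X)=\mathbb{H}^{k-1}(X,\mathscr{L}_X^{\bullet}(1,1))\cong H^{k}_{BC}(X,\mathbb{C}(1,1))$, and by \eqref{c-dol-0} together with the quasi-isomorphism $\mathbb{C}_X(1)\simeq\Omega_X^{\bullet\geq1}[-1]$ we have $H^{k-1}(X,\Omega_X^{\bullet\geq1})\cong\mathbb{H}^{k}(X,\mathbb{C}_X(1))$. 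Under these identifications the morphism $\mathfrak{C}^k$ becomes the hypercohomology map induced by $\mathcal{P}:\mathscr{B}^{\bullet}_X(1,1)\to\mathbb{C}_X(1)$ from \eqref{s-e-bcs-d}; the commutative diagram \eqref{map-d-com} is just the fine-resolution incarnation of that projection of sheaf complexes. So the whole statement is about the behaviour of $\mathcal{P}$ under blow-ups.

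Next I would let $\pi:\tilde X\to X$ be the blow-up of a smooth centre $Z\subset X$ of codimension $c\geq2$ as in \eqref{bld}, and write down the commutative square of long exact sequences: for $X$ one has the hypercohomology long exact sequence of \eqref{s-e-bcs-d} relating $H^{k}_{BC}(X,\mathbb{C}(1,1))$, $\mathbb{H}^k(X,\mathbb{C}_X(1))$ and $\mathbb{H}^{k-1}(X,\bar{\Omega}^{\bullet<1}_X[-1])$ — but note $\bar{\Omega}^{\bullet<1}_X=0$ when $q=1$, so in fact $\mathfrak{C}^k$ fits into something simpler. Actually the cleaner route: apply Theorem \ref{main-thm1} with $(p,q)=(1,1)$ and Theorem \ref{hol-bl-u-f} with $p=1$ simultaneously, and check that the isomorphisms $\Phi$ in both statements are compatible with $\mathfrak{C}^k$, i.e. there is a commutative square
\begin{equation*}
\vcenter{\xymatrix@C=0.4cm{
H^{k-1}(\tilde X,\mathcal{H}_{\tilde X})\ar[d]_{\mathfrak{C}^k}\ar[r]^{\Phi}_{\simeq}& H^{k-1}(X,\mathcal{H}_X)\oplus\bigoplus_{i=1}^{c-1}H^{k-2i}(Z,\mathbb{C}(1-i,1-i))\ar[d]^{\mathfrak{C}^k\oplus(\cdots)}\\
H^{k-1}(\tilde X,\Omega^{\bullet\geq1}_{\tilde X})\ar[r]^{\Phi}_{\simeq}& H^{k-1}(X,\Omega^{\bullet\geq1}_X)\oplus\bigoplus_{i=1}^{c-1}\mathbb{H}^{k-2i}(Z,\mathbb{C}_Z(1-i)).}}
\end{equation*}
Here one must identify the summands indexed by $i\geq1$ on the two sides. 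On the top line $\mathscr{L}_Z^{\bullet}(1-i,1-i)=\mathcal{A}_Z^{\bullet}[1]$ for $i\geq1$ by Lemma \ref{(1,1)-projbundle-formula}, so $H^{k-2i}(Z,\mathbb{C}(1-i,1-i))\cong H^{k-2i-2}_{dR}(Z;\mathbb{C})$ — wait, more precisely $\mathbb{H}^{k-2i-1}(Z,\mathcal{A}_Z^{\bullet}[1])=H^{k-2i-2}_{dR}(Z;\mathbb{C})$. On the bottom line, for $p-i=1-i\leq0$ the complex $\Omega_Z^{\bullet\geq1-i}$ is the full de Rham complex shifted, so $\mathbb{H}^{k-2i-1}(Z,\mathbb{C}_Z(1-i))$ is likewise a de Rham group of $Z$, and the component of $\mathfrak{C}^k$ on these summands is an isomorphism (the identity on de Rham cohomology). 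It follows that $\ker\mathfrak{C}^k$ and $\operatorname{coker}\mathfrak{C}^k$ for $\tilde X$ equal those for $X$, and in particular $\spadesuit^k$ is blow-up invariant; the weak factorization theorem then promotes this to bimeromorphic invariance.

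The main obstacle I anticipate is precisely the verification of the compatibility square above: the map $\Phi$ of Theorem \ref{main-thm1} is built out of the current-level trace map $\pi_\star$ plus the projection components $\Pi_i\circ\tilde\imath^{\star}$ coming from the projective bundle decomposition on $E$, and one must check that the analogous $\Phi$ for $\mathbb{C}_X(1)$ in Theorem \ref{hol-bl-u-f} is constructed from the \emph{same} building blocks (same $\pi_\star$, same $h^i\wedge\rho^{\star}$ decomposition) so that $\mathcal{P}$ — equivalently $\jmath$ and $\partial$ in \eqref{map-d-com} — intertwines them. This is essentially functoriality of the whole sheaf-theoretic machine in the morphism $\mathcal{P}:\mathscr{B}^{\bullet}_X(1,1)\to\mathbb{C}_X(1)$; once one grants that the projective bundle formula of Proposition \ref{(1,1)-projbundle-formula} and its holomorphic analogue (Proposition \ref{dirim-truncated-deRham}, used in Theorem \ref{hol-bl-u-f}) are compatible with $\mathcal{P}$ over each small polydisc (which reduces to the Künneth statement that $h\wedge\rho^{\star}$ and the projection $\jmath$ commute with restriction to $\mathbb{P}^{c-1}$), the diagram chase is routine. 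The final sentence, that $\spadesuit^k(X)=\dim_{\mathbb{C}}\operatorname{coker}\mathfrak{C}^k-\dim_{\mathbb{C}}\ker\mathfrak{C}^k$, is immediate from the rank–nullity identity, so its invariance follows from that of the kernel and cokernel.
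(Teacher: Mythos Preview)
Your approach is essentially identical to the paper's: apply the blow-up formulae Theorem \ref{main-thm1} with $(p,q)=(1,1)$ and Theorem \ref{hol-bl-u-f} with $p=1$, observe that for $i\geq1$ both $\mathscr{B}_Z^\bullet(1-i,1-i)$ and $\mathbb{C}_Z(1-i)$ reduce to the constant sheaf $\mathbb{C}_Z$ so the $Z$-summands on both rows are $H^{k-2i}_{dR}(Z;\mathbb{C})$ (your shift computation slipped --- it is $k-2i$, not $k-2i-2$) with the induced map between them the identity, yielding exactly the commutative square \eqref{vai-1-0} with right vertical $\mathfrak{C}^k\oplus\mathrm{id}$, and then invoke weak factorization. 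The paper simply writes down that diagram without dwelling on the compatibility of the two $\Phi$'s that you flag as the main obstacle; your functoriality-of-$\mathcal{P}$ justification is the correct way to fill that in, and otherwise the arguments coincide.
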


\begin{proof}
Let $\pi:\tilde{X}\rightarrow X$ be the blow-up of $X$ along a closed complex submanifold $Z$ with complex codimension $c\geq2$.
Observe that both $\mathscr{B}_{Z}(1-i,1-i)$ and $\mathbb{C}_{Z}(1-i)$ are identical to the locally constant sheaf $\mathbb{C}_{Z}$ for $i\geq 1$.
This implies
$$
H^{k-2i}_{BC}(Z,\mathbb{C}(1-i,1-i))
=H_{dR}^{k-2i}(Z; \mathbb{C})=
\mathbb{H}^{k-2i}(Z,\mathbb{C}_{Z}(1-i)),
$$
for $i\geq 1$.
According to the blow-up formulae in Theorems \ref{main-thm1} and \ref{hol-bl-u-f},
we have two canonical isomorphisms
\begin{eqnarray}\label{b-l-app}
H_{BC}^{k}(\tilde{X}, \mathbb{C}(1,1))
&\cong &
H_{BC}^{k}(X, \mathbb{C}(1,1))
\oplus
\bigg[\bigoplus\limits_{i=1}^{c-1}   H_{BC}^{k-2i}(Z, \mathbb{C}(1-i,1-i))\bigg]\nonumber\\
&=&
H_{BC}^{k}(X, \mathbb{C}(1,1))
\oplus
\bigg[\bigoplus\limits_{i=1}^{c-1}   H_{dR}^{k-2i}(Z; \mathbb{C})\bigg]
\end{eqnarray}
and
\begin{eqnarray}\label{c-l-app}
\mathbb{H}^{k}(\tilde{X}, \mathbb{C}_{\tilde{X}}(1))
&\cong &
\mathbb{H}^{k}(X, \mathbb{C}_{X}(1))
\oplus
\bigg[\bigoplus_{i=1}^{c-1}  \mathbb{H}^{k-2i}(Z, \mathbb{C}_{Z}(1-i))\bigg]\nonumber\\
&=&
\mathbb{H}^{k}(X, \mathbb{C}_{X}(1))
\oplus
\bigg[\bigoplus\limits_{i=1}^{c-1}   H_{dR}^{k-2i}(Z; \mathbb{C})\bigg],
\end{eqnarray}
for any $k\in \mathbb{N}$.
From Proposition \ref{pluri-har}, \eqref{map-d-com}, \eqref{b-l-app} and \eqref{c-l-app}
we get a commutative diagram of finite-dimensional complex vector spaces:
\begin{equation}\label{vai-1-0}
\vcenter{
\xymatrix{
  H^{k-1}(\tilde{X},\mathcal{H}_{\tilde{X}}) \ar[d]_{\tilde{\mathfrak{C}}^{k}}
  \ar[r]_{}^{\simeq\qquad\qquad} & H^{k-1}(X,\mathcal{H}_{X})\oplus
   \bigg[\bigoplus\limits_{i=1}^{c-1}   H_{dR}^{k-2i}(Z; \mathbb{C})\bigg] \ar[d]^{\mathfrak{C}^{k}\oplus\mathrm{id}} \\
  H^{k-1}(\tilde{X},\Omega_{\tilde{X}}^{\bullet\geq1})
  \ar[r]_{}^{\simeq\qquad\qquad\quad} & H^{k-1}(X,\Omega_{X}^{\bullet\geq1})\oplus
  \bigg[\bigoplus\limits_{i=1}^{c-1}  H_{dR}^{k-2i}(Z; \mathbb{C})\bigg]. }}
\end{equation}
It follows from the commutativity of \eqref{vai-1-0} that $\ker\,\tilde{\mathfrak{C}}^{k}$ (resp. $\mathrm{coker}\,\tilde{\mathfrak{C}}^{k}$) is isomorphic to $\ker\,\mathfrak{C}^{k}$ (resp. $\mathrm{coker}\,\mathfrak{C}^{k}$).
Particularly, due to Proposition \ref{pluri-har} and Lemma \ref{equal-cohom-BCcomplex} we obtain that, for each integer $k\geq1$, we have:
\begin{eqnarray*}
\spadesuit^{k}(X)
&=&
\mathrm{dim}_{\mathbb{C}}\,\mathbb{H}^{k}(X,\mathbb{C}_{X}(1))
-
\mathrm{dim}_{\mathbb{C}}\,H^{k}_{BC}(X,\mathbb{C}_{X}(1,1)) \\
&=&
\dim_{\mathbb{C}}\, H^{k-1}(X,\Omega_{X}^{\bullet \geq 1})
-
\dim_{\mathbb{C}}\,H^{k-1}(X,\mathcal{H}_{X})\\
&=&
\dim_{\mathbb{C}}\, H^{k-1}(\tilde{X},\Omega_{\tilde{X}}^{\bullet \geq 1})
-
\dim_{\mathbb{C}}\,H^{k-1}(\tilde{X},\mathcal{H}_{\tilde{X}})\\
&=&\spadesuit^{k}(\tilde{X}).
\end{eqnarray*}
Due to the weak factorization theorem \cite[Theorem 0.3.1]{AKMW02}, we get that $\spadesuit^{k}(X)$ is a bimeromorphic invariant.
\end{proof}

Likewise,
by comparing the blow-up formulae of Bott--Chern hypercohomology and de Rham cohomology,
we have

\begin{thm}\label{real-inv}
Let $X$ be a compact complex manifold of dimension $n\geq 2$.
Then the integer
$$
\clubsuit^{k}(X)
:= \dim_{\mathbb{C}} H^{k}_{dR}(X; \mathbb{C})
-\dim_{\mathbb{C}}\,H^{k-1}(X,\mathcal{H}_{X})
$$
is a bimeromorphic invariant of $X$.
\end{thm}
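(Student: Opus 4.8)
The plan is to mimic the argument just used for Theorem \ref{birat-inv-thm1}, replacing the truncated holomorphic de Rham cohomology $H^{k-1}(X,\Omega_{X}^{\bullet\geq 1})$ by the full de Rham cohomology $H^{k}_{dR}(X;\mathbb{C})$. Recall from Proposition \ref{pluri-har} and Lemma \ref{equal-cohom-BCcomplex} that $\dim_{\mathbb{C}} H^{k-1}(X,\mathcal{H}_{X}) = \dim_{\mathbb{C}}\mathbb{H}^{k-1}(X,\mathscr{L}_{X}^{\bullet}(1,1)) = \dim_{\mathbb{C}} H^{k}_{BC}(X,\mathbb{C}(1,1))$, so that $\clubsuit^{k}(X) = \dim_{\mathbb{C}} H^{k}_{dR}(X;\mathbb{C}) - \dim_{\mathbb{C}} H^{k}_{BC}(X,\mathbb{C}(1,1))$. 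It therefore suffices to show both quantities change by the same amount under a single blow-up $\pi:\tilde{X}\to X$ with smooth center $Z$ of codimension $c\geq 2$, and then invoke the weak factorization theorem \cite{AKMW02} exactly as before.

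First I would record the blow-up formula for Bott--Chern hypercohomology at bi-degree $(1,1)$. By Theorem \ref{main-thm1}, since $\mathscr{B}_{Z}^{\bullet}(1-i,1-i) \simeq \mathbb{C}_{Z}$ for $i\geq 1$ and hence $H^{k-2i}_{BC}(Z,\mathbb{C}(1-i,1-i)) = H^{k-2i}_{dR}(Z;\mathbb{C})$, we obtain
\begin{equation*}
H_{BC}^{k}(\tilde{X},\mathbb{C}(1,1)) \cong H_{BC}^{k}(X,\mathbb{C}(1,1)) \oplus \bigg[\bigoplus_{i=1}^{c-1} H_{dR}^{k-2i}(Z;\mathbb{C})\bigg],
\end{equation*}
which is precisely \eqref{b-l-app}. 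On the other side, the classical blow-up formula for de Rham cohomology of compact complex manifolds gives
\begin{equation*}
H_{dR}^{k}(\tilde{X};\mathbb{C}) \cong H_{dR}^{k}(X;\mathbb{C}) \oplus \bigg[\bigoplus_{i=1}^{c-1} H_{dR}^{k-2i}(Z;\mathbb{C})\bigg];
\end{equation*}
this is standard (it follows, e.g., from the description of the cohomology of a blow-up, and can also be deduced within the present sheaf-theoretic framework by taking $p=q$ large in Theorem \ref{hol-bl-u-f} together with its anti-holomorphic counterpart, or directly from \cite{Voi02}). The two correction terms $\bigoplus_{i=1}^{c-1} H_{dR}^{k-2i}(Z;\mathbb{C})$ are literally identical, so taking dimensions and subtracting yields
\begin{equation*}
\dim_{\mathbb{C}} H_{dR}^{k}(\tilde{X};\mathbb{C}) - \dim_{\mathbb{C}} H_{BC}^{k}(\tilde{X},\mathbb{C}(1,1)) = \dim_{\mathbb{C}} H_{dR}^{k}(X;\mathbb{C}) - \dim_{\mathbb{C}} H_{BC}^{k}(X,\mathbb{C}(1,1)),
\end{equation*}
that is $\clubsuit^{k}(\tilde{X}) = \clubsuit^{k}(X)$. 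The weak factorization theorem \cite[Theorem 0.3.1]{AKMW02} then upgrades this stability under blow-ups with smooth centers to full bimeromorphic invariance, completing the proof.

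The only genuinely non-routine point is making sure that the de Rham blow-up formula is being applied with the same indexing conventions and the same identification of the exceptional contribution as in the Bott--Chern formula of Theorem \ref{main-thm1}; once both correction terms are seen to be $\bigoplus_{i=1}^{c-1} H_{dR}^{k-2i}(Z;\mathbb{C})$, the cancellation is immediate. (One could alternatively package everything into a commutative diagram in the spirit of \eqref{vai-1-0}, using a natural comparison morphism $H^{k-1}(X,\mathcal{H}_X)\to H^{k}_{dR}(X;\mathbb{C})$ and deducing invariance of its kernel and cokernel, which also reproves (ii) of Theorem \ref{thm1}; but for the dimension statement the bare additive computation above suffices.)
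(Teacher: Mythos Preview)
Your proposal is correct and follows essentially the same approach as the paper: both compare the de Rham blow-up formula with the Bott--Chern hypercohomology blow-up formula at bi-degree $(1,1)$ from Theorem \ref{main-thm1}, observe that the correction terms $\bigoplus_{i=1}^{c-1} H_{dR}^{k-2i}(Z;\mathbb{C})$ coincide, and conclude via weak factorization. Your write-up even makes the identification $\dim H^{k-1}(X,\mathcal{H}_X)=\dim H^{k}_{BC}(X,\mathbb{C}(1,1))$ slightly more explicit than the paper does.
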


\begin{proof}
According to the blow-up formula of de Rham cohomology,
there exists an isomorphism
\begin{equation}\label{deR-app}
H_{dR}^{k}(\tilde{X}; \mathbb{C})
\cong   H_{dR}^{k}(X; \mathbb{C})
\oplus  \bigg[\bigoplus \limits_{i=1}^{c-1} H_{dR}^{k-2i}(Z; \mathbb{C})\bigg].
\end{equation}
By \eqref{b-l-app},
we have
\begin{equation}\label{bc-l-app}
H_{BC}^{k}(\tilde{X}, \mathbb{C}(1,1))
\cong
H_{BC}^{k}(X, \mathbb{C}(1,1))
\oplus
\bigg[\bigoplus \limits_{i=1}^{c-1} H_{dR}^{k-2i}(Z; \mathbb{C})\bigg].
\end{equation}
Comparing \eqref{deR-app} and \eqref{bc-l-app},
we get
$$
\clubsuit^{k}(\tilde{X})=\clubsuit^{k}(X)
$$
and therefore we conclude the proof by the weak factorization theorem (\cite[Theorem 0.3.1]{AKMW02}).
\end{proof}

\begin{rem}
When $k\geq n+2$, from definition, we have
$$
H^{k}_{BC}(X,\mathbb{C}_{X}(1,1))
\cong
\mathbb{H}^{k}(X,\mathbb{C}_{X}(1))
\cong
H_{dR}^{k}(X; \mathbb{C}),
$$
and therefore the bimeromorphic invariants $\spadesuit^{k}(X)$ and $\clubsuit^{k}(X)$ are equal to zero for $k\geq n+2$.
\end{rem}

Let $X$ be a compact complex manifold of complex dimension $n$.
By definition, the identify maps induce natural morphisms among the Bott--Chern, de Rham, Dolbeault, $\partial$ and Appeli cohomology as follows.
\begin{equation*}
\xymatrix@C=0.5cm{
 & H^{\bullet,\bullet}_{BC}(X) \ar[d]\ar[ld]\ar[rd] & \\
 H^{\bullet,\bullet}_{\partial}(X) \ar[rd] & H^{\bullet}_{dR}(X;\mathbb{C}) \ar[d] & H^{\bullet,\bullet}_{\bar{\partial}}(X) \ar[ld] \\
 & H^{\bullet,\bullet}_{A}(X) &
}
\end{equation*}
We denote the natural morphism by
\begin{eqnarray*}
\mathfrak{I}^{p,q}: H_{BC}^{p,q}(X) &\longrightarrow& H_{\bar{\partial}}^{p,q}(X)\\
{[\alpha]}_{BC} &\longmapsto& {[\alpha]}_{\bar{\partial}}
\end{eqnarray*}
for any $0\leq p,q\leq n$.
In general, the maps $\mathfrak{I}^{p,q}$ are neither surjective nor injective.
The morphisms $\mathfrak{I}^{p,q}$ are injective for all $0\leq p,q\leq n$ if and only if $X$ satisfies the $\partial\bar{\partial}$-lemma.
In three dimensional cases, as an application of Theorem \ref{main-thm1}, we have

\begin{thm}\label{bc-d-inv-1}
Suppose that $X$ is a compact complex threefold.
For any $0\leq p,q\leq 3$,
both the kernel and the cokernel of the natural map
$$
\mathfrak{I}^{p,q}:H_{BC}^{p,q}(X)\longrightarrow H_{\bar{\partial}}^{p,q}(X)
$$
are stable under bimeromorphisms.
\end{thm}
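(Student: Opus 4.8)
The plan is as follows. By the weak factorization theorem \cite[Theorem 0.3.1]{AKMW02}, any bimeromorphic equivalence between compact complex threefolds factors as a composition of blow-ups and blow-downs with smooth centers, and every manifold occurring in such a factorization is again a compact complex threefold; hence it suffices to check that $\ker\mathfrak{I}^{p,q}$ and $\mathrm{coker}\,\mathfrak{I}^{p,q}$ are preserved under a single blow-up $\pi\colon\tilde{X}\to X$ along a smooth center $Z$ of codimension $c\geq 2$. Since $\dim X=3$ we have $\dim Z=3-c\leq 1$, so $Z$ is a finite disjoint union of points and smooth compact curves; in particular $Z$ is K\"ahler and satisfies the $\partial\bar\partial$-lemma, so by \cite{DGMS} each natural map $\mathfrak{I}^{r,s}_{Z}\colon H^{r,s}_{BC}(Z)\to H^{r,s}_{\bar\partial}(Z)$ is an isomorphism.

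The core of the argument is to compare, in total degree $k=p+q$, the Bott--Chern blow-up formula of Theorem \ref{main-thm1},
$$
H_{BC}^{p,q}(\tilde{X})\;\stackrel{\Phi}{\cong}\;H_{BC}^{p,q}(X)\oplus\bigoplus_{i=1}^{c-1}H_{BC}^{p-i,q-i}(Z),
$$
with the classical Dolbeault blow-up formula
$$
H_{\bar{\partial}}^{p,q}(\tilde{X})\;\stackrel{\Phi^{\bar\partial}}{\cong}\;H_{\bar{\partial}}^{p,q}(X)\oplus\bigoplus_{i=1}^{c-1}H_{\bar{\partial}}^{p-i,q-i}(Z)
$$
(cf. \cite{Men2}), which one may obtain by the same sheaf-theoretic method with the Bott--Chern projective bundle formula replaced by its Dolbeault analogue. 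I would then prove that these two isomorphisms fit into a commutative square whose horizontal arrows are $\mathfrak{I}^{p,q}_{\tilde{X}}$ and $\mathfrak{I}^{p,q}_{X}\oplus\bigoplus_{i}\mathfrak{I}^{p-i,q-i}_{Z}$. Since $\mathfrak{I}$ is induced by the identity on differential forms, it commutes with the pullbacks $\pi^{\star}$ and $\rho^{\star}$, with the proper-modification pushforward $\pi_{\star}$ appearing in the definition \eqref{BC-explicit-map} of $\Phi$ — the Dolbeault pushforward being defined through the very same pushforward of $(p,q)$-currents, as in the proof of Proposition \ref{cohom-BCcomplex-injective} — and with cup product by $h=c_{1}(\mathcal{O}_{E}(1))$, which is represented by a $d$-closed real $(1,1)$-form. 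Granting the square, taking kernels and cokernels row by row and using that the $Z$-terms contribute trivially (by the first paragraph) yields $\ker\mathfrak{I}^{p,q}_{\tilde{X}}\cong\ker\mathfrak{I}^{p,q}_{X}$ and $\mathrm{coker}\,\mathfrak{I}^{p,q}_{\tilde{X}}\cong\mathrm{coker}\,\mathfrak{I}^{p,q}_{X}$, and the weak factorization theorem gives the bimeromorphic invariance.

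The main obstacle will be verifying the compatibility square, i.e. that the explicit isomorphism $\Phi$ of \eqref{BC-explicit-map} and a suitably chosen Dolbeault blow-up isomorphism are intertwined by $\mathfrak{I}$. I would handle this by realizing the Dolbeault decomposition through exactly the same pullback / current-pushforward / cup-with-$h$ recipe that defines $\Phi$, working throughout on the fine complexes $\mathscr{L}_{X}^{\bullet}(p,q)$, their relative versions, and their current analogues, and describing $\mathfrak{I}$ as the morphism of hypercohomology that sends a Bott--Chern representative to its Dolbeault class; one then propagates the three elementary compatibilities (with $\pi^{\star}$, with $\pi_{\star}$, with $h^{i}\wedge\rho^{\star}$) through the diagram chase used in the proof of Theorem \ref{main-thm1}. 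The remaining points — finiteness of all dimensions, the rank bookkeeping, and the reduction to blow-ups — are routine.
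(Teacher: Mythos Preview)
Your proposal is correct and follows essentially the same strategy as the paper: reduce to a single blow-up via weak factorization, use that the center $Z$ has dimension $\leq 1$ and hence satisfies the $\partial\bar\partial$-lemma so that $\mathfrak{I}_{Z}^{\bullet,\bullet}$ is an isomorphism, and then compare the explicit Bott--Chern and Dolbeault blow-up isomorphisms via a commutative square (the paper cites \cite[Theorem 1.2]{RYY20} rather than \cite{Men2} for the Dolbeault side, but the argument is identical). Your discussion of the ``main obstacle'' in fact spells out the compatibility that the paper simply asserts by invoking the explicit form of $\Phi$ in both theories.
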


\begin{proof}
Let $Z$ be a closed complex submanifold of $X$ with complex codimension $c$ ($c=2, 3$) and $\pi:\tilde{X}\rightarrow X$ the blow-up of $X$ with the center $Z$.
Since $X$ is of $3$-dimensional, the center $Z$ is a point or a complex curve and therefore the natural map
$$
\mathfrak{I}_{Z}^{\bullet,\bullet}:H_{BC}^{\bullet,\bullet}(Z)
\longrightarrow H_{\bar{\partial}}^{\bullet,\bullet}(Z)
$$
is isomorphic.
On account of Theorem \ref{main-thm1} and the explicit Dolbeault blow-up formula \cite[Theorem 1.2]{RYY20}, we obtain the following commutative diagram:
\begin{equation}\label{bc-d}
\vcenter{
\xymatrix{
  H^{p,q}_{BC}(\tilde{X}) \ar[d]_{\tilde{\mathfrak{I}}^{p,q}}
  \ar[r]_{}^{\simeq\qquad\qquad\quad} & H^{p,q}_{BC}(X)
   \oplus \bigg[\bigoplus\limits_{i=0}^{c-1} H_{BC}^{p-i,q-i}(Z)\bigg] \ar[d]^{\mathfrak{I}^{p,q}\oplus\mathfrak{I}_{Z}^{\bullet,\bullet}} \\
  H^{p,q}_{\bar{\partial}}(\tilde{X})
  \ar[r]_{}^{\simeq\qquad\qquad\quad} &
  H^{p,q}_{\bar{\partial}}(X)
  \oplus \bigg[\bigoplus\limits_{i=0}^{c-1} H_{\bar{\partial}}^{p-i,q-i}(Z)\bigg].}}
\end{equation}
As $\mathfrak{I}_{Z}^{\bullet,\bullet}$ is an isomorphism, the commutativity of \eqref{bc-d} implies that $\ker\,\mathfrak{I}$ (resp. $\mathrm{coker}\,\mathfrak{I}$) is isomorphic to $\ker\,\tilde{\mathfrak{I}}$ (resp. $\mathrm{coker}\,\tilde{\mathfrak{I}}$), and combining with the weak factorization theorem concludes the proof.
\end{proof}

It is noteworthy that $H^{p,q}_{BC}(X)$ and $H^{p,q}_{\bar{\partial}}(X)$ are not bimeromorphic invariants unless $p=0$ or $q=0$.
As a corollary of Theorem \ref{bc-d-inv-1},
for compact complex threefolds, the integer
$$
\Delta_{BC,\bar{\partial}}^{p,q}(X)=
h_{BC}^{p,q}(X)-h_{\bar{\partial}}^{p,q}(X), \; \forall\; 0\leq p,q\leq 3
$$
is stable under blow-ups and hence is a bimeromorphic invariant by the weak factorization theorem, see also \cite[Corollary 1.5]{RYY17}.
Likewise, we can construct the bimeromorphic invariants of threefolds via the natural morphism
\begin{equation}\label{bjmp}
\mathfrak{I}^{\bullet,\bullet}:
H_{BC}^{\bullet,\bullet}(X)
\longrightarrow
H_{\bigstar}^{\bullet,\bullet}(X),
\end{equation}
where  $\bigstar\in \{\partial, A\}$.
In general, for a compact complex manifold $X$ with complex dimension $\geq4$, the kernel and the cokernel of the natural map \eqref{bjmp} are not bimeromorphic invariants.
The reason lies in the fact that for different cohomologies the contributions of the center $Z$ in the blow-up formulae are not equivalent to each other.

According to a result of Stelzig \cite{Ste21c}, we know that if a bimeromorphic invariant is a universal $\mathbb{Z}$-linear combination or congruence of Hodge and Chern numbers, then it is a linear combination or congruence of the $(p,0)$ or $(0,p)$-Hodge numbers only.
In view of the important role of the Bott--Chern (hyper) cohomology in non-K\"{a}hler complex geometry, it is natural to consider the following:
\begin{prob}
Which linear combinations or congruences of Bott--Chern and Hodge numbers are bimeromorphic invariants of compact complex manifolds without the $\partial\bar{\partial}$-lemma?
\end{prob}

For a compact complex manifold $X$ with $\mathrm{dim}_{\mathbb{C}}(X)\leq4$, Stelzig \cite[Theorem F]{Ste21c} determined all linear combinations of cohomological invariants which admit the bimeromorphic invariance.
Comparing Theorems \ref{thm1}-\ref{bc-d-inv-0} with \cite[Theorem F]{Ste21c}, an interesting problem is:\footnote{This problem is suggested to us by Stelzig \cite{Ste22b}.}
\begin{prob}\label{prob4.2}
Let $X$ be a compact complex threefold or fourfold, then which zigzags the invariants $\spadesuit^{\ast}(X)$ and $\clubsuit^{\ast}(X)$ count exactly?
\end{prob}

\section{Examples}\label{example}

In this section, we first compute the Bott--Chern hypercohomology groups for some compact complex surfaces and threefolds, and then we consider their invariants defined in Theorem \ref{thm1}.
In general, it is difficult to compute the Bott--Chern hypercohomology of a higher dimensional compact complex manifold explicitly.
However, thanks to Lemma \ref{equal-cohom-BCcomplex},
we can write down the Bott--Chern hypercohomology groups of surfaces and threefolds
via the fine sheaf complex $\mathscr{L}^{\bullet}(p,q)$.
To be more specific, we have the following tables which record the Bott--Chern hypercohomology groups:
$$
\tiny{
\left.\begin{array}{|c|c|c|c|}
\hline \textrm{surface}  & \mathscr{B}_{S}^{\bullet}(1,1) & \mathscr{B}_{S}^{\bullet}(1,2) & \mathscr{B}_{S}^{\bullet}(2,2) \\
\hline H_{BC}^{1} & \mathbb{C} & \mathbb{C} & \mathbb{C} \\
\hline H_{BC}^{2} & H_{BC}^{1,1}(S) & H_{A}^{0,1}(S) & \frac{\ker(d:A^{1,0}\oplus A^{0,1}\rightarrow A^{1,1})}{d(A^{0,0})}  \\
\hline H_{BC}^{3} & \frac{\ker(d: A^{2,1}\oplus A^{1,2}\rightarrow A^{2,2})}{d(A^{1,1})} & H_{BC}^{1,2}(S) & H_{A}^{1,1}(X)
 \\
\hline H_{BC}^{4} & \mathbb{C} & \mathbb{C} & \mathbb{C} \\
\hline \end{array}\right.
}
$$
$$
\tiny{
\left.\begin{array}{|c|c|c|c|c|c|c|}
\hline \textrm{threefold}  & \mathscr{B}_{X}^{\bullet}(1,1) & \mathscr{B}_{X}^{\bullet}(1,2) & \mathscr{B}_{X}^{\bullet}(1,3) & \mathscr{B}_{X}^{\bullet}(2,2) & \mathscr{B}_{X}^{\bullet}(2,3) & \mathscr{B}_{X}^{\bullet}(3,3) \\
\hline H_{BC}^{1} & \mathbb{C} & \mathbb{C} & \mathbb{C} & \mathbb{C} & \mathbb{C} & \mathbb{C} \\
\hline H_{BC}^{2} & H_{BC}^{1,1}(X) & H_{A}^{0,1}(X) & H_{\bar{\partial}}^{0,1}(X)& H_{dR}^{1}(X; \mathbb{C}) & \frac{\ker(d:A^{1}\rightarrow A^{1,1})}{dA^{0,0}} & H_{dR}^{1}(X; \mathbb{C}) \\
\hline H_{BC}^{3} & \frac{\ker d|_{A^{2,1}\oplus A^{1,2}}}{d(A^{1,1})} & H_{BC}^{1,2}(X) & H_{A}^{0,2}(X) & H_{A}^{1,1}(X) & \frac{\ker(d:A^{1,1}\oplus A^{0,2}\rightarrow A^{1,2})}{\bar{\partial}A^{1,0}+dA^{0,1}} & \frac{\ker(d:A^{2}\rightarrow A^{2,1}\oplus A^{1,2})}{dA^{1}} \\
\hline H_{BC}^{4} & \frac{\ker d|_{A^{4}}}{d(A^{2,1}\oplus A^{1,2})} & \frac{\ker d|_{A^{2,2}\oplus A^{1,3}}}{d(A^{1,2})} & H_{BC}^{1,3}(X) & H_{BC}^{2,2}(X) & H_{A}^{1,2}(X) & \frac{\ker(d:A^{2,1}\oplus A^{1,2}\rightarrow A^{2,2})}{\bar{\partial}A^{2,0}+dA^{1,1}+\partial A^{0,2})} \\
\hline H_{BC}^{5} & H_{dR}^{5}(X; \mathbb{C}) & \frac{\ker d|_{A^{5}}}{d(A^{2,2}\oplus A^{1,3})} & H_{\partial}^{2,3}(X) & \frac{\ker d|_{A^{5}}}{d(A^{2,2})} & H_{BC}^{2,3}(X) & H_{A}^{2,2}(X)\\
\hline H_{BC}^{6} & H_{dR}^{6}(X; \mathbb{C}) & H_{dR}^{6}(X; \mathbb{C}) & H_{dR}^{6}(X; \mathbb{C}) & H_{dR}^{6}(X; \mathbb{C}) & H_{dR}^{6}(X; \mathbb{C}) &H_{dR}^{6}(X; \mathbb{C}) \\
\hline \end{array}\right.
}
$$
where $A^{k}:=\Gamma(X, \mathcal{A}_{X}^{k})$ and $A^{s,t}:=\Gamma(X, \mathcal{A}_{X}^{s,t})$.

Let $\mathcal{E}^{\bullet}$ be a sheaf complex of $\mathbb{C}$-modules on a compact complex manifold $X$ which has finite-dimensional hypercohomology groups.
By the Euler characteristics of $\mathcal{E}^{\bullet}$,
we mean the alternative sum of the dimensions of its hypercohomology groups, namely,
$$
\chi(\mathcal{E}^{\bullet})=
\sum_{i\in\mathbb{Z}}(-1)^{i}\mathrm{dim}_{\mathbb{C}}\,\mathbb{H}^{i}(X,\mathcal{E}^{\bullet}).
$$
Consider the short exact sequence sheaf complexes
\begin{equation}\label{example-ex-seq}
\xymatrix@C=0.5cm{
  0 \ar[r] & \bar{\mathcal{O}}_{X}[-1] \ar[r]^{} & \mathscr{B}^{\bullet}_{X}(1,1) \ar[r]^{} & \mathbb{C}_{X}(1) \ar[r] & 0. }
\end{equation}
The exactness of \eqref{example-ex-seq} implies
\begin{eqnarray}\label{BCcoHnumber-chi-1}
  \chi(\mathscr{B}_{X}^{\bullet}(1,1))
   &=& \chi(\mathbb{C}_{X}(1))+\chi(\bar{\mathcal{O}}_{X}[-1])
= \chi(\mathbb{C}_{X}(1))-\chi(X,\bar{\mathcal{O}}_{X})\nonumber \\
   &=& \chi(\mathbb{C}_{X}(1))-\chi(\mathcal{O}_{X}),
\end{eqnarray}
Likewise, consider the short exact sequence
\begin{equation*}\label{example-ex-seq-0}
\xymatrix@C=0.5cm{
  0 \ar[r] & (\mathcal{O}_{X}\oplus\bar{\mathcal{O}}_{X})[-1] \ar[r]^{} & \mathscr{B}^{\bullet}_{X}(1,1) \ar[r]^{} & \mathbb{C}_{X}\ar[r] & 0. }
\end{equation*}
We have
\begin{eqnarray}\label{BCcoHnumber-chi-2}
  \chi(\mathscr{B}_{X}^{\bullet}(1,1))
   &=& \chi(\mathbb{C}_{X})+\chi((\mathcal{O}_{X}\oplus\bar{\mathcal{O}}_{X}[-1]))
= \chi(\mathbb{C}_{X})-\chi(\mathcal{O}_{X}\oplus\bar{\mathcal{O}}_{X})\nonumber \\
   &=& \chi(\mathbb{C}_{X})-2\chi(\mathcal{O}_{X}).
\end{eqnarray}
In particular, if $X$ is a threefold, from \eqref{BCcoHnumber-chi-1} and \eqref{BCcoHnumber-chi-2},
we get
\begin{equation}\label{chi-1}
h_{BC}^{4}(X, \mathbb{C}(1,1))-h_{BC}^{3}(X, \mathbb{C}(1,1))
=\chi(\mathbb{C}_{X}(1))-\chi(\mathcal{O}_{X})-h_{BC}^{1,1}(X)+b_{5}(X)
\end{equation}
and
\begin{equation*}\label{chi-2}
h_{BC}^{4}(X, \mathbb{C}(1,1))-h_{BC}^{3}(X, \mathbb{C}(1,1))
=\chi(X)-2\chi(\mathcal{O}_{X})-h_{BC}^{1,1}(X)+b_{5}(X),
\end{equation*}
where $\chi(X)$ is the Euler characteristic of $X$ and $b_{5}(X)$ is the 5-th Betti number of $X$.
Recall that the sheaf complex
$$
\mathbb{C}_{X}(1):
\xymatrix@C=0.5cm{
  0 \ar[r] & \mathbb{C}_{X} \ar[r]^{} & \mathcal{O}_{X} \ar[r] & 0 }
$$
is quasi-isomorphic to the truncated holomorphic de Rham complex $\Omega^{\bullet\geq1}_{X}[-1]$.
Put $n=\dim_{\mathbb{C}}\,X$.
Due to the Serre duality for truncated holomorphic de Rham complexes \cite[Theorem 1.3]{Men1},
for any $0\leq l\leq n$, we have
\begin{equation*}\label{coHnumber-C(1)}
  \mathbb{H}^{l}(X,\mathbb{C}_{X}(1))
  \cong \mathbb{H}^{l}(X,\Omega^{\bullet\geq1}_{X}[-1])= \mathbb{H}^{l-1}(X,\Omega^{\bullet\geq1}_{X})\cong\mathbb{H}^{2n+1-l}(X,\Omega^{\bullet<n}_{X}).
\end{equation*}
If $n=3$, taking the Dolbeault resolution of $\Omega^{\bullet<3}_{X}$, we can compute $\mathbb{H}^{l}(X,\mathbb{C}_{X}(1))$ via the truncated Dolbeault double complex.
Moreover, we get
$$
\mathbb{H}^{5}(X,\mathbb{C}_{X}(1))
\cong
H_{dR}^{5}(X; \mathbb{C})
\cong
H_{BC}^{5}(X,\mathbb{C}(1,1))
$$
and
$$
\mathbb{H}^{6}(X,\mathbb{C}_{X}(1))
\cong
H_{dR}^{6}(X; \mathbb{C})
\cong
H_{BC}^{6}(X,\mathbb{C}(1,1)).
$$


\subsection{Compact complex surfaces}

Let $S$ be a compact complex surface.
Because of the $E_{1}$-degeneracy of the Fr\"{o}licher spectral sequence of $S$, the hypercohomology of the sheaf complex $\mathbb{C}_{S}(1)$ can be read off from the Dolbeault cohomology and hence we have:
$$
\dim_{\mathbb{C}}\mathbb{H}^{1}(S, \mathbb{C}_{S}(1))=h_{\bar{\partial}}^{1,0}(S),\,\,\,\;
\dim_{\mathbb{C}}\mathbb{H}^{2}(S, \mathbb{C}_{S}(1))=h_{\bar{\partial}}^{2,0}(S)+h_{\bar{\partial}}^{1,1}(S),
$$
$$
\dim_{\mathbb{C}}\mathbb{H}^{3}(S, \mathbb{C}_{S}(1))=b_{1}(S),\,\,\,\;
\dim_{\mathbb{C}}\mathbb{H}^{4}(S, \mathbb{C}_{S}(1))=h_{\bar{\partial}}^{2,2}(S).
$$
From \eqref{BCcoHnumber-chi-2}, we get
$\chi(\mathscr{B}_{S}^{\bullet}(1,1))=\chi(S)-2\chi(\mathcal{O}_{S})$
and this implies
$$
\dim_{\mathbb{C}}H_{BC}^{3}(S, \mathbb{C}(1,1))
=h_{BC}^{1,1}+2\chi(\mathcal{O}_{S})-\chi(S).
$$
As a result, the bimeromorphic invariants $\spadesuit^{\bullet}(S)$ are
\begin{eqnarray*}
\spadesuit^{1}(S)&=&h_{\bar{\partial}}^{1,0}(S)-1, \\
\spadesuit^{2}(S)&=&h_{\bar{\partial}}^{1,1}(S)-h_{BC}^{1,1}(S)+h_{\bar{\partial}}^{2,0}(S),\\
\spadesuit^{3}(S)&=&b_{1}(S)-h_{BC}^{1,1}-2\chi(\mathcal{O}_{S})+\chi(S).
\end{eqnarray*}
To be more specific, we have the following table of invariants for some classical compact complex surfaces.
$$
\left.\begin{array}{|c|c|c|c|c|}
\hline  S & \spadesuit^{1} & \spadesuit^{2} & \spadesuit^{3} & \spadesuit^{4} \\
\hline \PV^{2} & -1 & 0 & 0 & 0 \\
\hline \textrm{K3 surfaces} & -1 & 1 & 0 & 0 \\
\hline \textrm{Torus} & 1 & 1 & 0 & 0 \\
\hline \textrm{Primary Kodaria} & 0 & 0 & 0 & 0 \\
\hline \textrm{Second Kodaria} & -1 & -1 & 0 & 0 \\
\hline
\end{array}\right.
$$

\subsection{Complex nilmanifolds}
In this subsection, we will focus on some three-dimensional complex nilmanifolds.
First we review some basics on complex nilmanifolds.
Let $G$ be a simply-connected nilpotent Lie group with Lie algebra $\mathfrak{g}$ and $\Gamma\subset G$ a lattice with maximal rank.
The quotient space $X:=\Gamma\setminus G$ is called a \emph{real nilmanifold}.
Owing to a result by Nomizu \cite{No54}, there exists an isomorphism from the
Lie-algebra de Rham cohomology of $\mathfrak{g}$ to the de Rham cohomology of $X$, which was induced by the natural inclusion of left-invariant differential forms in the de Rham complex of $X$.
Moreover, if $\mathfrak{g}$ is endowed with an invariant complex structure $J$ and set $\mathfrak{g}_{\mathbb{C}}:=\mathfrak{g}\otimes_{\mathbb{R}} \mathbb{C}$,
then $(X,J)$ becomes a compact complex manifold called the \emph{complex nilmanifold}.
Moreover, it is conjectured that the natural inclusion of complexes
$$\iota: (\wedge^{p,\bullet}(\mathfrak{g}_{\mathbb{C}}^{\ast}), \bar{\partial})
\hookrightarrow (\Gamma(X, \mathcal{A}_{X}^{p,\bullet}),  \bar{\partial})$$
is a quasi-isomorphism,
namely, the natural inclusion  $\iota$ induces an isomorphism
\begin{equation}\label{nil-Lie-Dol-cohom}
\iota:H^{p,q}_{\bar{\partial}}(\mathfrak{g})
\stackrel{\simeq}\longrightarrow
H^{p,q}_{\bar{\partial}}(X)
\end{equation}
for any $p,q\in \mathbb{N}$, see \cite{CFGU00,CF01,Rol09} etc.
Based on this isomorphism,
the various cohomology of complex nilmanifolds can be computed in terms of correspondent Lie-algebra tcohomology, we refer to \cite{Sak76,CFGU00,CF01,Rol09,Ang13,COUV16,FRR19} and the references therein.
For our purpose, if \eqref{nil-Lie-Dol-cohom} holds, by the standard spectral sequence theory,
the truncated de Rham cohomology $\mathbb{H}^{\bullet}(X, \mathbb{C}_{X}(1))$ can be computed by using the Lie-algebra truncated de Rham cohomology.
Put $\mathscr{L}_{\mathfrak{g}_{\mathbb{C}}^{\ast}}^{\bullet}(1,1)$ the complex of vector spaces analogous to $\mathscr{L}_{X}^{\bullet}(1,1)$, and denote by
$H_{BC}^{l}(\mathfrak{g}_{\mathbb{C}}^{\ast};\mathbb{C}(1,1))
:=H^{l-1}(\mathscr{L}_{\mathfrak{g}_{\mathbb{C}}^{\ast}}(1,1))$
the associated Lie algebra Bott--Chern hypercohomolgy.
\begin{lem}\label{nil-Lie-BC-hypercoh}
If the isomorphism \eqref{nil-Lie-Dol-cohom} holds, then the natural inclusion $\iota$ induces an isomorphism of Bott--Chern hypercohomolgy
\begin{equation}\label{nil-Lie-BC-hypercoh-mor}
\iota:
H_{BC}^{l}(\mathfrak{g}_{\mathbb{C}}^{\ast};\mathbb{C}(1,1))
\stackrel{\simeq}\longrightarrow
H_{BC}^{l}(X, \mathbb{C}(1,1))
\end{equation}
for any $l\in \mathbb{N}$.
\end{lem}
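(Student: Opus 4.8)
The plan is to compare, through the inclusion $\iota$ of left-invariant forms, the short exact sequence of complexes underlying Bott--Chern hypercohomology at the manifold level with its Lie-algebra analogue, and then to reduce the statement to two facts already in hand: the Dolbeault isomorphism \eqref{nil-Lie-Dol-cohom} and its consequence for the truncated holomorphic de Rham complex recorded in the paragraph preceding the lemma.

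First I would fix the Lie-algebra models. By Lemma \ref{equal-cohom-BCcomplex} together with Proposition \ref{pluri-har}, the group $H_{BC}^{l}(X,\mathbb{C}(1,1))$ is the $(l-1)$-st cohomology of the global-section complex $\Gamma(X,\mathscr{L}_{X}^{\bullet}(1,1))$, while by definition $H_{BC}^{l}(\mathfrak{g}_{\mathbb{C}}^{\ast};\mathbb{C}(1,1))=H^{l-1}(\mathscr{L}_{\mathfrak{g}_{\mathbb{C}}^{\ast}}^{\bullet}(1,1))$, the complex of left-invariant forms being a subcomplex of $\Gamma(X,\mathscr{L}_{X}^{\bullet}(1,1))$ which is stable under all the operators $\delta_{\bullet}$ (these are built from $d$, $\partial$, $\bar{\partial}$ and bidegree projections, all of which preserve left-invariance). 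Next I would build, out of left-invariant forms, the analogue of the short exact sequence \eqref{example-ex-seq}
\[
0\longrightarrow\bar{\mathcal{O}}_{X}[-1]\longrightarrow\mathscr{B}_{X}^{\bullet}(1,1)\longrightarrow\mathbb{C}_{X}(1)\longrightarrow 0,
\]
whose three terms compute, respectively, $H^{\bullet-1}(X,\bar{\mathcal{O}}_{X})$, the Bott--Chern hypercohomology, and $\mathbb{H}^{\bullet}(X,\mathbb{C}_{X}(1))$; the inclusion $\iota$ places the two short exact sequences into a commutative ladder and hence induces a morphism between the two associated long exact sequences, in which the middle vertical is exactly \eqref{nil-Lie-BC-hypercoh-mor}.

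The heart of the argument is then to check that $\iota$ is an isomorphism on the two outer terms. For the anti-holomorphic term, conjugating \eqref{nil-Lie-Dol-cohom} in bidegree $(0,q)$ shows that $\iota$ gives isomorphisms $H^{q,0}_{\partial}(\mathfrak{g})\stackrel{\simeq}{\longrightarrow}H^{q,0}_{\partial}(X)=H^{q}(X,\bar{\mathcal{O}}_{X})$ for all $q$, since $\bar{\mathcal{O}}_{X}$ is resolved by $(\mathcal{A}_{X}^{\bullet,0},\partial)$. For the truncated holomorphic de Rham term, I would use that $\mathbb{C}_{X}(1)$ is quasi-isomorphic to $\Omega_{X}^{\bullet\geq1}[-1]$ and that its hypercohomology is the abutment of the truncated Fr\"{o}licher spectral sequence whose $E_{1}$-page consists of the Dolbeault groups $H^{s,t}_{\bar{\partial}}$ with $s\geq1$; by \eqref{nil-Lie-Dol-cohom} the map $\iota$ is an isomorphism on this $E_{1}$-page, hence on the abutment, which is the assertion already noted before the lemma. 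With both outer verticals isomorphic, the five lemma applied to the ladder of long exact sequences forces \eqref{nil-Lie-BC-hypercoh-mor} to be an isomorphism for every $l$.

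The main obstacle I anticipate is bookkeeping rather than anything conceptual: there are no genuine sheaves over the Lie algebra, so one must set up by hand the left-invariant counterparts of $\mathscr{B}_{X}^{\bullet}(1,1)$, of $\mathbb{C}_{X}(1)$, and of the fine resolutions used to identify them (the Dolbeault resolution of $\Omega_{X}^{p}$ and the resolution of $\mathcal{H}_{X}$ in Proposition \ref{pluri-har}), and one must verify that each quasi-isomorphism invoked restricts to left-invariant forms and is compatible with the connecting homomorphisms of the two long exact sequences. Once this compatibility is installed, the five lemma closes the argument.
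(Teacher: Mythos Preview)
Your approach is correct and genuinely different from the paper's. The paper proves the lemma by a direct Hodge-theoretic argument in the style of \cite{CF01} and \cite[Lemma~3.6, Theorem~3.7]{Ang13}: it fixes a $G$-left-invariant Hermitian metric, observes that the operator $\Delta_{l}=\delta_{l}^{\ast}\delta_{l}+\delta_{l-1}\delta_{l-1}^{\ast}$ preserves left-invariant forms, and then shows injectivity of \eqref{nil-Lie-BC-hypercoh-mor} by an orthogonality trick; surjectivity is obtained by starting from Nomizu's isomorphism for de~Rham cohomology and explicitly correcting the bidegree components of a representative, using that solutions of $\bar\partial\psi=\phi$ (and of $\partial\psi=\phi$) for left-invariant $\phi$ are left-invariant up to exact terms. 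Your route instead leverages the short exact sequence \eqref{example-ex-seq} together with the two facts already recorded in the text (the Dolbeault isomorphism \eqref{nil-Lie-Dol-cohom} and its spectral-sequence consequence for $\mathbb{H}^{\bullet}(X,\mathbb{C}_{X}(1))$), and closes with the five lemma. The advantage of your argument is that it is more conceptual and recycles statements the paper has in hand; the cost is exactly the bookkeeping you flag---since there are no sheaves over $\mathfrak{g}$, the left-invariant analogue of \eqref{example-ex-seq} is most cleanly realised as the distinguished triangle coming from the chain map $\mathfrak{C}$ of \eqref{map-d-com} restricted to invariant forms (with the cone identified, via \eqref{nil-Lie-Dol-cohom} and conjugation, with the invariant $(\wedge^{\bullet,0},\partial)$-complex up to shift), and one must check that $\iota$ intertwines this triangle with the manifold-level one so that the connecting maps match. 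The paper's approach avoids that setup at the price of the explicit form manipulations; yours avoids the form manipulations at the price of the triangle bookkeeping.
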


\begin{proof}
According to \cite[Theorem 3.7]{Ang13}, the assertion is true for $l=2$ which is the isomorpohism between the Lie-algebra Bott--Chern cohomology $H_{BC}^{1,1}(\mathfrak{g}_{\mathbb{C}}^{\ast})$ and the Bott--Chern cohomology $H_{BC}^{1,1}(X)$.
Consider the case of $l\geq 3$.
We first verify the injectivity of \eqref{nil-Lie-BC-hypercoh-mor}.
The strategy of the proof is the same as the one used in the proofs of \cite[Lemma 9]{CF01} and \cite[Lemma 3.6]{Ang13}.
Suppose $g$ is a $G$-left-invariant Hermitian metric on $X$.
Observe that both $d$ and $d^{\ast}$ preserve the $G$-left-invariant forms, and so is the operator $\Delta_{l}=\delta^{\ast}_{l}\delta_{l}+\delta_{l-1}\delta^{\ast}_{l-1}$.
Therefore, by Hodge's theorem, there exists a decomposition of $G$-left-invariant forms
$$
\mathscr{L}^{l}_{\mathfrak{g}_{\mathbb{C}}^{\ast}}(1,1)=\bigoplus\limits_{s+t=l+1\atop{s\geq p,t\geq q}} \bigwedge^{s,t} \mathfrak{g}_{\mathbb{C}}^{\ast}
=\ker\, \Delta_{l}\, \oplus \, \mathrm{im}\,\delta_{l-1}\oplus \, \mathrm{im}\,\delta^{\ast}_{l}.
$$
Assume that $[\alpha]$ is a class in $H_{BC}^{l}(\mathfrak{g}_{\mathbb{C}}^{\ast};\mathbb{C}(1,1))$ satisfying
 $\iota([\alpha])=0\in H_{BC}^{l}(X, \mathbb{C}(1,1))$.
This implies that there exists a form
$$\beta\in \mathscr{L}^{l-1}_{X}(1,1)=\bigoplus\limits_{s+t=l \atop{s\geq p,t\geq q}}\mathcal{A}^{s,t}(X)$$
such that $\alpha=\delta_{l-1}(\beta)$.
Up to a zero term in $H_{BC}^{l}(\mathfrak{g}_{\mathbb{C}}^{\ast};\mathbb{C}(1,1))$,
we may suppose that $\beta\in \big[\iota(\mathscr{L}^{l-1}_{\mathfrak{g}_{\mathbb{C}}^{\ast}}(1,1))\big]^{\perp}$,
where the orthogonality is meant with respect to the inner product on $\mathscr{L}^{l}_{X}(1,1)$ induced by the $G$-left invariant metric $g$ on $X$.
Since $\alpha=\delta_{l-1}(\beta)$ is a $G$-left invariant form and so is $\delta_{l}^{\ast}\delta_{l}(\beta)$.
As a result, we have
$$
||\delta_{l-1}(\beta)||^{2}=\langle \delta_{l-1}^{\ast}\delta_{l-1}(\beta), \beta \rangle=0,
$$
which means $\alpha=\delta_{l-1}(\beta)=0$ and therefore the injectivity is proved.

It remains to show that \eqref{nil-Lie-BC-hypercoh-mor} is surjective.
Let $\alpha$ be a $\delta_{l}$-closed differential form which represents a nonzero class in $H_{BC}^{l}(X,\mathbb{C}(1,1))$.
Then $\alpha$ also represents a de Rham class in $H_{dR}^{l+1}(X; \mathbb{C})$.
Note that $\alpha$ has a unique expression
$$
\alpha=\alpha^{l,1}+\alpha^{l-1,2}+\cdots+\alpha^{2,l-1}+\alpha^{1,l}.
$$
Because of the isomorphism
$$
\iota: H_{dR}^{l+1}(\mathfrak{g}_{\mathbb{C}}^{\ast}; \mathbb{C})
\stackrel{\simeq}\longrightarrow
H_{dR}^{l+1}(X; \mathbb{C}),
$$
there is a class $[\beta]\in H_{dR}^{l+1}(\mathfrak{g}_{\mathbb{C}}^{\ast}; \mathbb{C})$ such that
\begin{equation}\label{nil-d-exact}
\alpha=\beta+d\gamma
\end{equation}
for some differntial $l$-form $\gamma$.
By comparing the types of the forms in \eqref{nil-d-exact}, we obtain
$$
\beta^{l+1,0}=-\partial \gamma^{l,0}
\;\textrm{ and }\;
\beta^{0,l+1}=-\bar{\partial}\gamma^{0,l}.
$$
Note that for a $G$-left-invariant $\bar{\partial}$-closed form $\phi$ (resp. $\partial$-closed form $\phi$),
every solution of the equation $\bar{\partial}\psi=\phi$ (resp. $\partial\psi=\phi$) is $G$-left-invariant up to a $\bar{\partial}$-exact  (resp. $\partial$-exact) term, see Step 2 in the proof of \cite[Theorem 3.7]{Ang13}.
Applying this property to our case,
we get two $G$-left-invariant forms $\gamma_{1}^{l,0}$ and $\gamma_{1}^{0,l}$ such that
$$
\gamma^{l,0}=\gamma_{1}^{l,0}+\partial u^{l-1,0}
\;\textrm{ and }\;
\gamma^{0,l}=\gamma_{1}^{0,l}+\bar{\partial} v^{0,l-1}
$$
for some $(l-1,0)$-form $u^{k-1,0}$ and $(0,l-1)$-form $v^{l-1,0}$.
Define a $G$-left-invariant form
$$
\tilde{\beta}
:=\bar{\partial}\gamma_{1}^{l,0}+\beta^{l,1}+\beta^{l-1,2}+\cdots+\beta^{2,l-1}+
\beta^{1,l}+\partial\gamma_{1}^{0,l}.
$$
From \eqref{nil-d-exact}, we have
\begin{eqnarray*}
\alpha
&=&
\tilde{\beta}+d(\gamma^{l-1,1}+\cdots+\gamma^{1,l-1})+\bar{\partial}\partial u^{l-1,0}+\partial \bar{\partial}v^{0,l-1} \\
&=& \tilde{\beta}+d(\gamma^{l-1,1}+\cdots+\gamma^{1,l-1}-\bar{\partial}u^{l-1,0}-\partial v^{0,l-1}) \\
&=& \tilde{\beta}+d\big[(\gamma^{l-1,1}-\bar{\partial}u^{l-1,0})+\gamma^{l-2,2}+
\cdots+\gamma^{2,l-2}+(\gamma^{1,l-1}-\partial v^{0,l-1})\big].
\end{eqnarray*}
It follows $d\tilde{\beta}=0$ and hence $[\alpha]=[\tilde{\beta}]$ in $H_{BC}^{l}(X, \mathbb{C}(1,1))$, and this completes the proof.
\end{proof}

We are ready to compute the bimeromorphic invariants for some complex nilmanifolds.
\begin{ex}[Iwasawa manifolds]\label{Iwasawa}
Consider the Heisenberg Lie group
$$
\mathrm{H}(3; \mathbb{C})
:=
\Bigg\{ {\tiny{\left(\begin{array}{ccc}
1 & z_{1} & z_{3}\\
0 & 1  & z_{2} \\
0 & 0 & 1
\end{array}
\right)} }\mid z_{1}, z_{2}, z_{3} \in \mathbb{C} \Bigg\}
\subset \mathrm{GL}(3; \mathbb{C}),
$$
and its discrete subgroup
$$
\mathrm{H}(3; \mathbb{Z}[\sqrt{-1}])
:=
\mathrm{GL}(3; \mathbb{Z}[\sqrt{-1}])\cap \mathrm{H}(3; \mathbb{C})
\subset \mathrm{H}(3; \mathbb{C})
$$
where $\mathbb{Z}[\sqrt{-1}]=\{a+b\sqrt{-1}\mid a,b\in \mathbb{Z}\}$ is the Gaussian integers.
Let $\mathfrak{g}$ be the Lie algebra of $\mathrm{H}(3; \mathbb{Z}[\sqrt{-1}])$.
By definition, $\mathrm{H}(3; \mathbb{C})$ is isomorphic to $\mathbb{C}^{3}$ as complex manifolds, and $\mathrm{H}(3; \mathbb{Z}[\sqrt{-1}])$ acts on $\mathrm{H}(3; \mathbb{C})$ via the left multiplication.
Such a $\mathrm{H}(3; \mathbb{Z}[\sqrt{-1}])$-action is free and properly discontinuous.
As a result, the quotient space
$$
\mathbb{I}_{3}:=\mathrm{H}(3; \mathbb{C})/ \mathrm{H}(3; \mathbb{Z}[\sqrt{-1}])
$$
is a compact 6-dimensional smooth manifold.
Moreover, there exists a canonical $\mathbb{H}(3;\mathbb{C})$-invariant complex structure $J_{0}$ on $\mathbb{I}_{3}$, such that $(\mathbb{I}_{3},J_{0})$ becomes a \emph{non-K\"{a}hler, no-formal, and holomorphically parallelizable} complex threefold, called the {\it Iwasawa manifold}.
In particular, all the de Rham, Dolbeault, Bott--Chern, and Aeppli cohomologies of $\mathbb{I}_{3}$ and its deformed objects can be computed via their Lie-algebra Dolbeault complexes.
Note that the space of $\mathrm{H}(3; \mathbb{C})$-invariant $(1,0)$-forms on $\mathrm{H}(3; \mathbb{C})$ has a basis $\{\omega^{1},\omega^{2},\omega^{3}\}$ satisfying the structure equations
$$
\begin{cases}
d\omega^{1}=0, \\
d\omega^{2}=0,\\
d\omega^{3}=-\omega^{1}\wedge\omega^{2},
\end{cases}
$$
see \cite[Section 4]{Ang13}.

On the one hand, via a straightforward computation we get the following table recording the complex dimensions of $\mathbb{H}^{l}(X,\mathbb{C}_{X}(1))$.
$$
\left.\begin{array}{|c|c|c|c|c|c|c|c|}
\hline \mathbb{H}^{\bullet}(X,\mathbb{C}(1)) &\mathbb{H}^{1} & \mathbb{H}^{2}& \mathbb{H}^{3} & \mathbb{H}^{4} & \mathbb{H}^{5} & \mathbb{H}^{6}\\
\hline \mathbb{I}_{3}  &  2 & 6 & 9 & 8 & 4 & 1 \\
\hline
\end{array}\right.
$$
This implies $\chi(\mathbb{C}_{X}(1))=0$.
On the other hand, by the computation of de Rham, Dolbeault, and Bott--Chern cohomologies of Iwasawa manifold (cf. \cite[Appendix]{Ang13}), we obtain $\chi(\mathcal{O}_{X})=0$ and therefore $\chi(\mathscr{B}_{X}(1,1))=0$ by \eqref{BCcoHnumber-chi-1}.
Due to Lemma \ref{nil-Lie-BC-hypercoh}, the Bott--Chern hypercohomology of $\mathbb{I}_{3}$ can be computed by its left-invariant forms.
Using the equality \eqref{chi-1}, we only need to compute $H_{BC}^{3}(\mathbb{I}_{3},\mathbb{C}(1,1))$, which is isomorphic to second hypercohomology group of $\mathscr{L}^{\bullet}_{\mathbb{I}_{3}}(1,1)$.
By definition, we obtain the associated bimeromorphic invariants of $\mathbb{I}_{3}$.
$$
\left.\begin{array}{|c|c|c|c|c|c|c|c|}
\hline  &l=1 &l=2&l=3&l=4& l=5 & l=6\\
\hline \dim\, H^{l}_{dR}(\mathbb{I}_{3},\mathbb{C}) & 4 & 8 & 10 & 8 & 4 & 1 \\
\hline \dim\, \mathbb{H}^{l}(\mathbb{I}_{3},\mathbb{C}(1))& 2 & 6 & 9 & 8 & 4 & 1 \\
\hline \dim\, H^{l}_{BC}(\mathbb{I}_{3},\mathbb{C}(1,1)) & 1 & 4 & 8 & 8 & 4 & 1 \\
\hline \spadesuit^{l} & 1 & 2 & 1 & 0 & 0 & 0 \\
\hline \clubsuit^{l} & 3 & 4 & 2 & 0 & 0 & 0 \\
\hline
\end{array}\right.
$$
Set $\omega^{r\bar{s}}=\omega^{r}\wedge\omega^{\bar{s}}$ for any $r,s\geq1$.
More precisely, we have
\begin{eqnarray*}
H^{1,1}_{\bar{\partial}}(\mathbb{I}_{3})
&=&
\langle
[\omega^{1\bar{1}}],
[\omega^{1\bar{2}}],
[\omega^{2\bar{1}}],
[\omega^{2\bar{2}}],
[\omega^{3\bar{1}}],
[\omega^{3\bar{2}}]
\rangle, \\
H^{1,2}_{\bar{\partial}}(\mathbb{I}_{3})
&=&
\langle
[\omega^{1\bar{1}\bar{3}}],
[\omega^{1\bar{2}\bar{3}}],
[\omega^{2\bar{1}\bar{3}}],
[\omega^{2\bar{2}\bar{3}}],
[\omega^{3\bar{1}\bar{3}}],
[\omega^{3\bar{2}\bar{3}}]
\rangle, \\
H^{2,2}_{\bar{\partial}}(\mathbb{I}_{3})
&=&
\langle
[\omega^{12\bar{1}\bar{3}}],
[\omega^{12\bar{2}\bar{3}}],
[\omega^{13\bar{1}\bar{3}}],
[\omega^{13\bar{2}\bar{3}}],
[\omega^{23\bar{1}\bar{3}}],
[\omega^{23\bar{2}\bar{3}}]
\rangle, \\
H^{1,1}_{BC}(\mathbb{I}_{3})
&=&
\langle
[\omega^{1\bar{1}}],
[\omega^{1\bar{2}}],
[\omega^{2\bar{1}}],
[\omega^{2\bar{2}}]
\rangle, \\
H^{1,2}_{BC}(\mathbb{I}_{3})
&=&
\langle
[\omega^{1\bar{1}\bar{2}}],
[\omega^{1\bar{1}\bar{3}}],
[\omega^{1\bar{2}\bar{3}}],
[\omega^{2\bar{1}\bar{2}}],
[\omega^{2\bar{1}\bar{3}}]
[\omega^{2\bar{2}\bar{3}}]
\rangle, \\
H^{2,2}_{BC}(\mathbb{I}_{3})
&=&
\langle
[\omega^{12\bar{1}\bar{3}}],
[\omega^{12\bar{2}\bar{3}}],
[\omega^{13\bar{1}\bar{2}}],
[\omega^{13\bar{1}\bar{3}}],
[\omega^{13\bar{2}\bar{3}}],
[\omega^{23\bar{1}\bar{2}}],
[\omega^{23\bar{1}\bar{3}}],
[\omega^{23\bar{2}\bar{3}}]
\rangle.
\end{eqnarray*}
Consequently, we obtain the kernel and cokernel of $\mathfrak{I}^{1,1}$, $\mathfrak{I}^{2,1}$ and $\mathfrak{I}^{2,2}$ as follows:
\begin{eqnarray*}
\ker(\mathfrak{I}^{1,1})
&=& 0,\quad
\ker(\mathfrak{I}^{1,2})
=
\langle
[\omega^{1\bar{1}\bar{2}}],
[\omega^{2\bar{1}\bar{2}}]
\rangle, \quad
\ker(\mathfrak{I}^{2,2})
=\langle
[\omega^{13\bar{1}\bar{2}}],
[\omega^{23\bar{1}\bar{2}}]
 \rangle; \\
\coker(\mathfrak{I}^{1,1})
&=&
\langle
[\omega^{3\bar{1}}],
[\omega^{3\bar{2}}]
\rangle,\quad
\coker(\mathfrak{I}^{1,2})
=
\langle
[\omega^{3\bar{1}\bar{3}}],
[\omega^{3\bar{2}\bar{3}}]
\rangle,\quad
\coker(\mathfrak{I}^{2,2})
=0.
\end{eqnarray*}
It is noteworthy that the numerical bimeromorphic invariant
$$
\Delta_{BC,\bar{\partial}}^{1,2}(\mathbb{I}_{3})
=h_{BC}^{1,2}(\mathbb{I}_{3})-h_{\bar{\partial}}^{1,2}(\mathbb{I}_{3})
$$
in Example \ref{Iwasawa} is zero.
However, the kernel and cokernel of $\mathfrak{I}^{1,2}$ are non-trivial.
\end{ex}


\begin{ex}[Nilmanifold with Lie algebra $\mathfrak{h}_{6}$]
Let $M=\Gamma\setminus G$ be a complex 3-dimensional nilmanifold endowed with an invariant complex structure $J$ such that the underlying Lie algebra is isomorphic to $\mathfrak{h}_{6}$ in the classification of \cite[Theorem 2.1]{COUV16}.
A result of Ceballos--Otal--Ugarte--Villacampa \cite[Proposition 4.3]{COUV16} shows that the Fr\"{o}licher spectral sequence of $M$ degenerates at $E_{1}$-page and the Hodge symmetry holds;
however, it does not satisfy the $\partial\bar{\partial}$-lemma.
In particular, the Hodge and Bott--Chern diamonds of $M$ are:
\begin{equation*}
\begin{array}{cccccc}
\begin{matrix}
&&&&&    1  \\
&&&&  2  &&  2 \\
&&& 2 && 5 && 2\\
&&1 && 5 &&  5 && 1\\
&&& 2 && 5 && 2\\
&&&&  2  &&  2 \\
&&&&&    1
\end{matrix} &
\begin{matrix}
&&&&&    1  \\
&&&&  2  &&  2 \\
&&& 2 && 5 && 2\\
&&1 && 6 &&  6 && 1\\
&&& 2 && 6 && 2\\
&&&&  3  &&  3 \\
&&&&&    1
\end{matrix} &\\
\;\;\;\; (\textrm{Hodge}) & \;\;\;\;(\textrm{Bott--Chern})  \\[1ex]
\end{array}
\end{equation*}
Since $E_{1}\cong E_{\infty}$, from \eqref{c-dol-0}-\eqref{c-dol-1}, the hypercohomology groups of $\mathbb{C}_{M}(1)$ can be read off from the Hodge diamond via the isomorphism
\begin{equation}\label{hper-c-1}
\mathbb{H}^{l}(M,\mathbb{C}_{M}(1))\cong\mathbb{H}^{l-1}(M,\Omega^{\bullet\geq1}_{M})
\cong
\bigoplus_{s\geq 1\atop{s+t=l-1}}H^{s,t}_{\bar{\partial}}(M).
\end{equation}

Recall that $(\mathfrak{h}_{6}\otimes\mathbb{C})^{1,0}$ has a basis $\{\omega^{1},\omega^{2},\omega^{3}\}$ satisfying
$$
\begin{cases}
 d\omega^{1}=0, \\
d\omega^{2}=0,\\
d\omega^{3}=\omega^{12}+\omega^{1\bar{1}}+\omega^{1\bar{2}}.
\end{cases}
$$
Then we get a table recording the dimensions of $\mathbb{H}^{l}(M, \mathbb{C}_{M}(1))$ and the Bott--Chern hypercohomology groups.
$$
\left.\begin{array}{|c|c|c|c|c|c|c|c|}
\hline  &l=1 &l=2&l=3&l=4& l=5 & l=6\\
\hline \dim\, H^{l}_{dR}(M) &  4 & 9  & 11  & 9  & 4  & 1  \\
\hline \dim\,\mathbb{H}^{l}(M,\mathbb{C}_{M}(1))& 2 & 7 & 11 & 9 & 4 & 1 \\
\hline \dim\, H^{l}_{BC}(M,\mathbb{C}(1,1)) & 1 & 5 & 10 & 9 & 4 & 1 \\
\hline \spadesuit^{l} & 1 & 2 & 1 & 0 & 0 & 0 \\
\hline \clubsuit^{l} & 3 & 4 & 1 & 0 & 0 & 0 \\
\hline
\end{array}\right.
$$
More precisely, the generators of the cohomology groups are:
\begin{eqnarray*}
\mathbb{H}^{1}(M, \mathbb{C}_{M}(1))&=&\langle[\omega^{1}],[\omega^{2}]\rangle\\
\mathbb{H}^{2}(M, \mathbb{C}_{M}(1))
&=&
\langle
[\omega^{13}],
[\omega^{1\bar{1}}], [\omega^{1\bar{2}}], [\omega^{2\bar{1}}], [\omega^{2\bar{2}}],
[\omega^{1\bar{3}}+\omega^{3\bar{1}}], [\omega^{23}-\omega^{3\bar{1}}-\omega^{3\bar{2}}]
\rangle\\
\mathbb{H}^{3}(M, \mathbb{C}_{M}(1))
&=& \langle  [\omega^{123}], [\omega^{12\bar{1}}],  [\omega^{13\bar{1}}], [\omega^{13\bar{2}}],  [\omega^{1\bar{1}\bar{3}}],  [\omega^{2\bar{1}\bar{3}}],
[\omega^{12\bar{3}}+\omega^{23\bar{1}}], [\omega^{12\bar{3}}-\omega^{23\bar{2}}],   \\
&& [\omega^{12\bar{3}}-\omega^{2\bar{2}\bar{3}}], [\omega^{12\bar{3}}-\omega^{3\bar{1}\bar{2}}],
[\omega^{13\bar{3}}+\omega^{23\bar{3}}-\omega^{31\bar{3}}-\omega^{3\bar{2}\bar{3}}]
\rangle\\
\mathbb{H}^{4}(M, \mathbb{C}_{M}(1))
&=& \langle  [\omega^{123\bar{1}}],  [\omega^{123\bar{2}}], [\omega^{123\bar{3}}],  [\omega^{2\bar{1}\bar{2}\bar{3}}],  [\omega^{12\bar{1}\bar{3}}],
[\omega^{13\bar{1}\bar{3}}], [\omega^{23\bar{1}\bar{2}}],
 [\omega^{3\bar{1}\bar{2}\bar{3}}+\omega^{23\bar{1}\bar{3}}],  \\
&& [\omega^{13\bar{2}\bar{3}}+\omega^{23\bar{1}\bar{3}}+\omega^{23\bar{2}\bar{3}}]
\rangle \\
H_{BC}^{3}(M, \mathbb{C}_{M}(1,1))
&=& \langle  [\omega^{12\bar{1}}],  [\omega^{13\bar{1}}], [\omega^{13\bar{2}}],  [\omega^{1\bar{1}\bar{3}}],  [\omega^{2\bar{1}\bar{3}}],
[\omega^{12\bar{3}}+\omega^{23\bar{1}}], [\omega^{12\bar{3}}-\omega^{23\bar{2}}],   \\
&& [\omega^{12\bar{3}}-\omega^{2\bar{2}\bar{3}}], [\omega^{12\bar{3}}-\omega^{3\bar{1}\bar{2}}],
[\omega^{13\bar{3}}+\omega^{23\bar{3}}-\omega^{31\bar{3}}-\omega^{3\bar{2}\bar{3}}]
\rangle \\
H_{BC}^{3}(M, \mathbb{C}_{M}(1,1))
&=&\mathbb{H}^{4}(M, \mathbb{C}_{M}(1)) .
\end{eqnarray*}
Furthermore, the Dolbeault and Bott--Chern cohomologies are:
\begin{eqnarray*}
H^{1,1}_{\bar{\partial}}(M)
&=&
\langle
[\omega^{1\bar{2}}],
[\omega^{2\bar{1}}],
[\omega^{2\bar{2}}],
[\omega^{1\bar{3}}-\omega^{3\bar{2}}],
[\omega^{3\bar{1}}+\omega^{3\bar{2}}]
\rangle \\
H^{2,1}_{\bar{\partial}}(M)
&=&
\langle
[\omega^{12\bar{1}}],
[\omega^{1\bar{3}\bar{1}}],
[\omega^{12\bar{3}}+\omega^{23\bar{1}}],
[\omega^{12\bar{3}}-\omega^{23\bar{2}}],
[\omega^{13\bar{2}}]
\rangle \\
H^{2,2}_{\bar{\partial}}(M)
&=&
\langle
[\omega^{12\bar{1}\bar{3}}],
[\omega^{12\bar{2}\bar{3}}],
[\omega^{13\bar{1}\bar{3}}],
[\omega^{13\bar{2}\bar{3}}],
[\omega^{23\bar{1}\bar{3}}+\omega^{23\bar{2}\bar{3}}]
\rangle\\
H^{1,1}_{BC}(M)
&=&
\langle
[\omega^{1\bar{1}}],
[\omega^{1\bar{2}}],
[\omega^{2\bar{1}}],
[\omega^{2\bar{2}}],
[\omega^{1\bar{3}}+\omega^{3\bar{1}}]
\rangle \\
H^{2,1}_{BC}(M)
&=&
\langle
[\omega^{12\bar{1}}],
[\omega^{12\bar{2}}],
[\omega^{13\bar{1}}],
[\omega^{12\bar{3}}+\omega^{23\bar{1}}],
[\omega^{12\bar{3}}-\omega^{23\bar{2}}],
[\omega^{13\bar{2}}]
\rangle \\
H^{2,2}_{BC}(M)
&=&
\langle
[\omega^{12\bar{1}\bar{3}}],
[\omega^{12\bar{2}\bar{3}}],
[\omega^{13\bar{1}\bar{2}}],
[\omega^{13\bar{1}\bar{3}}],
[\omega^{23\bar{1}\bar{2}}],
[\omega^{13\bar{2}\bar{3}}+\omega^{23\bar{1}\bar{3}}+\omega^{23\bar{2}\bar{3}}]
\rangle.
\end{eqnarray*}
Combining with \cite[Proposition 4.3]{COUV16} derives:
\begin{eqnarray*}
\ker(\mathfrak{I}^{1,1})
&=&
\langle  [\omega^{1\bar{1}}+\omega^{1\bar{2}}]  \rangle,\quad
\ker(\mathfrak{I}^{2,1})
=\langle  [\omega^{12\bar{1}}+\omega^{12\bar{2}} \rangle \\
\ker(\mathfrak{I}^{2,2})
&=&\langle
[\omega^{13\bar{1}\bar{2}}] ,
[\omega^{12\bar{1}\bar{3}}+\omega^{12\bar{2}\bar{3}}+\omega^{23\bar{1}\bar{2}}]
\rangle \\
\coker(\mathfrak{I}^{1,1})
&=&
\langle
[\omega^{1\bar{3}}+\omega^{3\bar{2}}]
\rangle
=\langle
[\omega^{3\bar{1}}+\omega^{3\bar{2}}]
\rangle,\quad
\coker(\mathfrak{I}^{2,1})
= 0\\
\coker(\mathfrak{I}^{2,2})
&=&
\langle
[-\omega^{13\bar{2}\bar{3}}]
\rangle
=
\langle
[\omega^{23\bar{1}\bar{3}}+\omega^{23\bar{2}\bar{3}}]
\rangle.
\end{eqnarray*}
\end{ex}

\begin{ex}[Nilmanifold with Lie algebra $\mathfrak{h}_{7}$]
Let $\mathfrak{h}_{7}$ be the 6-dimensional Lie algebra in the classification of \cite[Theorem 2.1]{COUV16}.
Assume that $\mathfrak{h}_{7}$ is the Lie algebra of the universal cover $G$ of the complex 3-dimensional nilmanifold $M=\Gamma\setminus G$.
According to the work \cite{FRR19}, the Lemma \ref{nil-Lie-BC-hypercoh} holds for $M$.
Note that the space  $G$-invariant  $(1,0)$-forms admits a basis $\{\omega^{1},\omega^{2},\omega^{3}\}$ satisfying the structure equation
$$
\begin{cases}
d\omega^{1}=0, \\
d\omega^{2}=\omega^{1\bar{1}},\\
d\omega^{3}=\omega^{12}+\omega^{1\bar{2}}.
\end{cases}
$$
Following the steps in Example \ref{Iwasawa}, we obtain the following:
\begin{eqnarray*}
\mathbb{H}^{1}(M, \mathbb{C}_{M}(1))
&=&\langle[\omega^{1}]\rangle\\
\mathbb{H}^{2}(M, \mathbb{C}_{M}(1))
&=&
\langle
[\omega^{13}], [\omega^{1\bar{2}}], [\omega^{2\bar{1}}],
[\omega^{3\bar{1}}+\omega^{2\bar{2}}],
[\omega^{1\bar{3}}+\omega^{2\bar{2}}],
[\omega^{23}-\omega^{3\bar{2}}]
\rangle\\
\mathbb{H}^{3}(M, \mathbb{C}_{M}(1))
&=&
\langle
[\omega^{123}], [\omega^{12\bar{1}}], [\omega^{12\bar{3}}],
[\omega^{13\bar{1}}], [\omega^{13\bar{2}}],
[\omega^{2\bar{1}\bar{2}}], [\omega^{2\bar{1}\bar{3}}],
[\omega^{13\bar{3}}+\omega^{23\bar{2}}],\\
&&
[\omega^{23\bar{1}}+\omega^{1\bar{2}\bar{3}}],
[\omega^{23\bar{3}}+\omega^{3\bar{2}\bar{3}}],
[\omega^{3\bar{1}\bar{2}}+\omega^{1\bar{2}\bar{3}}],
[\omega^{3\bar{1}\bar{3}}+\omega^{13\bar{3}}]
\rangle\\
\mathbb{H}^{4}(M, \mathbb{C}_{M}(1))
&=&
\langle
[\omega^{2\bar{1}\bar{2}\bar{3}}],
[\omega^{12\bar{3}\bar{1}}],
[\omega^{123\bar{2}}],
[\omega^{12\bar{2}\bar{3}}],
[\omega^{12\bar{1}\bar{3}}],
[\omega^{13\bar{1}\bar{3}}],\\
&&
[\omega^{3\bar{1}\bar{2}\bar{3}}+\omega^{23\bar{1}\bar{3}}],
[\omega^{13\bar{2}\bar{3}}+\omega^{123\bar{3}}]
\rangle \\
H_{BC}^{2}(M, \mathbb{C}(1,1))
&=&
\langle
[\omega^{1\bar{1}}], [\omega^{1\bar{2}}], [\omega^{2\bar{1}}],
[\omega^{3\bar{1}}+\omega^{2\bar{2}}],[\omega^{1\bar{3}}+\omega^{2\bar{2}}]
\rangle =H_{BC}^{1,1}(M)  \\
H_{BC}^{3}(M, \mathbb{C}(1,1))
&=&
\langle
[\omega^{12\bar{1}}], [\omega^{12\bar{3}}],
[\omega^{13\bar{1}}], [\omega^{13\bar{2}}],
[\omega^{2\bar{1}\bar{2}}], [\omega^{2\bar{1}\bar{3}}],
[\omega^{13\bar{3}}+\omega^{23\bar{2}}],\\
&&
[\omega^{23\bar{1}}+\omega^{1\bar{2}\bar{3}}],
[\omega^{23\bar{3}}+\omega^{3\bar{2}\bar{3}}],
[\omega^{3\bar{1}\bar{2}}+\omega^{1\bar{2}\bar{3}}],
[\omega^{3\bar{1}\bar{3}}+\omega^{13\bar{3}}]
\rangle  \\
H_{BC}^{4}(M, \mathbb{C}(1,1))&=&\mathbb{H}^{4}(M, \mathbb{C}_{X}(1)).
\end{eqnarray*}
As a result, we get the following table of invariants.
$$
\left.\begin{array}{|c|c|c|c|c|c|c|c|}
\hline  &l=1 &l=2&l=3&l=4& l=5 & l=6\\
\hline \dim\,H_{dR}^{l}(M) & 3  & 8  & 12  &  8  &  3 &  1 \\
\hline \dim\,\mathbb{H}^{l}(M,\mathbb{C}_{M}(1))& 1 & 6 & 12 & 8 & 3 & 1 \\
\hline \dim\, H^{l}_{BC}(M,\mathbb{C}(1,1)) & 1 & 5 & 11 & 8 & 3 & 1 \\
\hline \spadesuit^{l} & 0 & 1 & 1 & 0 & 0 & 0 \\
\hline \clubsuit^{l} & 2  &  3 & 1 & 0 &  0 & 0 \\
\hline
\end{array}\right.
$$
Using the same arguments, we can compute the generators of the Dolbeault and Bott--Chern cohomologies as follows (see also \cite{AFR15}).
\begin{eqnarray*}
H_{\bar{\partial}}^{1,1}(M)
&=&
\langle
[\omega^{2\bar{1}}], [\omega^{3\bar{2}}],
[\omega^{3\bar{1}}+\omega^{2\bar{2}}],[\omega^{1\bar{3}}+\omega^{2\bar{2}}]
\rangle, \\
H_{\bar{\partial}}^{2,1}(M)
&=&
\langle
[\omega^{12\bar{1}}], [\omega^{12\bar{2}}],
[\omega^{12\bar{3}}],
[\omega^{13\bar{2}}],[\omega^{13\bar{3}}+\omega^{23\bar{2}}]
\rangle, \\
H_{\bar{\partial}}^{1,2}(M)
&=&
\langle
[\omega^{2\bar{1}\bar{2}}], [\omega^{2\bar{1}\bar{3}}],
[\omega^{1\bar{2}\bar{3}}],
[\omega^{3\bar{2}\bar{3}}],[\omega^{3\bar{1}\bar{3}}+\omega^{2\bar{2}\bar{3}}]
\rangle, \\
H_{\bar{\partial}}^{2,2}(M)
&=&
\langle
[\omega^{12\bar{1}\bar{3}}], [\omega^{12\bar{2}\bar{3}}],
[\omega^{13\bar{2}\bar{3}}],[\omega^{23\bar{1}\bar{2}}]
\rangle,\\
H_{BC}^{1,1}(M)
&=&
\langle
[\omega^{1\bar{1}}], [\omega^{1\bar{2}}], [\omega^{2\bar{1}}],
[\omega^{3\bar{1}}+\omega^{2\bar{2}}],[\omega^{1\bar{3}}+\omega^{2\bar{2}}]
\rangle, \\
H_{BC}^{2,1}(M)
&=&
\langle
[\omega^{12\bar{1}}], [\omega^{12\bar{2}}],
[\omega^{12\bar{3}}], [\omega^{13\bar{1}}],
[\omega^{13\bar{2}}],[\omega^{13\bar{3}}+\omega^{23\bar{2}}]
\rangle, \\
H_{BC}^{1,2}(M)
&=&
\langle
[\omega^{1\bar{1}\bar{2}}],
[\omega^{2\bar{1}\bar{2}}],
[\omega^{2\bar{1}\bar{3}}],
[\omega^{1\bar{2}\bar{3}}],
[\omega^{1\bar{1}\bar{3}}],
[\omega^{3\bar{1}\bar{3}}+\omega^{2\bar{2}\bar{3}}]
\rangle, \\
H_{BC}^{2,2}(M)
&=&
\langle
[\omega^{12\bar{1}\bar{3}}], [\omega^{12\bar{2}\bar{3}}],
[\omega^{13\bar{1}\bar{3}}],
[\omega^{13\bar{2}\bar{3}}],[\omega^{23\bar{1}\bar{2}}]
\rangle.
\end{eqnarray*}
To be more specific, we can compute the kernels and cokernels of $\mathfrak{C}$ as follows.
\begin{eqnarray*}
\ker(\mathfrak{C}^{2})
&=&
\langle
[\omega^{1\bar{1}}]
\rangle,\quad
\ker(\mathfrak{C}^{3})
= 0; \\
\coker(\mathfrak{C}^{2})
& = &\langle
[\omega^{13}], [\omega^{23}-\omega^{3\bar{2}}]
\rangle,\quad
\coker(\mathfrak{C}^{3})=
\langle
[\omega^{123}]
\rangle.
\end{eqnarray*}
Similarly, we have
\begin{eqnarray*}
\ker(\mathfrak{I}^{1,1})
&=&
\langle
[\omega^{1\bar{1}}], [\omega^{1\bar{2}}]
\rangle,\quad
\ker(\mathfrak{I}^{1,2})
=
\langle
[\omega^{1\bar{1}\bar{2}}],
 [\omega^{1\bar{1}\bar{3}}-\omega^{2\bar{1}\bar{2}}],
\rangle \\
\ker(\mathfrak{I}^{2,1})
&=&
\langle
[\omega^{12\bar{2}}-\omega^{13\bar{1}}]
\rangle,\quad
\ker(\mathfrak{I}^{2,2})
=
\langle
[\omega^{13\bar{1}\bar{2}}],
[\omega^{13\bar{2}\bar{3}}+\omega^{23\bar{1}\bar{2}}-\omega^{13\bar{1}\bar{3}}]
\rangle; \\
\coker(\mathfrak{I}^{1,1})
&=&
\langle
[\omega^{3\bar{2}}]
\rangle,\quad
\coker(\mathfrak{I}^{1,2})
=
\langle
[\omega^{3\bar{2}\bar{3}}]
\rangle \\
\coker(\mathfrak{I}^{2,1})
&=&  0,\quad
\coker(\mathfrak{I}^{2,2}) = 0.
\end{eqnarray*}
\begin{equation*}
\begin{array}{cccccc}
\begin{matrix}
&&&&&    1  \\
&&&&  1  &&  2 \\
&&& 2 && 4 && 2\\
&&1 && 5 && 5 && 1\\
&&& 3 && 4 && 2\\
&&&&  2  &&  2 \\
&&&&&    1
\end{matrix} &
\begin{matrix}
&&&&&    1  \\
&&&&  1  &&  1 \\
&&& 3 && 5 && 3\\
&&1 && 6 &&  6 && 1\\
&&& 2 && 5 && 2\\
&&&&  3  &&  3 \\
&&&&&    1
\end{matrix} &\\
\;\;\;\; (\textrm{Hodge}) & \;\;\;\;(\textrm{Bott--Chern})  \\[1ex]
\end{array}
\end{equation*}
\end{ex}
\subsection{K\"{a}hler threefolds}
In this subsection, we consider two classical examples of K\"{a}hler threefolds:
smooth quintic threefolds and smooth cubic threefolds.
\begin{ex}[Quintic threefolds]
Recall that a smooth quintic threefold $X$ is a smooth hypersurface of degree $5$ in $\mathbb{CP}^{4}$,
which is a Calabi--Yau manifold since
$$\omega_{X}\cong \mathcal{O}_{X}(5-4-1)=\mathcal{O}_{X}.$$
Because $X$ is K\"{a}hler and its Hodge numbers are known (see the diagram below), the the hypercohomology groups of $\mathbb{C}_{X}(1)$ can be computed via the isomorphism \eqref{hper-c-1}.
Using the short exact sequence \eqref{example-ex-seq} and the argument above we can compute the Bott--Chern hypercohomology of $X$.
Finally, we get $\spadesuit^{k}(X)=0$ for $k\in \{2,4,5,6\}$ and $\spadesuit^{1}(X)=-1$ and $\spadesuit^{3}(X)=1$;
$\clubsuit^{k}(X)=0$ for $k\in \{2,4,5,6\}$ and $\clubsuit^{1}(X)=-1$ and $\clubsuit^{3}(X)=2$.
\end{ex}

\begin{ex}[Cubic threefolds]
Let $X$ be a smooth cubic threefold, i.e., a smooth hypersurface of degree $3$ in $\mathbb{CP}^{4}$.
This is a famous non-rational Fano threefold.
Using the same arguments, we obtain $\spadesuit^{k}(X)=\clubsuit^{k}(X)=0$ except for $\spadesuit^{1}(X)=\clubsuit^{1}(X)=-1$.
\end{ex}
\begin{equation*}
\begin{array}{cccccc}
\begin{matrix}
&&&&&    1  \\
&&&&  0  &&  0 \\
&&& 0 && 1 && 0\\
&&1 && 101 &&  101 && 1\\
&&& 0 && 1 && 0\\
&&&&  0  &&  0 \\
&&&&&    1
\end{matrix} &
\begin{matrix}
&&&&&    1  \\
&&&&  0  &&  0 \\
&&& 0 && 1 && 0\\
&&0 && 5 &&  5 && 0\\
&&& 0 && 1 && 0\\
&&&&  0  &&  0 \\
&&&&&    1
\end{matrix} &\\
\;\;\;\; (\textrm{ Hodge diamond of quintic threefold}) & \;\;\;\;(\textrm{Hodge diamond of cubic threefold})  \\[1ex]
\end{array}
\end{equation*}


\appendix

\section{Relative sheaves}\label{appA}

The purpose of this appendix is to present a brief review of basic properties of relative sheaves of a compact complex manifold $X$ with respect to a closed complex submanifold.
Assume that $\imath:Z\hookrightarrow X$ is a closed complex submanifold.
Let $U:=X\setminus Z$ the complement open subset and $\jmath: U\hookrightarrow X$ the inclusion map.
Set $\mathbb{G}=\mathbb{R}$ or $\mathbb{C}$.
Then there is a short exact sequence of sheaves
\begin{equation}\label{rela-const-sheaf-exact}
\xymatrix@C=0.5cm{
0 \ar[r] & \jmath_{!}\jmath^{-1} \mathbb{G}_{X} \ar[r]^{} & \mathbb{G}_{X} \ar[r]^{} & \imath_{\ast} \imath^{-1}\mathbb{G}_{X} \ar[r] & 0.}
\end{equation}
Note that $\imath^{-1}\mathbb{G}_{X}=\mathbb{G}_{Z}$.
We call the sheaf $\mathbb{G}_{X,Z}:=\jmath_{!}\jmath^{-1} \mathbb{G}_{X}$ the {\it relative constant sheaf} with respect to the pair $(X,Z)$.
The sheaf cohomology of relative constant sheaves, compactly supported de Rham cohomologies, and relative de Rham cohomologies are isomorphic, i.e. we have
$$
H^{k}(X, \mathbb{R}_{X,Z})\cong H_{dR,c}^{k}(U; \mathbb{R})\cong H_{dR}^{k}(X,Z;\mathbb{R})
$$
and
$$
H^{k}(X, \mathbb{C}_{X,Z})\cong H_{dR,c}^{k}(U; \mathbb{C})\cong H_{dR}^{k}(X,Z;\mathbb{C}).
$$

Since $\iota: Z\hookrightarrow X$ is a closed complex submanifold of $X$,
there exist two natural surjective morphisms of sheaves on $X$
$$
\iota^{\star}: \Omega_{X}^{s}\longrightarrow \iota_{\ast}\Omega^{s}_{Z},\quad \forall\, s\in \mathbb{N},
$$
and
$$
\iota^{\star}: \mathcal{A}_{X}^{s,t}\longrightarrow \iota_{\ast}\mathcal{A}_{Z}^{s,t}, \quad \forall\, s, t\in \mathbb{N},
$$
which are induced by the pullback of differential forms.

\begin{defn}[\cite{RYY20,YY20}]\label{defn-relDol}
For any $s\in \mathbb{N}$, the kernel sheaf
$$
\mathcal{K}_{X,Z}^{s}
:=
\ker\big(\Omega_{X}^{s} \stackrel{\imath^{\star}} \longrightarrow \imath_{\ast}\Omega_{Z}^{s}\big)
$$
is called the {\it $s$-th relative Dolbeault sheaf} with respect to the pair $(X,Z)$.
For any $s, t\in \mathbb{N}$, the kernel sheaf
$$
\mathcal{K}_{X,Z}^{s,t}
:=
\ker\big(\mathcal{A}_{X}^{s,t} \stackrel{\imath^{\star}} \longrightarrow  \imath_{\ast}\mathcal{A}_{Z}^{s,t}\big)
$$
is called the {\it $(s, t)$-th relative Dolbeault sheaf} with respect to the pair $(X,Z)$.
\end{defn}

The sheaf $\mathcal{K}_{X,Z}^{s}$ is a $\mathcal{O}_{X}$-module and $\mathcal{K}_{X,Z}^{s,t}$ is a $\mathcal{C}^{\infty}_{X}$-module.
From definition, there exist two short exact sequences of sheaves on $X$:
\begin{equation}\label{exact-purerelDolsh}
\xymatrix@C=0.5cm{
  0 \ar[r] & \mathcal{K}_{X,Z}^{s} \ar[r]^{} & \Omega_{X}^{s} \ar[r]^{\imath^{\ast}} &\imath_{\ast}\Omega_{Z}^{s} \ar[r] & 0,}
\end{equation}
and
\begin{equation}\label{exact-bi-degreerelDolsh}
\xymatrix@C=0.5cm{
  0 \ar[r] & \mathcal{K}_{X,Z}^{s,t} \ar[r]^{} & \mathcal{A}_{X}^{s,t} \ar[r]^{\imath^{\ast}} & \imath_{\ast}\mathcal{A}_{Z}^{s,t} \ar[r] & 0.}
\end{equation}
Moreover, it is important to notice that the sheaf complex $\mathcal{K}_{X,Z}^{s,\bullet}$ is a fine resolution of $ \mathcal{K}_{X,Z}^{s}$ and the relative holomorphic de Rham complex $\mathcal{K}_{X,Z}^{\bullet}$ is a resolution of $\mathbb{C}_{X,Z}$
and thus we get the following relative Dolbeault theorem immediately.

\begin{lem}
There exist isomorphisms
$$
H^{k}(X, \mathcal{K}_{X,Z}^{s})
\cong
\mathbb{H}^{k}(X, \mathcal{K}_{X,Z}^{s,\bullet})
$$
and
$$
H^{k}(X, \mathbb{C}_{X,Z})
\cong
\mathbb{H}^{k}(X, \mathcal{K}_{X,Z}^{\bullet})
$$
for each integer $k\in \mathbb{Z}$.
\end{lem}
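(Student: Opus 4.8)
The plan is to deduce both isomorphisms from the abstract de Rham principle that a resolution of a sheaf computes its (hyper)cohomology. Thus it suffices to check two facts: that $(\mathcal{K}_{X,Z}^{s,\bullet},\bar{\partial})$ is a fine resolution of the relative Dolbeault sheaf $\mathcal{K}_{X,Z}^{s}$, and that $(\mathcal{K}_{X,Z}^{\bullet},\partial)$ is a resolution of the relative constant sheaf $\mathbb{C}_{X,Z}$. Granting this, a resolution is in particular a quasi-isomorphism in the derived category of sheaves of $\mathbb{C}$-modules on $X$, and hypercohomology is invariant under quasi-isomorphism, so $\mathbb{H}^{k}(X,\mathcal{K}_{X,Z}^{s,\bullet})\cong\mathbb{H}^{k}(X,\mathcal{K}_{X,Z}^{s})=H^{k}(X,\mathcal{K}_{X,Z}^{s})$ and $\mathbb{H}^{k}(X,\mathcal{K}_{X,Z}^{\bullet})\cong\mathbb{H}^{k}(X,\mathbb{C}_{X,Z})=H^{k}(X,\mathbb{C}_{X,Z})$ for all $k\in\mathbb{Z}$. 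Fineness of the terms $\mathcal{K}_{X,Z}^{s,t}$ is not logically required for the stated isomorphism, but it additionally lets one compute $\mathbb{H}^{k}(X,\mathcal{K}_{X,Z}^{s,\bullet})$ as the cohomology of the complex of global sections $\Gamma(X,\mathcal{K}_{X,Z}^{s,\bullet})$, i.e.\ as an honest Dolbeault-type cohomology.

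First I would prove the resolution statement for $\mathcal{K}_{X,Z}^{s,\bullet}$. Since $\imath^{\star}$ commutes with $\bar{\partial}$, the degreewise short exact sequences \eqref{exact-bi-degreerelDolsh} assemble into a short exact sequence of complexes of sheaves $0\to\mathcal{K}_{X,Z}^{s,\bullet}\to\mathcal{A}_{X}^{s,\bullet}\xrightarrow{\imath^{\star}}\imath_{\ast}\mathcal{A}_{Z}^{s,\bullet}\to0$, hence a long exact sequence of cohomology sheaves $\mathcal{H}^{t}(-)$. As $\imath$ is a closed embedding, $\imath_{\ast}$ is exact, so $\mathcal{H}^{t}(\imath_{\ast}\mathcal{A}_{Z}^{s,\bullet})\cong\imath_{\ast}\mathcal{H}^{t}(\mathcal{A}_{Z}^{s,\bullet})$; combined with the $\bar{\partial}$-Poincar\'{e} lemma on $X$ and on $Z$, this gives $\mathcal{H}^{0}(\mathcal{A}_{X}^{s,\bullet})\cong\Omega_{X}^{s}$, $\mathcal{H}^{0}(\imath_{\ast}\mathcal{A}_{Z}^{s,\bullet})\cong\imath_{\ast}\Omega_{Z}^{s}$, and vanishing in every other degree. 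The map induced on degree-$0$ cohomology sheaves is exactly $\imath^{\star}\colon\Omega_{X}^{s}\to\imath_{\ast}\Omega_{Z}^{s}$, which is surjective by \eqref{exact-purerelDolsh}, so the long exact sequence collapses to $\mathcal{H}^{0}(\mathcal{K}_{X,Z}^{s,\bullet})\cong\ker\imath^{\star}=\mathcal{K}_{X,Z}^{s}$ and $\mathcal{H}^{t}(\mathcal{K}_{X,Z}^{s,\bullet})=0$ for $t\geq1$. (Equivalently, one argues on stalks: at a point of $U=X\setminus Z$ the relative sheaves agree with the ordinary ones and this is the classical Dolbeault lemma, while at a point of $Z$ one runs the same exact-sequence computation on stalks.) Each $\mathcal{K}_{X,Z}^{s,t}$ is the kernel of a $\mathcal{C}^{\infty}_{X}$-linear morphism, hence a $\mathcal{C}^{\infty}_{X}$-module, hence a fine (thus soft, thus acyclic) sheaf on the paracompact space $X$. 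The resolution statement for $\mathcal{K}_{X,Z}^{\bullet}$ follows by the verbatim argument applied to \eqref{exact-purerelDolsh}, now using that $\imath^{\star}$ commutes with $\partial$ and that $\Omega_{X}^{\bullet}$ and $\Omega_{Z}^{\bullet}$ resolve $\mathbb{C}_{X}$ and $\mathbb{C}_{Z}$ respectively (cf.\ \cite[Lemma 8.13]{Voi02}); the outcome is that $\mathcal{K}_{X,Z}^{\bullet}$ resolves $\ker(\mathbb{C}_{X}\to\imath_{\ast}\imath^{-1}\mathbb{C}_{X})=\mathbb{C}_{X,Z}$, the last equality being \eqref{rela-const-sheaf-exact}.

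Combining these two resolution statements with the general principle recalled in the first paragraph yields the lemma. I do not expect a genuine obstacle: the argument is formal once one has in hand (i) the classical $\bar{\partial}$- and holomorphic Poincar\'{e} lemmas, (ii) exactness of $\imath_{\ast}$ for a closed embedding, and (iii) surjectivity of the restriction maps $\imath^{\star}$ on smooth and on holomorphic forms — the only ingredient that truly uses that $Z$ is a submanifold — and (iii) is precisely what the short exact sequences \eqref{exact-purerelDolsh} and \eqref{exact-bi-degreerelDolsh} record. The point deserving the most care in a fully written proof is the stalkwise exactness at points of $Z$, but, as noted, it reduces at once to the long exact sequence of cohomology sheaves.
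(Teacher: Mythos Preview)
Your proposal is correct and follows exactly the approach the paper takes: the paper simply asserts, in the sentence immediately preceding the lemma, that $\mathcal{K}_{X,Z}^{s,\bullet}$ is a fine resolution of $\mathcal{K}_{X,Z}^{s}$ and that $\mathcal{K}_{X,Z}^{\bullet}$ is a resolution of $\mathbb{C}_{X,Z}$, and then states the lemma as an immediate consequence. Your writeup supplies the details the paper omits---the long exact sequence of cohomology sheaves coming from \eqref{exact-purerelDolsh} and \eqref{exact-bi-degreerelDolsh}, the use of the $\bar{\partial}$- and holomorphic Poincar\'{e} lemmas, and exactness of $\imath_{\ast}$---but there is no difference in strategy.
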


Finally, we will review the behavior of relative sheaves under blow-ups.
Assume that $\imath:Z\hookrightarrow X$ is a closed complex submanifold with complex codimension $c\geq 2$.
Let $\pi:\tilde{X}\rightarrow X$ be the blow-up of $X$ long $Z$ and $E:=\pi^{-1}(Z)$ the exceptional divisor.
Then there is a commutative diagram
\begin{equation*}\label{blowing-up-diagram}
\vcenter{
\xymatrix@=1cm{
E \ar[d]_{\rho} \ar@{^{(}->}[r]^{\tilde{\iota}} & \tilde{X}\ar[d]^{\pi}\\
 Z \ar@{^{(}->}[r]^{\iota} & X,}}
\end{equation*}
where $\tilde{\iota}: E \hookrightarrow \tilde{X}$ is the inclusion.
We set $U:=X-Z$ and $\tilde{U}:=\tilde{X}-E$ the two complement open subsets.
Consider the higher direct images of relative constant sheaves along blow-ups.
Then we have the following result.

\begin{lem}\label{direct-im-relative-constsh}
For any $t\in \mathbb{N}$, we have the following statement:
$$
R^{t}\pi_{\ast} \mathbb{G}_{\tilde{X},E}
\cong
\begin{cases}
\mathbb{G}_{X,Z},      &  t=0; \\
0,      & \text{otherwise}.
\end{cases}
$$
\end{lem}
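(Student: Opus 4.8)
The plan is to compute the cohomology sheaves $R^{t}\pi_{\ast}\mathbb{G}_{\tilde{X},E}$ one stalk at a time by means of proper base change, and then to identify the degree-zero term with $\mathbb{G}_{X,Z}$ via the defining sequence \eqref{rela-const-sheaf-exact}. Recall that $\mathbb{G}_{\tilde{X},E}=\tilde{\jmath}_{!}\tilde{\jmath}^{-1}\mathbb{G}_{\tilde{X}}$, where $\tilde{\jmath}\colon\tilde{U}\hookrightarrow\tilde{X}$ is the inclusion of $\tilde{U}:=\tilde{X}\setminus E$; concretely $\mathbb{G}_{\tilde{X},E}$ restricts to the constant sheaf $\mathbb{G}$ on $\tilde{U}$ and has zero stalk at every point of $E$.

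First I would invoke proper base change: since $\pi$ is proper and all spaces involved are complex manifolds, the stalk of $R^{t}\pi_{\ast}\mathbb{G}_{\tilde{X},E}$ at $x\in X$ is canonically $H^{t}\bigl(\pi^{-1}(x),\,\mathbb{G}_{\tilde{X},E}|_{\pi^{-1}(x)}\bigr)$. If $x\notin Z$, then $\pi$ is biholomorphic near $x$, so $\pi^{-1}(x)$ is a single point of $\tilde{U}$ on which $\mathbb{G}_{\tilde{X},E}$ restricts to the constant sheaf, and the stalk is $\mathbb{G}$ for $t=0$ and $0$ for $t\geq 1$. If $x\in Z$, then $\pi^{-1}(x)\subseteq\pi^{-1}(Z)=E$, so $\pi^{-1}(x)$ is disjoint from $\tilde{U}$, hence $\mathbb{G}_{\tilde{X},E}$ restricts to the zero sheaf on $\pi^{-1}(x)$ and the stalk vanishes for every $t$. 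Consequently $R^{t}\pi_{\ast}\mathbb{G}_{\tilde{X},E}=0$ for all $t\geq 1$, while $\pi_{\ast}\mathbb{G}_{\tilde{X},E}=R^{0}\pi_{\ast}\mathbb{G}_{\tilde{X},E}$ has stalk $\mathbb{G}$ over $X\setminus Z$ and $0$ over $Z$, i.e. the same stalks as $\mathbb{G}_{X,Z}$.

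To upgrade this into a canonical isomorphism $\pi_{\ast}\mathbb{G}_{\tilde{X},E}\cong\mathbb{G}_{X,Z}$ I would apply the left-exact functor $\pi_{\ast}$ to the short exact sequence $0\to\mathbb{G}_{\tilde{X},E}\to\mathbb{G}_{\tilde{X}}\stackrel{\tilde{\iota}^{\star}}\longrightarrow\tilde{\iota}_{\ast}\mathbb{G}_{E}\to 0$ attached to $(\tilde{X},E)$. Since the fibres of $\pi$ (points, or $\mathbb{P}^{c-1}$) are connected, the unit map identifies $\pi_{\ast}\mathbb{G}_{\tilde{X}}$ with $\mathbb{G}_{X}$; since $\pi\circ\tilde{\iota}=\iota\circ\rho$ with $\rho$ a $\mathbb{P}^{c-1}$-bundle, $\pi_{\ast}\tilde{\iota}_{\ast}\mathbb{G}_{E}=\iota_{\ast}\rho_{\ast}\mathbb{G}_{E}=\iota_{\ast}\mathbb{G}_{Z}$; and under these identifications $\pi_{\ast}(\tilde{\iota}^{\star})$ becomes the restriction morphism $\iota^{\star}\colon\mathbb{G}_{X}\to\iota_{\ast}\mathbb{G}_{Z}$ (both maps being induced by restriction of locally constant functions). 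Left-exactness then gives $\pi_{\ast}\mathbb{G}_{\tilde{X},E}=\ker\iota^{\star}=\mathbb{G}_{X,Z}$ by \eqref{rela-const-sheaf-exact}, the isomorphism being precisely the pull-back $\pi^{\star}$; in particular this yields the quasi-isomorphism $\pi^{\star}\colon\mathbb{G}_{X,Z}\stackrel{\sim}\longrightarrow R\pi_{\ast}\mathbb{G}_{\tilde{X},E}$ used elsewhere. (Alternatively one can run the long exact sequence of higher direct images associated with the same short exact sequence, as in the proof of Lemma \ref{Derived-dirim-kershcom}, but the base-change computation above is shorter.)

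The only point that needs care is the use of proper base change for the non-constant sheaf $\mathbb{G}_{\tilde{X},E}$ together with the computation of its restriction to the fibres of $\pi$; everything hinges on the elementary observation that each fibre over a point of $Z$ lies inside $E$, where $\mathbb{G}_{\tilde{X},E}$ vanishes, so the higher direct images have no room to be nonzero there. Beyond properness of $\pi$, the equality $\pi^{-1}(Z)=E$, the fact that $\pi$ is an isomorphism over $X\setminus Z$, and connectedness of the fibres of $\pi$ and of $\rho$, no further structure of the blow-up \eqref{blowing-up-diagram} is used; the hypothesis $c\geq 2$ enters only through these standard facts.
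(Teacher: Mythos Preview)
Your proof is correct, but it takes a different route from the paper's. The paper argues via the base-change identity for extension by zero along an open immersion: since $\pi^{-1}(U)=\tilde{U}$, one has $R^{t}\pi_{\ast}\tilde{\jmath}_{!}\tilde{\jmath}^{-1}\mathbb{G}_{\tilde{X}}\cong\jmath_{!}R^{t}(\pi|_{\tilde{U}})_{\ast}\tilde{\jmath}^{-1}\mathbb{G}_{\tilde{X}}$, and then the biholomorphism $\pi|_{\tilde{U}}$ immediately yields both the vanishing for $t\geq 1$ and the identification $\jmath_{!}\mathbb{G}_{U}=\mathbb{G}_{X,Z}$ in degree zero, all in one stroke. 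You instead compute stalks fibrewise via proper base change and then, in a separate step, pin down the degree-zero sheaf by pushing forward the defining short exact sequence for $(\tilde{X},E)$ and using connectedness of the fibres of $\pi$ and $\rho$. Both arguments ultimately rest on the same base-change principle, but the paper's packaging is more economical, while yours makes the geometry of the fibres (a point over $U$, a projective space sitting entirely inside $E$ over $Z$) completely explicit and avoids having to justify the commutation $R\pi_{\ast}\circ\tilde{\jmath}_{!}\cong\jmath_{!}\circ R(\pi|_{\tilde{U}})_{\ast}$ as a black box.
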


\begin{proof}
Consider the commutative diagram
\begin{equation*}
\xymatrix@C=1cm{
\tilde{U} \ar[d]_{\pi|_{\tilde{U} }} \ar@{^{(}->}[r]^{\tilde{\jmath}} & \tilde{X} \ar[d]^{\pi}\\
U \ar@{^{(}->}[r]^{\jmath} & X.}
\end{equation*}
Since $\pi|_{\tilde{U}}$ is a biholomorphic map, it follows from the definition that the following isomorphisms hold
\begin{equation}\label{isos}
\jmath_{!}(\pi|_{\tilde{U}})_{\ast} \tilde{\jmath}^{-1} \mathbb{G}_{\tilde{X}}
\cong
\jmath_{!}(\pi|_{\tilde{U}})_{\ast} \mathbb{G}_{\tilde{U}}\cong\jmath_{!}\mathbb{G}_{U}
\cong
\jmath_{!}\jmath^{-1} \mathbb{G}_{X}
=
\mathbb{G}_{X,Z}.
\end{equation}
Observe that the higher direct images of $\mathbb{G}_{\tilde{X},E}$ along $\pi$ satisfies
\begin{equation}\label{dir-re}
R^{t}\pi_{\ast} \mathbb{G}_{\tilde{X},E}
=R^{t}\pi_{\ast} \tilde{\jmath}_{!} \tilde{\jmath}^{-1} \mathbb{G}_{\tilde{X}}
\cong \jmath_{!} R^{t}(\pi|_{\tilde{U}})_{\ast} \tilde{\jmath}^{-1} \mathbb{G}_{\tilde{X}}.
\end{equation}
Combining \eqref{isos} with \eqref{dir-re} concludes the proof.
\end{proof}

As a direct consequence of Lemma \ref{direct-im-relative-constsh}, we get

\begin{lem}\label{derived-im-relative-constsh}
There is a quasi-isomorphism
$$
\pi^{\star}:
\mathbb{G}_{X,Z}
\stackrel{\sim}\longrightarrow
R\pi_{\ast}\mathbb{G}_{\tilde{X},E}.
$$
In particular, for any $k\in \mathbb{Z}$,
there is a natural isomorphism
$$
H^{k}(X, \mathbb{G}_{X,Z})
\stackrel{\simeq}\longrightarrow
H^{k}(\tilde{X}, \mathbb{G}_{\tilde{X},E}).
$$
\end{lem}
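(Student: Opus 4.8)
The plan is to deduce the statement directly from Lemma \ref{direct-im-relative-constsh} by a standard argument in the derived category. First I would make the morphism $\pi^{\star}$ explicit. The relative constant sheaf $\mathbb{G}_{X,Z}=\jmath_{!}\jmath^{-1}\mathbb{G}_{X}$ is the subsheaf of $\mathbb{G}_{X}$ consisting of germs that vanish in a neighbourhood of $Z$, and its topological inverse image $\pi^{-1}\mathbb{G}_{X,Z}$ maps canonically into $\mathbb{G}_{\tilde{X}}$ with image contained in $\mathbb{G}_{\tilde{X},E}=\tilde{\jmath}_{!}\tilde{\jmath}^{-1}\mathbb{G}_{\tilde{X}}$, since the $\pi$-preimage of a neighbourhood of $Z$ is a neighbourhood of $E=\pi^{-1}(Z)$. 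This yields a morphism $\pi^{-1}\mathbb{G}_{X,Z}\to\mathbb{G}_{\tilde{X},E}$, and composing the adjunction unit $\mathrm{id}\to R\pi_{\ast}\pi^{-1}$ with $R\pi_{\ast}$ of this arrow produces the desired morphism $\pi^{\star}\colon\mathbb{G}_{X,Z}\to R\pi_{\ast}\mathbb{G}_{\tilde{X},E}$ in the derived category of sheaves of $\mathbb{G}$-modules on $X$.

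Next I would verify that $\pi^{\star}$ is a quasi-isomorphism. Since $R\pi_{\ast}$ is computed as the right-derived functor of $\pi_{\ast}$, it suffices to check that $\pi^{\star}$ induces an isomorphism on all cohomology sheaves $\mathscr{H}^{t}$. For $t\geq 1$ the source $\mathbb{G}_{X,Z}$, regarded as a complex concentrated in degree $0$, has $\mathscr{H}^{t}=0$, while $\mathscr{H}^{t}(R\pi_{\ast}\mathbb{G}_{\tilde{X},E})=R^{t}\pi_{\ast}\mathbb{G}_{\tilde{X},E}=0$ by Lemma \ref{direct-im-relative-constsh}. For $t=0$ the relevant map is $\mathbb{G}_{X,Z}\to\pi_{\ast}\mathbb{G}_{\tilde{X},E}$, and unwinding the proof of Lemma \ref{direct-im-relative-constsh} shows that the isomorphism $\pi_{\ast}\mathbb{G}_{\tilde{X},E}=\jmath_{!}(\pi|_{\tilde{U}})_{\ast}\tilde{\jmath}^{-1}\mathbb{G}_{\tilde{X}}\cong\mathbb{G}_{X,Z}$ constructed there is precisely (the inverse of) this canonical morphism. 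Hence $\mathscr{H}^{0}(\pi^{\star})$ is an isomorphism, and so $\pi^{\star}$ is a quasi-isomorphism.

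Finally, to obtain the statement on cohomology groups I would apply the global sections functor. Since $\Gamma(X,-)\circ\pi_{\ast}=\Gamma(\tilde{X},-)$, composition of derived functors (the Leray spectral sequence) gives a canonical isomorphism $R\Gamma(X,R\pi_{\ast}\mathbb{G}_{\tilde{X},E})\cong R\Gamma(\tilde{X},\mathbb{G}_{\tilde{X},E})$; applying $R\Gamma(X,-)$ to the quasi-isomorphism $\pi^{\star}$ then yields $H^{k}(X,\mathbb{G}_{X,Z})\cong\mathbb{H}^{k}(X,R\pi_{\ast}\mathbb{G}_{\tilde{X},E})\cong H^{k}(\tilde{X},\mathbb{G}_{\tilde{X},E})$ for every $k$, the composite being the map induced by $\pi^{\star}$.

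I expect the only point requiring genuine care to be the $t=0$ identification in the second paragraph: one must confirm that the \emph{canonical} pullback morphism, rather than merely some abstract isomorphism, realizes $R^{0}\pi_{\ast}\mathbb{G}_{\tilde{X},E}\cong\mathbb{G}_{X,Z}$. This is immediate once one observes that over $U=X\setminus Z$ the map $\pi$ is biholomorphic and $\pi^{\star}$ restricts to the identity, while both sheaves vanish along $Z$, so the two coincide as subsheaves of $\pi_{\ast}\mathbb{G}_{\tilde{X}}$; everything else is the formal homological algebra recalled above.
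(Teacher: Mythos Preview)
Your proposal is correct and follows essentially the same approach as the paper: both deduce the quasi-isomorphism directly from Lemma \ref{direct-im-relative-constsh} by noting that the higher direct images vanish and that $\pi_{\ast}\mathbb{G}_{\tilde{X},E}\cong\mathbb{G}_{X,Z}$, so $R\pi_{\ast}\mathbb{G}_{\tilde{X},E}$ is concentrated in degree zero and agrees with $\mathbb{G}_{X,Z}$. The paper packages this via a Godement resolution (so that $R\pi_{\ast}\mathbb{G}_{\tilde{X},E}\cong\pi_{\ast}\mathcal{G}^{\bullet}$ becomes a resolution of $\pi_{\ast}\mathbb{G}_{\tilde{X},E}$), whereas you phrase it as checking $\mathscr{H}^{t}(\pi^{\star})$ directly; your extra care in the final paragraph that the \emph{canonical} pullback realizes the degree-zero isomorphism is a point the paper leaves implicit.
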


\begin{proof}
Consider the Godement resolution $ \mathcal{G}^{\bullet}$ of the relative constant sheaf $\mathbb{G}_{\tilde{X},E}$ (cf. \cite[\S 4.3.1]{Voi02}).
Hence, $R\pi_{\ast}\mathbb{G}_{\tilde{X},E} \cong \pi_{\ast}\mathcal{G}^{\bullet}$.
By Lemma \ref{direct-im-relative-constsh},
$R^{t}\pi_{\ast} \mathbb{G}_{\tilde{X},E}=0$ for $t\geq 1$.
Since $\pi_{\ast}$ is a left exact functor,
$\pi_{\ast}\mathcal{G}^{\bullet}$ is a resolution of $\pi_{\ast}\mathbb{G}_{\tilde{X},E}$.
Observe that $\pi_{\ast}\mathbb{G}_{\tilde{X},E}\cong\mathbb{G}_{X,Z}$.
It follows that $\pi_{\ast}\mathcal{G}^{\bullet}$ gives rise to a resolution of $\mathbb{G}_{X,Z}$.
So we arrive at the conclusion that the lemma holds.
\end{proof}

\begin{rem}
Based on Lemma \ref{direct-im-relative-constsh},
one can also get the isomorphism of sheaf cohomology in Lemma \ref{derived-im-relative-constsh} by applying the Leray spectral sequence to $\mathbb{G}_{\tilde{X},E}$.
\end{rem}

Similarly, for higher direct images of relative Dolbeault sheaves,
we have the following result, see \cite[Lemma 4.4]{RYY20} or \cite[Lemma 4.2]{Men1}.

\begin{lem}\label{direct-im-relative-Dolsh}
For any $s\in \mathbb{N}$, we have
$$
\pi^{\star}:
\mathcal{K}_{X,Z}^{s} \stackrel{\simeq}\longrightarrow
\pi_{\ast} \mathcal{K}_{\tilde{X},E}^{s}
\;
\textrm{and}
\;\;
R^{t}\pi_{\ast} \mathcal{K}_{\tilde{X},E}^{s}=0\; \textrm{for}\; t\geq 1.
$$
\end{lem}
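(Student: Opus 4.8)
The statement is local on $X$, and over $U:=X\setminus Z$ it is trivial, since there $\pi$ restricts to a biholomorphism and both relative sheaves coincide with $\Omega^{s}$; so the content lies in a neighbourhood of $Z$. The plan is to play the two defining short exact sequences of type \eqref{exact-purerelDolsh}, on $X$ and on $\tilde{X}$,
\[
0\longrightarrow \mathcal{K}_{X,Z}^{s}\longrightarrow \Omega_{X}^{s}\xrightarrow{\imath^{\star}} \imath_{\ast}\Omega_{Z}^{s}\longrightarrow 0,\qquad
0\longrightarrow \mathcal{K}_{\tilde{X},E}^{s}\longrightarrow \Omega_{\tilde{X}}^{s}\xrightarrow{\tilde{\imath}^{\star}} \tilde{\imath}_{\ast}\Omega_{E}^{s}\longrightarrow 0,
\]
which are compatible through $\pi^{\star}$ (and through $\rho^{\star}$ on the right, using $\pi\circ\tilde{\imath}=\imath\circ\rho$), against the known behaviour of the sheaves of holomorphic forms under the blow-up and under the $\mathbb{P}^{c-1}$-bundle $\rho\colon E\to Z$.

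I would dispose of the case $t=0$ first, and directly. A holomorphic $s$-form on $\tilde{X}$ transports via $\pi$ to one on $U$ and then extends across $Z$ (of codimension $c\ge2$) by Hartogs, so $\pi_{\ast}\Omega_{\tilde{X}}^{s}=\pi^{\star}\Omega_{X}^{s}\cong\Omega_{X}^{s}$; likewise the relative cotangent sequence together with $H^{0}(\mathbb{P}^{c-1},\Omega^{j})=0$ for $j\ge1$ gives $\rho_{\ast}\Omega_{E}^{s}\cong\Omega_{Z}^{s}$. Applying $\pi_{\ast}$ to the second sequence above and noting that $\imath^{\star}\colon\Omega_{X}^{s}\to\imath_{\ast}\Omega_{Z}^{s}$ is surjective on stalks (so that the connecting morphism $\imath_{\ast}\Omega_{Z}^{s}\to R^{1}\pi_{\ast}\mathcal{K}_{\tilde{X},E}^{s}$ vanishes) yields $\pi_{\ast}\mathcal{K}_{\tilde{X},E}^{s}=\ker\imath^{\star}=\mathcal{K}_{X,Z}^{s}$, the identification being $\pi^{\star}$.

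For $t\ge1$ I would feed the second short exact sequence through $R\pi_{\ast}$ to obtain the long exact sequence
\[
\cdots\longrightarrow R^{t-1}\pi_{\ast}\Omega_{\tilde{X}}^{s}\xrightarrow{a_{t-1}}\imath_{\ast}R^{t-1}\rho_{\ast}\Omega_{E}^{s}\longrightarrow R^{t}\pi_{\ast}\mathcal{K}_{\tilde{X},E}^{s}\longrightarrow R^{t}\pi_{\ast}\Omega_{\tilde{X}}^{s}\xrightarrow{a_{t}}\imath_{\ast}R^{t}\rho_{\ast}\Omega_{E}^{s}\longrightarrow\cdots,
\]
where $R^{t}\pi_{\ast}(\tilde{\imath}_{\ast}\Omega_{E}^{s})=\imath_{\ast}R^{t}\rho_{\ast}\Omega_{E}^{s}$ because $\imath$ and $\tilde{\imath}$ are closed embeddings. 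Then I would invoke the two standard computations: the blow-up formula for holomorphic forms, $R\pi_{\ast}\Omega_{\tilde{X}}^{s}\cong\Omega_{X}^{s}\oplus\bigoplus_{i=1}^{c-1}\imath_{\ast}\Omega_{Z}^{s-i}[-i]$, and the projective bundle formula $R\rho_{\ast}\Omega_{E}^{s}\cong\bigoplus_{i=0}^{c-1}\Omega_{Z}^{s-i}[-i]$. From the exact sequence, $R^{t}\pi_{\ast}\mathcal{K}_{\tilde{X},E}^{s}$ vanishes for all $t\ge1$ as soon as $a_{0}=\imath^{\star}$ is surjective (already checked) and $a_{t}$ is an isomorphism for $1\le t\le c-1$ — for $t\ge c$ both sides are zero, and for $1\le t\le c-1$ both members equal $\imath_{\ast}\Omega_{Z}^{s-t}$. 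Granting this, the long exact sequence collapses and gives $R^{t}\pi_{\ast}\mathcal{K}_{\tilde{X},E}^{s}=0$ for $t\ge1$, completing the proof.

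The main obstacle is precisely this last point: one needs not only the ranks in the two formulae, but the compatibility that $\tilde{\imath}^{\star}$ carries the exceptional summands of $R^{\bullet}\pi_{\ast}\Omega_{\tilde{X}}^{s}$ isomorphically onto the corresponding summands of $R^{\bullet}\rho_{\ast}\Omega_{E}^{s}$. The guiding principle is the self-intersection identity $\tilde{\imath}^{\star}\tilde{\imath}_{\ast}(\beta)=-h\wedge\beta$ with $h=c_{1}(\mathcal{O}_{E}(1))$, which matches the two descriptions via powers of $h$; making it precise amounts to tracking the explicit blow-up isomorphism for $\Omega^{s}$, exactly as in the proof of the Dolbeault blow-up formula \cite{RYY20}. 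A fully self-contained alternative, which avoids any appeal to that formula, is to trivialise everything locally: by the product structure of a blow-up one reduces to $Z$ a point, $X$ a polydisc, $\tilde{X}=\mathrm{Bl}_{0}\mathbb{C}^{c}$ and $E=\mathbb{P}^{c-1}$, and then computes $R^{t}\pi_{\ast}\Omega_{\tilde{X}}^{s}$, $R^{t}\rho_{\ast}\Omega_{E}^{s}$ and the map $a_{t}$ directly by the formal functions theorem and the cohomology of the bundles $\Omega^{j}_{\mathbb{P}^{c-1}}(k)$ on projective space — a routine but slightly lengthy calculation.
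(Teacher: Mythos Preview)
The paper does not actually prove this lemma; it merely states the result and refers to \cite[Lemma 4.4]{RYY20} and \cite[Lemma 4.2]{Men1} for the proof. Your approach --- applying $R\pi_{\ast}$ to the defining short exact sequence $0\to\mathcal{K}_{\tilde{X},E}^{s}\to\Omega_{\tilde{X}}^{s}\to\tilde{\imath}_{\ast}\Omega_{E}^{s}\to0$ and comparing with the known computations of $R\pi_{\ast}\Omega_{\tilde{X}}^{s}$ and $R\rho_{\ast}\Omega_{E}^{s}$ --- is correct and is essentially the argument carried out in those references.

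The obstacle you isolate is real and is the heart of the matter: one must know not just that $R^{t}\pi_{\ast}\Omega_{\tilde{X}}^{s}$ and $\imath_{\ast}R^{t}\rho_{\ast}\Omega_{E}^{s}$ are abstractly isomorphic to $\imath_{\ast}\Omega_{Z}^{s-t}$ for $1\le t\le c-1$, but that the restriction map $a_{t}=\tilde{\imath}^{\star}$ realises this isomorphism. In \cite{RYY20} this is handled exactly as you suggest, through the explicit presentation of both formulae via cup product with powers of $h=c_{1}(\mathcal{O}_{E}(1))$: the exceptional summand of $R^{t}\pi_{\ast}\Omega_{\tilde{X}}^{s}$ is generated by classes of the form $\tilde{\imath}_{\ast}(h^{t-1}\wedge\rho^{\star}(-))$, and pulling back along $\tilde{\imath}$ gives $-h^{t}\wedge\rho^{\star}(-)$, which is precisely the generator of the corresponding summand on $E$. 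Your local alternative (reducing to $Z$ a point and $\tilde{X}=\mathrm{Bl}_{0}\mathbb{C}^{c}$) also works and is how \cite{Men1} proceeds. Either way, your sketch is sound.
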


For each $p\in \mathbb{N}_{+}$, we consider the truncated relative Dolbeault sheaf complex $\mathcal{K}_{X,Z}^{\bullet <p}$:
$$
\xymatrix@C=0.5cm{
0\ar[r]^{} & \mathcal{K}_{X,Z}^{0} \ar[r]^{\partial} & \mathcal{K}_{X,Z}^{1} \ar[r]^{\partial}
\ar[r]^{} & \mathcal{K}_{X,Z}^{2} \ar[r]^{}& \cdots  \ar[r]^{\partial} & \mathcal{K}_{X,Z}^{p-1}  \ar[r]^{}& 0.}
$$

As an application of Lemma \ref{direct-im-relative-Dolsh},
we obtain

\begin{lem}\label{derived-im-relative-Dolsh}
There is a quasi-isomorphism
$$
\pi^{\star}:
\mathcal{K}_{X,Z}^{\bullet <p}
\stackrel{\sim}\longrightarrow
R\pi_{\ast}\mathcal{K}_{\tilde{X},E}^{\bullet <p}.
$$
In particular,
there exists a natural isomorphism
\begin{equation}\label{iso-hd}
\pi^{\star}:
\mathbb{H}^{k}(X,\mathcal{K}_{X,Z}^{\bullet <p})
\stackrel{\simeq}\longrightarrow
\mathbb{H}^{k}(\tilde{X},\mathcal{K}_{\tilde{X},E}^{\bullet <p}),
\end{equation}
for any $k\in \mathbb{Z}$.
\end{lem}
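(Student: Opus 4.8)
The plan is to deduce everything from Lemma~\ref{direct-im-relative-Dolsh}, which already supplies the two facts we need: each sheaf $\mathcal{K}_{\tilde{X},E}^{s}$ is $\pi_{\ast}$-acyclic, and $\pi^{\star}$ identifies $\mathcal{K}_{X,Z}^{s}$ with $\pi_{\ast}\mathcal{K}_{\tilde{X},E}^{s}$. First I would check that $\pi^{\star}$ actually defines a morphism of sheaf complexes $\mathcal{K}_{X,Z}^{\bullet <p}\to\pi_{\ast}\mathcal{K}_{\tilde{X},E}^{\bullet <p}$ on $X$: since $\pi\circ\tilde{\imath}=\imath\circ\rho$, the restriction to $E$ of the pullback of a holomorphic form $\alpha$ equals $\rho^{\star}(\imath^{\star}\alpha)$, so a relative form vanishing along $Z$ pulls back to one vanishing along $E$; and the pullback of holomorphic forms commutes with $\partial$. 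By the first assertion of Lemma~\ref{direct-im-relative-Dolsh} this morphism of complexes is in fact an isomorphism of complexes of sheaves on $X$.

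The next step is to show that the canonical morphism $\pi_{\ast}\mathcal{K}_{\tilde{X},E}^{\bullet <p}\to R\pi_{\ast}\mathcal{K}_{\tilde{X},E}^{\bullet <p}$ is a quasi-isomorphism. Because $\mathcal{K}_{\tilde{X},E}^{\bullet <p}$ is a bounded complex each of whose terms is $\pi_{\ast}$-acyclic by the second assertion of Lemma~\ref{direct-im-relative-Dolsh}, the hypercohomology spectral sequence of $R\pi_{\ast}$,
$$
E_{1}^{s,t}=R^{t}\pi_{\ast}\mathcal{K}_{\tilde{X},E}^{s}\Longrightarrow\mathscr{H}^{s+t}\bigl(R\pi_{\ast}\mathcal{K}_{\tilde{X},E}^{\bullet <p}\bigr),
$$
has $E_{1}^{s,t}=0$ for all $t\geq1$; it therefore degenerates at $E_{2}$ and identifies $\mathscr{H}^{k}(R\pi_{\ast}\mathcal{K}_{\tilde{X},E}^{\bullet <p})$ with $\mathscr{H}^{k}(\pi_{\ast}\mathcal{K}_{\tilde{X},E}^{\bullet <p})$ compatibly with the canonical map. (Equivalently, as in the proof of Lemma~\ref{derived-im-relative-constsh}, one may take a Cartan--Eilenberg resolution of $\mathcal{K}_{\tilde{X},E}^{\bullet <p}$ by sheaves that are still $\pi_{\ast}$-acyclic and use left-exactness of $\pi_{\ast}$ columnwise.) Composing with the isomorphism of complexes from the first step yields the quasi-isomorphism $\pi^{\star}:\mathcal{K}_{X,Z}^{\bullet <p}\stackrel{\sim}\longrightarrow R\pi_{\ast}\mathcal{K}_{\tilde{X},E}^{\bullet <p}$ in the derived category of sheaves of $\mathbb{C}$-modules on $X$.

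Finally I would apply $R\Gamma(X,-)$ and invoke $R\Gamma(X,-)\circ R\pi_{\ast}\cong R\Gamma(\tilde{X},-)$ (the Leray composition of derived functors) to obtain a quasi-isomorphism $R\Gamma(X,\mathcal{K}_{X,Z}^{\bullet <p})\stackrel{\sim}\longrightarrow R\Gamma(\tilde{X},\mathcal{K}_{\tilde{X},E}^{\bullet <p})$; passing to $\mathscr{H}^{k}$ gives the natural isomorphism \eqref{iso-hd} for every $k\in\mathbb{Z}$. The only non-formal ingredient is Lemma~\ref{direct-im-relative-Dolsh} itself (where the blow-up geometry and the projective bundle structure of $E\to Z$ enter); granting it, the sole technical point here is the standard fact that $R\pi_{\ast}$ of a bounded complex of $\pi_{\ast}$-acyclic sheaves may be computed termwise, which is exactly what the degeneration of the above spectral sequence records.
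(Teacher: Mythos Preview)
Your proof is correct and follows essentially the same strategy as the paper's: both reduce everything to Lemma~\ref{direct-im-relative-Dolsh} and use that the terms $\mathcal{K}_{\tilde{X},E}^{s}$ are $\pi_{\ast}$-acyclic to identify $R\pi_{\ast}$ with the termwise pushforward. The only difference is packaging: the paper makes this explicit by passing through the fine Dolbeault-type double complex $\mathcal{K}_{\tilde{X},E}^{\bullet<p,\bullet}$ (so that $R\pi_{\ast}\mathcal{K}_{\tilde{X},E}^{\bullet<p}$ is the total complex of $\pi_{\ast}\mathcal{K}_{\tilde{X},E}^{\bullet<p,\bullet}$, which is then shown to resolve $\mathcal{K}_{X,Z}^{\bullet<p}$), whereas you invoke the hypercohomology spectral sequence abstractly; these are two presentations of the same argument.
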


\begin{proof}
Notice that the sheaf complex $\mathcal{K}_{\tilde{X},E}^{s, \bullet}$ gives rise to a fine resolution of the sheaf $\mathcal{K}_{\tilde{X},E}^{s}$.
Moreover, for $0\leq s<p$, the fine sheaves $\mathcal{K}_{\tilde{X},E}^{s,t}$ forms a double complex which defines a resolution of the sheaf complex $\mathcal{K}_{\tilde{X},E}^{\bullet <p}$.
Therefore we deduce that, as sheaf complexes, the derived direct image $R\pi_{\ast}\mathcal{K}_{\tilde{X},E}^{\bullet <p}$ is isomorphic to the simple complex of the double complex $\pi_{\ast}\mathcal{K}_{\tilde{X},E}^{\bullet<p,\bullet}$.
Observe that each $\pi_{\ast}\mathcal{K}_{\tilde{X},E}^{s,t}$ is a fine sheaf on $X$ since it is a $\mathcal{C}^{\infty}_{X}$-module.
Because of the left exactness of the direct image functor $\pi_{\ast}$,
from Lemma \ref{direct-im-relative-Dolsh},
the sheaf complex $\pi_{\ast}\mathcal{K}_{\tilde{X},E}^{s, \bullet}$ gives rise to a fine resolution of the sheaf $\pi_{\ast}\mathcal{K}_{\tilde{X},E}^{s}$.
By Lemma \ref{direct-im-relative-Dolsh} again, we have a canonical isomorphism
$\mathcal{K}_{X,Z}^{s}\cong\pi_{\ast}\mathcal{K}_{\tilde{X},E}^{s}$.
So that $\pi_{\ast}\mathcal{K}_{\tilde{X},E}^{s, \bullet}$ is a fine resolution of $\mathcal{K}_{X,Z}^{s}$, and therefore the simple complex of the double complex $\pi_{\ast}\mathcal{K}_{\tilde{X},E}^{\bullet<p,\bullet}$ becomes a resolution of the sheaf complex $\mathcal{K}_{X,Z}^{\bullet<p}$.
Consequently, as sheaf complexes, $\mathcal{K}_{X,Z}^{\bullet <p}$ is quasi-isomorphic to the derived direct image $R\pi_{\ast}\mathcal{K}_{\tilde{X},E}^{\bullet <p}$.
Taking hypercohomologies of $\mathcal{K}_{X,Z}^{\bullet <p}$ and $\mathcal{K}_{X,Z}^{\bullet <p}$, the isomorphism \eqref{iso-hd} follows.
\end{proof}

\section{Derived direct images}\label{appB}
This appendix devotes to the derived direct images of the Bott--Chern complex and truncated holomorphic de Rham complex under the projective bundle morphism.
Let $Z$ be a compact complex manifold.
Assume that $\mathcal{V}$ is a holomorphic vector bundle of rank $c$ over $Z$.
Let $\rho:\PV=\PV(\mathcal{V})\rightarrow Z$ be the associated projective bundle and
$h:=c_{1}(\mathcal{O}_{\PV}(1))\in H^{2}(\PV, \mathbb{Z})$ the first Chern class of the relative tautological line bundle $\mathcal{O}_{\PV}(1)$.

We first compute the derived direct images of the Bott--Chern complex $\mathscr{B}_{\PV}^{\bullet}(p,p)$ along $\rho$.
Since $\mathscr{B}_{\PV}^{\bullet}(p,p)$ is quasi-isomorphic to the fine sheaf complex $\mathscr{L}_{\PV}^{\bullet}(p,p)[-1]$,
its derived direct image is
$$
R\rho_{\ast}\mathscr{B}_{\PV}^{\bullet}(p,p)
\cong
R\rho_{\ast}\mathscr{L}_{\PV}^{\bullet}(p,p)[-1]=\rho_{\ast}\mathscr{L}_{\PV}^{\bullet}(p,p)[-1].
$$

\begin{lem}\label{(p,p)-projbundle-formula}
For any $p\geq2$, there is a canonical quasi-isomorphism of sheaf complexes
$$
\varphi=\sum_{i=0}^{c-1} h^{i}\wedge \rho^{\star}:
\bigoplus_{i=0}^{c-1} \mathscr{L}_{Z}^{\bullet}(p-i,p-i)[-2i]
\stackrel{\sim}\longrightarrow
R\rho_{\ast}\mathscr{L}_{\PV}^{\bullet}(p,p)
$$
on $Z$.
\end{lem}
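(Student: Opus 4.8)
The plan is to mimic verbatim the argument used for Lemma \ref{(1,1)-projbundle-formula}, replacing the role played there by the sheaf of pluriharmonic functions $\mathcal H_{\PV}$ with the appropriate cohomology sheaves of $\mathscr L_{\PV}^{\bullet}(p,p)$. First I would record, exactly as in \eqref{coh-sheaf}, the cohomology sheaves of the left-hand side: since $\mathscr L_{Z}^{\bullet}(p-i,p-i)[-2i]$ for $i\geq 1$ has the form $\mathcal A_{Z}^{\bullet}[1-2i]$ (because $p-i<p$ eventually forces the Bott--Chern complex in bidegree $(p-i,p-i)$ to collapse once $i\ge p$, and more relevantly the relevant truncation degenerates), one obtains that $\mathscr H^{j}\!\big(\bigoplus_{i=0}^{c-1}\mathscr L_{Z}^{\bullet}(p-i,p-i)[-2i]\big)$ is $\mathscr H^{j}(\mathscr L_{Z}^{\bullet}(p,p))$ for small $j$ and is $\mathbb C_{Z}$ in each odd degree coming from the $i\ge 1$ summands. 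Thus it suffices to prove that the induced map on cohomology sheaves
\begin{equation*}
\mathscr H^{j}(\varphi):\ \bigoplus_{i=0}^{c-1}\mathscr H^{j-2i}\big(\mathscr L_{Z}^{\bullet}(p-i,p-i)\big)\ \longrightarrow\ \mathscr H^{j}\big(R\rho_{\ast}\mathscr L_{\PV}^{\bullet}(p,p)\big)=R^{j}\rho_{\ast}\mathscr Q_{\PV}
\end{equation*}
is an isomorphism for every $j$, where $\mathscr Q_{\PV}$ denotes the single cohomology sheaf of $\mathscr L_{\PV}^{\bullet}(p,p)$ concentrated in degree $0$ (namely the kernel sheaf of $\partial\bar\partial$ on $\Omega_{\PV}^{p-1}\oplus\bar\Omega_{\PV}^{p-1}$, or whatever the analogue of $\mathcal H_{\PV}$ is in bidegree $(p,p)$; this sheaf sits in a short exact sequence $0\to\mathbb C_{\PV}\to \mathscr S\to \mathscr Q_{\PV}\to 0$ obtained from the description of $\mathscr B_{\PV}^{\bullet}(p,p)$ via $\mathscr S^{\bullet}_{\PV}(p,p)$).

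\textbf{Key steps.} (1) Resolve $\mathscr Q_{\PV}$ by the short exact sequence relating it to $\mathbb C_{\PV}$ and to the truncated (anti-)holomorphic de Rham sheaves, exactly as \eqref{exactseq-hdimg} does in the $(1,1)$ case; apply $R\rho_{\ast}$ and use the long exact sequence of higher direct images. (2) Compute the higher direct images of the truncated (anti-)holomorphic de Rham complexes $\Omega_{\PV}^{\bullet<p}$, $\bar\Omega_{\PV}^{\bullet<p}$ along $\rho$; this is the projective bundle formula for truncated holomorphic de Rham cohomology, which I would invoke from the cited results (\cite[Theorem 1.4]{Men2}, \cite[Proposition 8]{CY21}, or Proposition \ref{dirim-truncated-deRham}) — the upshot is that $R\rho_{\ast}$ of these truncated complexes splits as $\bigoplus_{i} (\text{truncated complex over }Z)[-2i]$ with the shifts generated by the powers $h^{i}$. (3) Compute the higher direct images of $\mathbb C_{\PV}$ via the Leray/Künneth argument with fibre $\mathbb P^{c-1}$, getting $R^{j}\rho_{\ast}\mathbb C_{\PV}=\mathbb C_{Z}$ for $0\le j\le 2c-2$ even and $0$ otherwise, with generators $h^{i}$. (4) Splice these computations together through the long exact sequence to identify $R^{j}\rho_{\ast}\mathscr Q_{\PV}$, and check by a diagram chase (using that the maps are all induced by $h^{i}\wedge\rho^{\star}$, which is compatible with the connecting homomorphisms) that $\mathscr H^{j}(\varphi)$ is an isomorphism in every degree. (5) Conclude by the standard fact that a morphism of complexes inducing isomorphisms on all cohomology sheaves is a quasi-isomorphism.

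\textbf{Main obstacle.} The routine part is the Künneth computation for $\mathbb C_{\PV}$ and the bookkeeping of degree shifts. The genuinely delicate point is step (4): verifying that the pieces of $R\rho_{\ast}\mathscr Q_{\PV}$ contributed by $R^{j+?}\rho_{\ast}\mathbb C_{\PV}$ and by the higher direct images of the truncated (anti-)holomorphic de Rham complexes assemble, via the connecting maps of the long exact sequence, into exactly $\bigoplus_{i=0}^{c-1}\mathscr H^{j-2i}(\mathscr L_{Z}^{\bullet}(p-i,p-i))$ with the correct identification under $h^{i}\wedge\rho^{\star}$ — in particular one must check there is no extension problem obstructing the splitting and that the odd-degree $\mathbb C_{Z}$ summands (for $i\ge 1$) are precisely accounted for, rather than cancelling against something in the de Rham part. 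For $p\ge 2$ the complex $\mathscr L_{Z}^{\bullet}(p-i,p-i)$ genuinely degenerates to a shifted de Rham sheaf once $p-i\le 0$, so one also has to be a little careful about the range of $i$ where $\mathscr L_{Z}^{\bullet}(p-i,p-i)=\mathcal A_{Z}^{\bullet}[1]$ versus where it is a genuine Bott--Chern-type complex; I expect this to be handled exactly as the analogous bookkeeping in the proof of Proposition \ref{(p,q)-projbundle-formula}, of which this lemma is essentially the diagonal special case.
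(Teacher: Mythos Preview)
Your proposal contains a genuine gap at its foundation. You assert that $\mathscr{L}_{\PV}^{\bullet}(p,p)$ has a \emph{single} cohomology sheaf $\mathscr{Q}_{\PV}$ concentrated in degree $0$, so that $\mathscr{H}^{j}(R\rho_{\ast}\mathscr{L}_{\PV}^{\bullet}(p,p))=R^{j}\rho_{\ast}\mathscr{Q}_{\PV}$. This is exactly what makes the $(1,1)$ case tractable, but it is \emph{false} for $p\geq 2$: as the paper records in \eqref{hdi1}, the complex $\mathscr{S}_{\PV}^{\bullet}(p,p)\simeq\mathscr{L}_{\PV}^{\bullet}(p,p)$ has two nonzero cohomology sheaves, namely $\mathbb{C}_{\PV}$ in degree $0$ and $\tfrac{\Omega_{\PV}^{p-1}}{\partial\Omega_{\PV}^{p-2}}\oplus\tfrac{\bar\Omega_{\PV}^{p-1}}{\overline{\partial\Omega}_{\PV}^{p-2}}$ in degree $p-1$. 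Hence $R\rho_{\ast}\mathscr{L}_{\PV}^{\bullet}(p,p)$ is not the derived pushforward of a single sheaf, and your long exact sequence \eqref{exactseq-hdimg}-style computation cannot be carried out as stated. Relatedly, your claim that $\mathscr{L}_{Z}^{\bullet}(p-i,p-i)[-2i]$ for $i\geq 1$ ``has the form $\mathcal{A}_{Z}^{\bullet}[1-2i]$'' is also wrong: this degeneration happens only for $i\geq p$, while for $1\leq i\leq p-1$ the summand is still a genuine Bott--Chern-type complex with two cohomology sheaves.

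The paper does not attempt a direct imitation of the $(1,1)$ argument. Instead it runs induction on $p$: assuming the quasi-isomorphism for $\mathscr{L}_{\PV}^{\bullet}(m,m)$, one uses the short exact sequence
\[
0\longrightarrow(\Omega_{\PV}^{m}\oplus\bar\Omega_{\PV}^{m})[-m]\longrightarrow\mathscr{S}_{\PV}^{\bullet}(m+1,m+1)\longrightarrow\mathscr{S}_{\PV}^{\bullet}(m,m)\longrightarrow 0,
\]
pushes forward by $R\rho_{\ast}$, and analyzes the resulting long exact sequence of cohomology-sheaf stalks, feeding in the known projective bundle formula for $\Omega_{\PV}^{m}$. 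The case distinction $p>c$ versus $p\leq c$ is necessary because the shape of the cohomology sheaves \eqref{hdi2} versus \eqref{hdi5} differs, and in the second case an additional argument (via \cite[Lemma 3.2.1]{Ko11} and a K\"unneth computation on a trivializing polydisc) is required to identify $R^{2m}\rho_{\ast}\mathscr{S}_{\PV}^{\bullet}(m+1,m+1)$ with $\mathcal{O}_{Z}+\bar{\mathcal{O}}_{Z}$. Your proposal contains no substitute for this inductive step or for the stalk-level analysis of the connecting maps $\delta_{l}$; the ``extension problem'' you flag in your final paragraph is precisely where all the content lies, and it cannot be resolved without a d\'evissage of this kind.
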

\begin{proof}
For the sake of simplicity, we set
$$
\mathscr{F}^{\bullet}_{Z}(p)=\bigoplus_{i=0}^{c-1} \mathscr{L}_{Z}^{\bullet}(p-i,p-i)[-2i].
$$
From definition, we have
$$
\mathscr{H}^{j}(\mathscr{F}^{\bullet}_{Z}(p))
= \bigoplus_{i=0}^{c-1} \mathscr{H}^{j-2i}(\mathscr{L}_{Z}^{\bullet}(p-i,p-i))
= \bigoplus_{i=0}^{c-1} \mathscr{H}^{j-2i} (\mathscr{S}_{Z}^{\bullet}(p-i,p-i)).
$$
Because of $\mathscr{H}^{j-2i} (\mathscr{S}_{Z}^{\bullet}(p-i,p-i))=0$ for all $0\leq i\leq c-1$ when $j>p+c-2$, we get $\mathscr{H}^{j}(\mathscr{F}^{\bullet}_{Z}(p))=0$ for any $j>p+c-2$.
By definition,
it suffices to verify that the morphism of stalks $$
\mathscr{H}^{j}(\varphi_{x}):[\mathscr{H}^{j}(\mathscr{F}^{\bullet}_{Z}(p))]_{x}\longrightarrow
[\mathscr{H}^{j}(\mathscr{L}^{\bullet}_{\PV}(p,p))]_{x}
$$
is an isomorphism, for any $x\in Z$ and $j\in\mathbb{N}$.
Since the inclusion $\mathscr{S}^{\bullet}_{\PV}(p,p)\hookrightarrow\mathscr{L}^{\bullet}_{\PV}(p,p)$
is a quasi-isomorphism, there exists a canonical isomorphism of the stalks of the cohomology sheaves.
Henceforth, we identify $[\mathscr{H}^{j}(\mathscr{S}^{\bullet}_{\PV}(p,p))]_{x}$ with $[\mathscr{H}^{j}(\mathscr{L}^{\bullet}_{\PV}(p,p))]_{x}$.
For any $k\geq2$, the $j$-th cohomology sheaf of $\mathscr{L}^{\bullet}_{Z}(k,k)$ is:
\begin{equation}\label{hdi1}
\mathscr{H}^{j}(\mathscr{L}^{\bullet}_{Z}(k,k))
=
\begin{cases}
 \mathbb{C}_{Z},    &  j=0; \\
\frac{\Omega_{Z}^{k-1}}{\partial\Omega_{Z}^{k-2}}\oplus \frac{\bar{\Omega}_{Z}^{k-1}}{\overline{\partial\Omega}_{Z}^{k-2}},
& j=k-1; \\
0, & \textrm{otherwise}.
\end{cases}
\end{equation}

We divide the proof into two cases.
\paragraph{\textbf{Case 1}}
Assume that $p>c$, then $p-(c-1)\geq2$.
From \eqref{hdi1}, we get
\begin{equation}\label{hdi2}
\mathscr{H}^{j}(\mathscr{F}^{\bullet}_{Z}(p))
=
\begin{cases}
 \mathbb{C}_{Z},    &  j<p-1,\, j \,\,\textrm{is even}; \\
\mathbb{C}_{Z}\oplus\frac{\Omega_{Z}^{2p-j-2}}{\partial\Omega_{Z}^{2p-j-3}}\oplus \frac{\bar{\Omega}_{Z}^{2p-j-2}}{\overline{\partial\Omega}_{Z}^{2p-j-3}},
& p-1\leq j\leq p+c-2,\,j \,\,\textrm{is even}; \\
\frac{\Omega_{Z}^{2p-j-2}}{\partial\Omega_{Z}^{2p-j-3}}\oplus \frac{\bar{\Omega}_{Z}^{2p-j-2}}{\overline{\partial\Omega}_{Z}^{2p-j-3}},
& p-1\leq j\leq p+c-2,\,j \,\,\textrm{is odd}; \\
0, & \textrm{otherwise}.
\end{cases}
\end{equation}
The proof is carried out by induction.
Let $x$ be an arbitrary point in $Z$.
Assume that the Lemma \ref{(p,p)-projbundle-formula} holds for $p=m$, then we have an isomorphism:
\begin{equation}\label{equm}
\mathscr{H}^{j}(\varphi_{x}):
[\mathscr{H}^{j}(\mathscr{F}^{\bullet}_{Z}(m))]_{x}
\stackrel{\simeq}\longrightarrow
[\mathscr{H}^{j}(R\rho_{\ast}\mathscr{L}_{\PV}^{\bullet}(m,m))]_{x}
=
[R^{j}\rho_{\ast}\mathscr{L}_{\PV}^{\bullet}(m,m)]_{x},
\end{equation}
for any $j\in\mathbb{N}$.
Set $p=m+1$ and consider the short exact sequence:
\begin{equation}\label{projbundle-exact-seq(p,p)}
\xymatrix@C=0.5cm{
0\ar[r]^{} &
\bigl(\Omega_{\PV}^{m}\oplus \bar{\Omega}_{\PV}^{m}\bigr)[-m]
\ar[r]^{} &
\mathscr{S}^{\bullet}_{\PV}(m+1,m+1)
\ar[r]^{} &
\mathscr{S}^{\bullet}_{\PV}(m,m)
\ar[r]^{} & 0}
\end{equation}
in the category of sheave complexes of $\mathbb{C}_{\PV}$-modules.
A direct local computation shows that the following morphism of sheaf complexes is a quasi-isomorphism:
$$
\varphi=\sum_{i=0}^{c-1} h^{i}\wedge \rho^{\star}:
\bigoplus_{i=0}^{c-1} \mathcal{A}_{Z}^{m-i,\bullet}[-i]
\longrightarrow
R\rho_{\ast} \mathcal{A}_{\mathbb{P}}^{m,\bullet} \cong R\rho_{\ast}\Omega_{\mathbb{P}}^{m}
$$
To be more specific, we have:
\begin{equation}\label{omghdi}
[R^{j}\rho_{\ast}(\Omega_{\PV}^{m}\oplus \bar{\Omega}_{\PV}^{m}[-m])]_{x}
=
\begin{cases}
 0,    &  j\leq m-1; \\
\mathscr{H}^{j}(\varphi_{x})\bigl([\Omega_{Z}^{2m-j}\oplus \bar{\Omega}_{Z}^{2m-j}]_{x}\bigr),     & m\leq j\leq m+c-1; \\
0, & \textrm{otherwise},
\end{cases}
\end{equation}
see for example \cite[Remark 9]{CY19} or \cite[Lemma 11]{CY21}.
From the long exact sequence of the higher direct images of \eqref{projbundle-exact-seq(p,p)},
we obtain
\begin{equation*}\label{(p+1,p+1)-dir-im-1}
[R^{j}\rho_{\ast}\mathscr{S}^{\bullet}_{\PV}(m+1,m+1)]_{x}
\cong
[R^{j}\rho_{\ast} \mathscr{S}^{\bullet}_{\PV}(m,m)]_{x}\; \; ( j\leq m-2)
\end{equation*}
and
\begin{equation*}\label{(p+1,p+1)-dir-im-2}
[R^{j}\rho_{\ast}\mathscr{S}^{\bullet}_{\PV}(m+1,m+1)]_{x}
\cong
[R^{j}\rho_{\ast} \mathscr{S}^{\bullet}_{\PV}(m,m)]_{x}\; \; ( j\geq m+c).
\end{equation*}
Moreover, there exists a long exact sequence:
\begin{equation}\label{(p+1,p+1)-long-dir-im}
\vcenter{
\tiny{
\xymatrix@C=0.5cm{
  0 \ar[r] & [R^{m-1}\rho_{\ast}\mathscr{S}^\bullet_{\PV}(m+1,m+1)]_{x}
   \ar[r]^{} & [R^{m-1}\rho_{\ast}\mathscr{S}^\bullet_{\PV}(m,m)]_{x}
    \ar[r]^{\delta_{m-1}} &
    [R^{m}\rho_{\ast}(\Omega_{\PV}^{m}\oplus \bar{\Omega}_{\PV}^{m}[-m])]_{x}\\
  \ar[r]^{} & [R^{m}\rho_{\ast}\mathscr{S}^\bullet_{\PV}(m+1,m+1)]_{x}
   \ar[r]^{} & [R^{m}\rho_{\ast}\mathscr{S}^\bullet_{\PV}(m,m)]_{x}
    \ar[r]^{\delta_{m}\quad\quad} &
    [R^{m+1}\rho_{\ast}(\Omega_{\PV}^{m}\oplus \bar{\Omega}_{\PV}^{m}[-m])]_{x}\\
    \ar[r] & [R^{m+1}\rho_{\ast}\mathscr{S}^\bullet_{\PV}(m+1,m+1)]_{x}
   \ar[r]^{} & [R^{m+1}\rho_{\ast}\mathscr{S}^\bullet_{\PV}(m,m)]_{x}
    \ar[r]^{\delta_{m+1}\quad\quad} &
    [R^{m+2}\rho_{\ast}(\Omega_{\PV}^{m}\oplus \bar{\Omega}_{\PV}^{m}[-m])]_{x}\\
    \cdots \ar[r]&[R^{m+c-2}\rho_{\ast}\mathscr{S}^\bullet_{\PV}(m+1,m+1)]_{x}
   \ar[r]^{} & [R^{m+c-2}\rho_{\ast}\mathscr{S}^\bullet_{\PV}(m,m)]_{x}
    \ar[r]^{\delta_{m+c-2}\quad\quad} &
    [R^{m+c-1}\rho_{\ast}(\Omega_{\PV}^{m}\oplus \bar{\Omega}_{\PV}^{m}[-m])]_{x}\\
    \ar[r]&[R^{m+c-1}\rho_{\ast}\mathscr{S}^\bullet_{\PV}(m+1,m+1)]_{x}
   \ar[r]^{} & [R^{m+c-1}\rho_{\ast}\mathscr{S}^\bullet_{\PV}(m,m)]_{x}
    \ar[r]^{} & 0. }}}
\end{equation}
Note that each coboundary operator in \eqref{(p+1,p+1)-long-dir-im} is induced by $\partial\oplus\bar{\partial}$.
Suppose $m$ is \emph{odd}.
On account of \eqref{hdi2} and \eqref{equm}, for all $0\leq i\leq c$, the operator $\delta_{m-1+i}$ is \emph{injective} when $i$ is odd,
and the kernel of $\delta_{m-1+i}$ is isomorphic to $[\mathbb{C}_{Z}]_{x}$ when $i$ is even.
From \eqref{omghdi} and the exactness of \eqref{(p+1,p+1)-long-dir-im}, we obtain:
\begin{equation}\label{hdi3}
\small{
[R^{m-1+i}\rho_{\ast}\mathscr{L}_{\PV}^{\bullet}(m+1,m+1)]_{x}
\cong
\begin{cases}
 [\mathbb{C}_{Z}]_{x},    &  i=0; \\
 [\frac{\Omega_{Z}^{m-i+1}}{\partial\Omega_{Z}^{m-i}}\oplus \frac{\bar{\Omega}_{Z}^{m-i+1}}{\overline{\partial\Omega}_{Z}^{m-i}}]_{x},
& i>0,\,i \,\,\textrm{is odd}; \\
[\mathbb{C}_{Z}\oplus \frac{\Omega_{Z}^{m-i+1}}{\partial\Omega_{Z}^{m-i}}\oplus \frac{\bar{\Omega}_{Z}^{m-i+1}}{\overline{\partial\Omega}_{Z}^{m-i}}]_{x},
& i>0,\,i \,\,\textrm{is even}.
\end{cases}}
\end{equation}
If $m$ is \emph{even}, by the same argument as above, we get:
\begin{equation}\label{hdi4}
\small{
[R^{m-1+i}\rho_{\ast}\mathscr{L}_{\PV}^{\bullet}(m+1,m+1)]_{x}
\cong
\begin{cases}
 0,    &  i=0; \\
 [\mathbb{C}_{Z}\oplus \frac{\Omega_{Z}^{m-i+1}}{\partial\Omega_{Z}^{m-i}}\oplus \frac{\bar{\Omega}_{Z}^{m-i+1}}{\overline{\partial\Omega}_{Z}^{m-i}}]_{x},
& i>0,\,i \,\,\textrm{is odd}; \\
[\frac{\Omega_{Z}^{m-i+1}}{\partial\Omega_{Z}^{m-i}}\oplus \frac{\bar{\Omega}_{Z}^{m-i+1}}{\overline{\partial\Omega}_{Z}^{m-i}}]_{x},
& i>0,\,i \,\,\textrm{is even}.
\end{cases}}
\end{equation}
Combining \eqref{hdi2} with \eqref{hdi3}-\eqref{hdi4} concludes an isomorphism of stalks:
\begin{equation*}
\mathscr{H}^{j}(\varphi_{x}):
[\mathscr{H}^{j}(\mathscr{F}^{\bullet}_{Z}(m+1))]_{x}
\stackrel{\simeq}\longrightarrow
[\mathscr{H}^{j}(R\rho_{\ast}\mathscr{L}_{\PV}^{\bullet}(m+1,m+1))]_{x},
\end{equation*}
for any $j\in\mathbb{N}$.
This implies that Lemma \ref{(p,p)-projbundle-formula} holds for all $p>c$.

\paragraph{\textbf{Case 2}}
Assume that $p\leq c$, then $p-(c-1)\leq1$.
Note that
\begin{equation}\label{hdi5}
[\mathscr{H}^{j}(\mathscr{F}^{\bullet}_{Z}(p))]_{x}
=
\begin{cases}
 [\mathbb{C}_{Z}]_{x},    &  j<p-1,\, j \,\,\textrm{is even}; \\
[\mathbb{C}_{Z}\oplus\frac{\Omega_{Z}^{2p-j-2}}{\partial\Omega_{Z}^{2p-j-3}}\oplus \frac{\bar{\Omega}_{Z}^{2p-j-2}}{\overline{\partial\Omega}_{Z}^{2p-j-3}}]_{x},
& p-1\leq j\leq2p-3,\,j \,\,\textrm{is even}; \\
[\frac{\Omega_{Z}^{2p-j-2}}{\partial\Omega_{Z}^{2p-j-3}}\oplus \frac{\bar{\Omega}_{Z}^{2p-j-2}}{\overline{\partial\Omega}_{Z}^{2p-j-3}}]_{x},
& p-1\leq j\leq2p-3,\,j \,\,\textrm{is odd}; \\
[\mathcal{O}_{Z}+\bar{\mathcal{O}}_{Z}]_{x}, & j=2p-2;\\
[\mathbb{C}_{Z}]_{x}, & 2p-1\leq j\leq p+c-2,\,j \,\,\textrm{is odd}; \\
0, & \textrm{otherwise}.
\end{cases}
\end{equation}
Suppose the assertion in Lemma \ref{(p,p)-projbundle-formula} is true when $p=m$.
Then we have an isomorphism:
\begin{equation}\label{as-m}
\mathscr{H}^{j}(\varphi_{x}):
[\mathscr{H}^{j}(\mathscr{F}^{\bullet}_{Z}(m))]_{x}
\stackrel{\simeq}\longrightarrow
[\mathscr{H}^{j}(R\rho_{\ast}\mathscr{L}_{\PV}^{\bullet}(m,m))]_{x}
=
[R^{j}\rho_{\ast}\mathscr{L}_{\PV}^{\bullet}(m,m)]_{x},
\end{equation}
If $m+1>c$, then it follows from the result in \textbf{Case 1} that Lemma \ref{(p,p)-projbundle-formula} holds for $m+1$.
So we only need to consider the case of $m+1\leq c$.
Since $R^{j}\rho_{\ast}(\Omega_{\PV}^{m}\oplus \bar{\Omega}_{\PV}^{m}[-m])_{x}=0$ when $j\leq m-2$ or $j\geq2m+1$, we obtain
\begin{equation*}\label{(p+1,p+1)-dir-im-2.1}
[R^{j}\rho_{\ast}\mathscr{S}^{\bullet}_{\PV}(m+1,m+1)]_{x}
\cong
[R^{j}\rho_{\ast} \mathscr{S}^{\bullet}_{\PV}(m,m)]_{x}\; \;
\end{equation*}
for any $j\leq m-2$ or $j\geq 2m+1$.
As a result, there exists a long exact sequence induced by \eqref{projbundle-exact-seq(p,p)}:
\begin{equation}\label{(p+1,p+1)-long-dir-im-2}
\vcenter{
\small{
\xymatrix@C=0.49cm{
0 \ar[r] & [R^{m-1}\rho_{\ast}\mathscr{S}^\bullet_{\PV}(m+1,m+1)]_{x}
   \ar[r]^{} & [R^{m-1}\rho_{\ast}\mathscr{S}^\bullet_{\PV}(m,m)]_{x}
    \ar[r]^{\delta_{m-1}\quad} &
    [R^{m}\rho_{\ast}(\Omega_{\PV}^{m}\oplus \bar{\Omega}_{\PV}^{m}[-m])]_{x}\\
  \cdots \ar[r] & [R^{2m-3}\rho_{\ast}\mathscr{S}^\bullet_{\PV}(m+1,m+1)]_{x}
   \ar[r]^{} & [R^{2m-3}\rho_{\ast}\mathscr{S}^\bullet_{\PV}(m,m)]_{x}
    \ar[r]^{\delta_{2m-3}\,\,\quad} & [R^{2m-2}\rho_{\ast}(\Omega_{\PV}^{m}\oplus \bar{\Omega}_{\PV}^{m}[-m])]_{x} \\
  \ar[r]^{} & [R^{2m-2}\rho_{\ast}\mathscr{S}^\bullet_{\PV}(m+1,m+1)]_{x}
   \ar[r]^{} & [R^{2m-2}\rho_{\ast}\mathscr{S}^\bullet_{\PV}(m,m)]_{x}
    \ar[r]^{\delta_{2m-2}\,\,\quad} & [R^{2m-1}\rho_{\ast}(\Omega_{\PV}^{m}\oplus \bar{\Omega}_{\PV}^{m}[-m])]_{x}\\
    \ar[r] & [R^{2m-1}\rho_{\ast}\mathscr{S}^\bullet_{\PV}(m+1,m+1)]_{x}
   \ar[r]^{} & [R^{2m-1}\rho_{\ast}\mathscr{S}^\bullet_{\PV}(m,m)]_{x}
    \ar[r]^{\delta_{2m-1}\quad} & [R^{2m}\rho_{\ast}(\Omega_{\PV}^{m}\oplus \bar{\Omega}_{\PV}^{m}[-m])]_{x}\\
    \ar[r]&[R^{2m}\rho_{\ast}\mathscr{S}^\bullet_{\PV}(m+1,m+1)]_{x}
   \ar[r]^{} & [R^{2m}\rho_{\ast}\mathscr{S}^\bullet_{\PV}(m,m)]_{x}
    \ar[r]^{} & 0.}}}
\end{equation}

We claim that
$[\mathscr{H}^{2m}(\mathscr{F}^{\bullet}_{Z}(m+1))]_{x}=[\mathcal{O}_{Z}+\bar{\mathcal{O}}_{Z}]_{x}$
is isomorphic to
$[R^{2m}\rho_{\ast}\mathscr{S}^\bullet_{\PV}(m+1,m+1)]_{x}$
under the morphism $\mathscr{H}^{2m}(\varphi_{x})$.
Let $\star\in\{\partial,\bar{\partial},\partial\bar{\partial}\}$, and $\mathscr{Z}^{m,m}_{\PV,\star}$ the sheaf of $\star$-closed forms of bidegree $(m,m)$.
According to \cite[Lemma 3.2.1]{Ko11}, the sheaf
$\mathscr{Z}^{m,m}_{\PV,\partial}+\mathscr{Z}^{m,m}_{\PV,\bar{\partial}}$
is isomorphic to
$\mathscr{Z}^{m,m}_{\PV,\partial\bar{\partial}}$
under the inclusion morphism
\begin{equation}\label{m-closhf}
\mathscr{Z}^{m,m}_{\PV,\partial}+\mathscr{Z}^{m,m}_{\PV,\bar{\partial}}
\hookrightarrow
\mathscr{Z}^{m,m}_{\PV,\partial\bar{\partial}}
\end{equation}
and hence, for any $x\in Z$, the stalk of the higher direct image at $x$ is:
\begin{eqnarray*}
  [R^{2m}\rho_{\ast}\mathscr{S}^\bullet_{\PV}(m+1,m+1)]_{x}
  &=& \biggl[\rho_{\ast}\bigl(\frac{\mathscr{Z}^{m,m}_{\PV,\partial\bar{\partial}}}
{\partial\mathcal{A}^{m-1,m}_{\PV}+
\bar{\partial}\mathcal{A}^{m,m-1}_{\PV}}\bigr)\biggr]_{x} \\
  &=& \biggl[\rho_{\ast}\bigl( \frac{\mathscr{Z}^{m,m}_{\PV,\partial}+
  \mathscr{Z}^{m,m}_{\PV,\bar{\partial}}}
{\partial\mathcal{A}^{m-1,m}_{\PV}+
\bar{\partial}\mathcal{A}^{m,m-1}_{\PV}}\bigr)\biggr]_{x}.
\end{eqnarray*}
Observe that there is a natural sheaf morphism
$$
h^{m}\wedge\rho^{\star}:
\mathcal{O}_{Z}+ \bar{\mathcal{O}}_{Z}
\longrightarrow\rho_{\ast}\mathscr{Z}^{m,m}_{\PV,\partial\bar{\partial}}
$$
which induces the morphism $\mathscr{H}^{2m}(\varphi)$.
Let $W$ be a small polydisc neighbourhood around $x$ such that $\PV|_{W}$ admits a canonical trivialization $\rho^{-1}(W)\cong W\times\mathbb{CP}^{c-1}$, and $W$ is $\partial$ and $\bar{\partial}$-acyclic,
i.e., has the property
$$
H^{s,t}_{\partial}(W)=0,\,\,\, H^{s,t}_{\bar{\partial}}(W)=0 \,\,\,\,\textrm{for } t>0.
$$
Then we have
\begin{eqnarray}\label{r2m}
  R^{2m}\rho_{\ast}\mathscr{S}^\bullet_{\PV}(m+1,m+1)(W)
  &=&  \frac{\mathscr{Z}^{m,m}_{\PV,\partial}(\rho^{-1}(W))+
  \mathscr{Z}^{m,m}_{\PV,\bar{\partial}}(\rho^{-1}(W))}
{\partial\mathcal{A}^{m-1,m}_{\PV}(\rho^{-1}(W))+
\bar{\partial}\mathcal{A}^{m,m-1}_{\PV}(\rho^{-1}(W))}.\nonumber
\end{eqnarray}
Notice that
$$
H^{m,m}_{\bar{\partial}}(W\times\mathbb{CP}^{c-1})\cong
H^{m,m}_{\bar{\partial}}(\rho^{-1}(W))=
\frac{\mathscr{Z}^{m,m}_{\PV,\bar{\partial}}(\rho^{-1}(W))}
{\bar{\partial}\mathcal{A}^{m,m-1}_{\PV}(\rho^{-1}(W))}.
$$
As $W$ is acyclic for Dolbeault cohomology, it follows from the Dolbeault K\"{u}nneth formula there exists an isomorphism:
$$
\frac{\mathscr{Z}^{m,m}_{\PV,\bar{\partial}}(\rho^{-1}(W))}
{\bar{\partial}\mathcal{A}^{m,m-1}_{\PV}(\rho^{-1}(W))}\cong
h^{m}\wedge\rho^{\star}H^{0,0}_{\bar{\partial}}(W)
$$
and therefore we get
\begin{equation}\label{bar-clo-shf}
\mathscr{Z}^{m,m}_{\PV,\bar{\partial}}(\rho^{-1}(W))
=
h^{m}\wedge\rho^{\star}H^{0,0}_{\bar{\partial}}(W)+
\bar{\partial}\mathcal{A}^{m,m-1}_{\PV}(\rho^{-1}(W)).
\end{equation}
Likewise, using the K\"{u}nneth formula for $\partial$-cohomology, we can prove the following:
\begin{equation}\label{par-clo-shf}
\mathscr{Z}^{m,m}_{\PV,\partial}(\rho^{-1}(W))
=
h^{m}\wedge\rho^{\star}H^{0,0}_{\partial}(W)+
\partial\mathcal{A}^{m,m-1}_{\PV}(\rho^{-1}(W)).
\end{equation}
Combining \eqref{bar-clo-shf} with \eqref{par-clo-shf} derives:
\begin{eqnarray*}
  R^{2m}\rho_{\ast}\mathscr{S}^\bullet_{\PV}(m+1,m+1)(W)
  &=& h^{m}\wedge\rho^{\star}(H^{0,0}_{\partial}(W)+H^{0,0}_{\bar{\partial}}(W)) \\
  &\cong& (\mathcal{O}_{Z}+\bar{\mathcal{O}}_{Z})(W).
\end{eqnarray*}
This implies that the morphism of the stalks
$$
\mathscr{H}^{2m}(\varphi_{x}):[\mathcal{O}_{Z}+\bar{\mathcal{O}}_{Z}]_{x}
\longrightarrow
[R^{2m}\rho_{\ast}\mathscr{S}^\bullet_{\PV}(m+1,m+1)]_{x}
$$
is an isomorphism.

Due to \eqref{omghdi}, \eqref{hdi5}, and\eqref{as-m},  the long exact sequence \eqref{(p+1,p+1)-long-dir-im-2} equals the following one:
\begin{equation}\label{(p+1,p+1)-long-dir-im-3}
\vcenter{
\small{
\xymatrix@=0.31cm{
0 \ar[r] & [R^{m-1}\rho_{\ast}\mathscr{S}^\bullet_{\PV}(m+1,m+1)]_{x}
   \ar[r]^{} & [R^{m-1}\rho_{\ast}\mathscr{S}^\bullet_{\PV}(m,m)]_{x}
    \ar[r]^{\delta_{m-1}\quad} &
    [R^{m}\rho_{\ast}(\Omega_{\PV}^{m}\oplus \bar{\Omega}_{\PV}^{m}[-m])]_{x}\\
  \cdots \ar[r] & [R^{2m-3}\rho_{\ast}\mathscr{S}^\bullet_{\PV}(m+1,m+1)]_{x}
   \ar[r]^{} & \mathscr{H}^{2m-3}(\varphi_{x})\bigl([\frac{\Omega^{1}_{Z}}{\partial\mathcal{O}_{Z}}\oplus
    \frac{\bar{\Omega}^{1}_{Z}}{\overline{\partial\mathcal{O}}_{Z}}]_{x}\bigr)
    \ar[r]^{\delta_{2m-3}} &
    \mathscr{H}^{2m-2}(\varphi_{x})\bigl([\Omega_{Z}^{2}\oplus \bar{\Omega}_{Z}^{2}]_{x}\bigr) \\
  \ar[r]^{} & [R^{2m-2}\rho_{\ast}\mathscr{S}^\bullet_{\PV}(m+1,m+1)]_{x}
   \ar[r]^{} &
   \mathscr{H}^{2m-2}(\varphi_{x})\bigl([\mathcal{O}_{Z}+\bar{\mathcal{O}}_{Z}]_{x}\bigr)
    \ar[r]^{\delta_{2m-2}} &
    \mathscr{H}^{2m-1}(\varphi_{x})\bigl([\Omega_{Z}^{1}\oplus \bar{\Omega}_{Z}^{1}]_{x}\bigr) \\
    \ar[r] & [R^{2m-1}\rho_{\ast}\mathscr{S}^\bullet_{\PV}(m+1,m+1)]_{x}
   \ar[r]^{} &
   \mathscr{H}^{2m-1}(\varphi_{x})\bigl([\mathbb{C}_{Z}]_{x}\bigr)
    \ar[r]^{\delta_{2m-1}} &
    \mathscr{H}^{2m}(\varphi_{x})\bigl([\mathcal{O}_{Z}\oplus \bar{\mathcal{O}}_{Z}]_{x}\bigr)\\
    \ar[r]&
    \mathscr{H}^{2m}(\varphi_{x})\bigl([\mathcal{O}_{Z}+\bar{\mathcal{O}}_{Z}]_{x}\bigr) 
   \ar[r]^{} & 0
    \ar[r]^{} & 0.}}}
\end{equation}
Note that the coboundary operator $\delta_{2m-1}=(+,-)$ is injective and for each $m-1\leq l\leq 2m-2$ the coboundary operator $\delta_{l}$ is induced by $\partial\oplus\bar{\partial}$.
Especially, the kernel of $\delta_{l}$ is isomorphic to $[\mathbb{C}_{Z}]_{x}$ when $l$ is even, and $\delta_{l}$ is injective when $l$ is odd.
By the exactness of \eqref{(p+1,p+1)-long-dir-im-3}, we get:
\begin{equation}\label{hdi99}
[R^{j}\rho_{\ast}\mathscr{S}_{\mathbb{P}}^{\bullet}(m+1,m+1)]_{x}
\cong
\begin{cases}
0,    &  j=m-1,\, j \,\,\textrm{is odd}; \\
 [\mathbb{C}_{Z}]_{x},    &  j=m-1,\, j \,\,\textrm{is even}; \\
 [\frac{\Omega_{Z}^{2m-j}}{\partial\Omega_{Z}^{2m-j-1}}\oplus \frac{\bar{\Omega}_{Z}^{2m-j}}{\overline{\partial\Omega}_{Z}^{2m-j-1}}]_{x},
& m\leq j\leq2m-1,\,j \,\,\textrm{is odd}; \\
[\mathbb{C}_{Z}\oplus\frac{\Omega_{Z}^{2m-j}}{\partial\Omega_{Z}^{2m-j-1}}\oplus \frac{\bar{\Omega}_{Z}^{2m-j}}{\overline{\partial\Omega}_{Z}^{2m-j-1}}]_{x},
& m\leq j<2m-1,\,j \,\,\textrm{is even}; \\
[\mathcal{O}_{Z}+\bar{\mathcal{O}}_{Z}]_{x}, & j=2m.
\end{cases}
\end{equation}
Comparing \eqref{hdi5} with \eqref{hdi99} derives the isomorphism:
\begin{equation*}
\mathscr{H}^{j}(\varphi_{x}):
[\mathscr{H}^{j}(\mathscr{F}^{\bullet}_{Z}(m+1))]_{x}
\stackrel{\simeq}\longrightarrow
[\mathscr{H}^{j}(R\rho_{\ast}\mathscr{L}_{\PV}^{\bullet}(m+1,m+1))]_{x},
\end{equation*}
for any $j\in\mathbb{N}$, and this completes the proof.
\end{proof}

Now we are in the position to prove Proposition \ref{(p,q)-projbundle-formula}.

\begin{proof}[Proof of Proposition \ref{(p,q)-projbundle-formula}]
The idea of the proof is same as the proof of Lemma \ref{(p,p)-projbundle-formula}.
Based on Lemma \ref{(p,p)-projbundle-formula},
we fix the first degree $p$ and apply the inductive method to $q$ with $q\geq p$.
We assume that the proposition holds for $\mathscr{L}_{\PV}^{\bullet}(p,m)$ such that $m\geq p$.
The remaining thing in the proof is to show that the proposition holds for $\mathscr{L}_{\PV}^{\bullet}(p,m+1)$.
For this, we consider the short exact sequence
\begin{equation*}
\xymatrix@C=0.5cm{
0\ar[r]^{}&
\bar{\Omega}_{\PV}^{m}[-m]
\ar[r]^{} &
\mathscr{S}^{\bullet}_{\PV}(p,m+1)
\ar[r]^{} &
\mathscr{S}^{\bullet}_{\PV}(p,m)
\ar[r]^{} & 0}
\end{equation*}
in the category of complexes of $\mathbb{C}_{\PV}$-modules.
For any $x\in Z$, similar to \eqref{omghdi}, we have
$$
[R^{j}\rho_{\ast}(\bar{\Omega}_{\PV}^{m}[-m])]_{x}
=
\begin{cases}
 0,    &  j\leq p-1, \\
\mathscr{H}^{j}(\varphi_{x})\bigl([\bar{\Omega}_{Z}^{2m-j}]_{x}\bigr), & m\leq j\leq m+c-1;    \\
0, & \text{otherwise}.
\end{cases}
$$
Then the rest of the proof go through by using the same argument in the proof of Lemma  \ref{(p,p)-projbundle-formula}.
\end{proof}

To conclude this appendix, we establish the projective bundle formula for the truncated holomorphic de Rham complex.
\begin{prop}\label{dirim-truncated-deRham}
There exists a canonical isomorphism:
$$
\bigoplus_{i=0}^{c-1} \mathbb{H}^{k-2i}(Z,\Omega_{Z}^{\bullet <p-i})
\cong
\mathbb{H}^{k}(Z,R\rho_{\ast}\Omega_{\PV}^{\bullet <p}),
$$
for any $k\geq0$.
\end{prop}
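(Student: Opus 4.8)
The plan is to follow verbatim the strategy of Lemma \ref{(p,p)-projbundle-formula} and Proposition \ref{(p,q)-projbundle-formula}: the statement is the truncated-holomorphic-de-Rham analogue of Leray--Hirsch, and I would prove it by first establishing a canonical quasi-isomorphism of sheaf complexes on $Z$
$$
\varphi=\sum_{i=0}^{c-1}h^{i}\wedge\rho^{\star}:
\bigoplus_{i=0}^{c-1}\Omega_{Z}^{\bullet<p-i}[-2i]
\stackrel{\sim}\longrightarrow
R\rho_{\ast}\Omega_{\PV}^{\bullet<p},
$$
with the convention that $\Omega_{Z}^{\bullet<p-i}=0$ once $p-i\leq0$, and then applying $\mathbb{H}^{k}(Z,-)$: since $\mathbb{H}^{k-2i}(Z,\Omega_{Z}^{\bullet<p-i})=\mathbb{H}^{k}(Z,\Omega_{Z}^{\bullet<p-i}[-2i])$ and hypercohomology commutes with finite direct sums, the stated isomorphism drops out immediately (and of course $\mathbb{H}^{k}(Z,R\rho_{\ast}\Omega_{\PV}^{\bullet<p})=\mathbb{H}^{k}(\PV,\Omega_{\PV}^{\bullet<p})$ by Leray). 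As in Appendix \ref{appB}, to make sense of $\varphi$ on the level of complexes I would resolve $\Omega_{\PV}^{\bullet<p}$ by the truncated Dolbeault double complex $\bigoplus_{s<p}\mathcal{A}_{\PV}^{s,\bullet}$, whose terms are fine, so that $R\rho_{\ast}\Omega_{\PV}^{\bullet<p}\cong\rho_{\ast}\bigl(\bigoplus_{s<p}\mathcal{A}_{\PV}^{s,\bullet}\bigr)$, and similarly on $Z$; choosing $h=c_{1}(\mathcal{O}_{\PV}(1))$ to be represented by a closed $(1,1)$-form, hence $\partial$- and $\bar{\partial}$-closed, makes $h^{i}\wedge\rho^{\star}$ an honest morphism of these resolutions.

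To prove that $\varphi$ is a quasi-isomorphism it suffices to check it induces an isomorphism on every cohomology sheaf, which is local on $Z$, and I would argue by induction on $p$. For $p=1$ one has $\Omega_{\PV}^{\bullet<1}=\mathcal{O}_{\PV}$, and since $\rho_{\ast}\mathcal{O}_{\PV}\cong\mathcal{O}_{Z}$ and $R^{t}\rho_{\ast}\mathcal{O}_{\PV}=0$ for $t\geq1$, the assertion reduces to $\rho^{\star}:\mathcal{O}_{Z}\stackrel{\sim}\longrightarrow R\rho_{\ast}\mathcal{O}_{\PV}$, all other summands being zero. For the inductive step I would use the evident short exact sequence of sheaf complexes
$$
0\longrightarrow\Omega_{\PV}^{p-1}[-(p-1)]\longrightarrow\Omega_{\PV}^{\bullet<p}\longrightarrow\Omega_{\PV}^{\bullet<p-1}\longrightarrow 0
$$
that splits off the top term, together with the classical relative Dolbeault projective bundle formula $R^{t}\rho_{\ast}\Omega_{\PV}^{s}\cong\Omega_{Z}^{s-t}$ for $0\leq t\leq c-1$, which is already invoked around \eqref{omghdi} (cf. \cite[Remark 9]{CY19}, \cite[Lemma 11]{CY21}) and gives $R\rho_{\ast}\bigl(\Omega_{\PV}^{p-1}[-(p-1)]\bigr)\cong\bigoplus_{i=0}^{c-1}\Omega_{Z}^{p-1-i}[-(p-1)-i]$. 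Applying $R\rho_{\ast}$ and comparing, through $\varphi$, with the direct sum over $i$ of the analogous distinguished triangles $\Omega_{Z}^{p-1-i}[-(p-1-i)]\to\Omega_{Z}^{\bullet<p-i}\to\Omega_{Z}^{\bullet<p-1-i}\to\Omega_{Z}^{p-1-i}[-(p-2-i)]$ on $Z$, each shifted by $[-2i]$, the inductive hypothesis identifies the two outer terms, and the standard Two-of-Three property in the derived category (as used in Lemma \ref{Derived-dirim-kershcom}) yields the quasi-isomorphism for $\Omega_{\PV}^{\bullet<p}$.

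The one point needing genuine attention — and what I would regard as the main step — is to verify that the connecting morphism of the pushed-forward triangle agrees, summand by summand, with the one above: on cohomology sheaves it must be induced by $\partial_{Z}$ compatibly with $\rho^{\star}$, with the off-diagonal components (and any component into $R^{t}\rho_{\ast}\Omega_{\PV}^{p-1}$ with $t\geq1$) vanishing. Since $\varphi$ has been set up as a genuine chain map and $h$ is $\partial$-closed, one has $\partial\bigl(h^{i}\wedge\rho^{\star}\alpha\bigr)=h^{i}\wedge\rho^{\star}(\partial_{Z}\alpha)$, so the connecting morphism — which on the resolution level is simply $\partial$ into the top term — respects the $h^{i}$-grading and restricts to $\partial_{Z}$ on each summand; this is the same bookkeeping as in the proof of Lemma \ref{(p,p)-projbundle-formula}, but strictly simpler, as there is no anti-holomorphic part and no $\partial\bar{\partial}$-operator to track. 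Alternatively one can bypass the induction, computing both sides over a trivialising polydisc $W\subset Z$ with $\PV|_{W}\cong W\times\PV^{c-1}$, where the decomposition follows from the Dolbeault K\"{u}nneth formula for $W\times\PV^{c-1}$. In either approach, taking $\mathbb{H}^{k}(Z,-)$ of $\varphi$ produces the desired isomorphism for every $k\geq0$.
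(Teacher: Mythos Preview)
Your approach is correct, but it differs from the paper's own argument. You follow the inductive template of Lemma \ref{(p,p)-projbundle-formula}: peel off the top sheaf $\Omega_{\PV}^{p-1}[-(p-1)]$, invoke the single-degree projective bundle formula $R\rho_{\ast}\Omega_{\PV}^{p-1}\cong\bigoplus_i\Omega_Z^{p-1-i}[-i]$, and conclude via Two-of-Three on the resulting morphism of triangles. This works, and your care with the compatibility of connecting morphisms (using that $h$ is $\partial$-closed so that $\partial$ respects the $h^i$-grading) is exactly what is needed to justify applying Two-of-Three. The paper, by contrast, bypasses the induction entirely: after passing to the truncated Dolbeault double complexes on both sides, it observes that $\varphi$ restricts on each column $\mathcal{A}^{r,\bullet}$ to the ordinary Dolbeault projective bundle quasi-isomorphism (the same \cite[Lemma 11]{CY21} you cite), hence induces an isomorphism on the $E_1$-page of the column-filtration spectral sequence, and therefore on the totalizations. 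The paper's spectral-sequence shortcut is shorter and avoids having to track connecting maps; your inductive route is closer in spirit to the Bott--Chern case treated in Appendix \ref{appB}, where no such column-by-column quasi-isomorphism is available and the induction is genuinely forced.
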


\begin{proof}
According to the Dolbeault resolution, the sheaf
$\Omega_{Z}^{s}$ (resp. $\Omega_{\PV}^{s}$) is quasi-isomorphic to the sheaf complex $(\mathcal{A}_{Z}^{s, \bullet},\bar{\partial})$ (resp. $(\mathcal{A}_{\PV}^{s, \bullet},\bar{\partial})$ ) and therefore we get two canonical quasi-isomorphisms:
$$
\bigoplus_{i=0}^{c-1} \Omega_{Z}^{\bullet <p-i}[-2i]\simeq
\mathrm{Tot}\bigl(\bigoplus_{i=0}^{c-1}\mathcal{A}_{Z}^{\bullet <p-i, \bullet}[-i, -i], \partial, \bar{\partial}\bigr)
$$
and $R\rho_{\ast}\Omega_{\PV}^{\bullet <p}\simeq\mathrm{Tot}(\rho_{\ast}\mathcal{A}_{\PV}^{\bullet <p, \bullet},\partial, \bar{\partial})$.
There is a natural morphism of double complexes
\begin{equation}\label{truncated-morph-double}
\varphi=\sum_{i=0}^{c-1} h^{i}\wedge\rho^{\star}:
\bigl(\bigoplus_{i=0}^{c-1}\mathcal{A}_{Z}^{\bullet <p-i, \bullet}[-i, -i], \partial, \bar{\partial}\bigr)
\longrightarrow
\bigl(\rho_{\ast}\mathcal{A}_{\PV}^{\bullet <p, \bullet},\partial, \bar{\partial}\bigr).
\end{equation}
Moreover, by \cite[Lemma 11]{CY21}, for any $r\geq0$ the map
$$
\varphi=\sum_{i=0}^{c-1} h^{i}\wedge\rho^{\star}:
\bigl(\bigoplus_{i=0}^{c-1}\mathcal{A}_{Z}^{r, \bullet}[-i, -i], \bar{\partial}\bigr)
\longrightarrow
\bigl(\rho_{\ast}\mathcal{A}_{\PV}^{r, \bullet}, \bar{\partial}\bigr),
$$
is a quasi-isomorphism.
This means that the morphism \eqref{truncated-morph-double} induces an $E_{1}$-isomorphism of the associated spectral sequences and hence the $E_{\infty}$-isomorphism which shows that
$$
\varphi:\mathrm{Tot}\bigl(\bigoplus_{i=0}^{c-1}\mathcal{A}_{Z}^{\bullet <p-i, \bullet}[-i, -i], \partial, \bar{\partial}\bigr)\longrightarrow
\mathrm{Tot}(\rho_{\ast}\mathcal{A}_{\PV}^{\bullet <p, \bullet},\partial, \bar{\partial})
$$
is a quasi-isomorphism.
Consequently, we are led to the conclusion that the assertion holds.
\end{proof}

\begin{rem}\label{pro-D}
We denote by $\mathscr{D}_{\PV}^{\bullet}(p,q):=\Omega_{\PV}^{\bullet<p}\oplus \bar{\Omega}_{\PV}^{\bullet<q}$.
Then we have
$$
R\rho_{\ast}\mathscr{D}_{\PV}^{\bullet}(p,q)
\cong
R\rho_{\ast}\Omega_{\PV}^{\bullet<p}\oplus
R\rho_{\ast}\bar{\Omega}_{\PV}^{\bullet<q}.
$$
Using the same argument in the proof of Proposition \ref{dirim-truncated-deRham}, we can show that there exists a canonical isomorphism:
$$
\bigoplus_{i=0}^{c-1} \mathbb{H}^{k-2i}(Z,\mathscr{D}_{Z}^{\bullet}(p-i,q-i))
\cong
\mathbb{H}^{k-2i}(Z,R\rho_{\ast}\mathscr{D}_{\PV}^{\bullet}(p,q))
$$
for all $k\geq0$.
\end{rem}


\end{document}